\theoremstyle{plain}
\newtheorem{lemma}{Lemma}
\newtheorem{theorem}{Theorem}
\newtheorem{corollary}{Corollary}
\newtheorem{remark}{Remark}
\newcommand{\crn}{\operatorname{cr}}
\newcommand{\spn}{\operatorname{spn}}
\newcommand{\Int}{{\mbox{Int\ }}}
\newcommand{\hh}{\operatorname{h}}
\newcommand{\fkd}{FKD}
\def\figuresPointSize{0.2}  
\def\figuresSmallPointSize{0.15}  
\def\figuresLineThickness{1.0}  
\def\figuresThinLineThickness{0.75}  
\def\figuresThickLineThickness{1.75}  
\begin{document}



\title{A relation between the crossing number and the height of a knotoid}

\author{
Philipp Korablev\footnote{Chelyabinsk State University, Chelyabinsk, Russia,
Krasovskii Institute of Mathematics and Mechanics, Ural Branch of the Russian Academy of Sciences, Yekaterinburg, Russia
korablev@csu.ru} \and Vladimir Tarkaev\footnote{Chelyabinsk State University, Chelyabinsk, Russia,
Krasovskii Institute of Mathematics and Mechanics, Ural Branch of the Russian Academy of Sciences, Yekaterinburg, Russia,
St. Petersburg State University, Saint Petersburg, Russia
trk@csu.ru}
}

\maketitle

\begin{abstract}
Knotoids are open ended knot diagrams regarded up to 
Reidemeister moves and isotopies.
The notion is introduced by V.~Turaev in 2012.
Two most important numeric characteristics of a knotoid are the crossing number and the height.
The latter is the least number of intersections between
a diagram and an arc connecting its endpoints,
where the minimum is taken over all representative diagrams and all 
such an arcs disjoint from crossings.
In the paper we answer the question: are there any relations between the crossing number and the height of a knotoid.
We prove that
the crossing number of a knotoid is greater than or equal to twice the height of the knotoid.
Combining the inequality with known lower bounds of the height
we obtain a lower bounds of the crossing number of a knotoid
via the extended bracket polynomial, the affine index polynomial
and the arrow polynomial of the knotoid.
As an application of our result we prove an upper bound for the length of a bridge
in a minimal diagram of a classical knot:
the number of crossings in a minimal  diagram of a knot is greater than or equal to
three times the length of a longest bridge in the diagram.
\end{abstract}



\section{Introduction}
\label{sec:Introduction}

The concept of knotoid is introduced by V.~Turaev~\cite{TuraevKnotoids}.
Then the subject was investigated by a few groups of researchers,
mainly by L.~Kauffman and his collaborators.
For a survey of existing works in the area including an application to biology
see~\cite{GKL}.
 For comprehensive tables of knotoids
see~\cite{Bartholomew}, \cite{GDS} and~\cite{KorablevMayTarkaev}.

Intuitively, knotoids can be considered as open-ended knot-type pictures up to an appropriate equivalence.
More precisely, knotoid diagrams are generic immersions of the unit interval into a surface,
together with the under/over-crossing information at double points.
Knotoids are defined as the equivalence classes of knotoid diagrams under isotopies and the Reidemeister moves
(precise definitions are given in Section~\ref{sec:Preliminaries}).
In~\cite{TuraevKnotoids} Turaev shows
that knotoids in $S^2$ generalize knots in $S^3$
and that knotoids are closely related to knots in thickened surfaces via the closure operation
(about injectivity and surjectivity of the closure map in the case of knotoids in $S^2$ see~\cite{KorablevMay}).
Later  in~\cite{KauffmanInvariants}
Kauffman and Gugumcu introduced and studied virtual knotoids
which generalize classical knotoids likewise virtual knots generalize classical knots.

One of the most important characteristic of a knotoid is the crossing number
which is a direct analogue of that for knots.
The problem of determining  the exact value of the crossing number of a knotoid is very complicated.
Any diagram gives an upper bound of the value but not many lower bounds are known.
We mention proved by Turaev in~\cite{TuraevKnotoids}
a generalization to knotoids Kauffman's inequality
relating the span of the bracket polynomial to the crossing number.

One more important numeric characteristic of a knotoid (in $S^2$ only) is the height.
The notion has no direct analogue in classical theory.
It is introduced by Turaev in~\cite{TuraevKnotoids}
under the name of ``the complexity of a knotoid''.
To define the value
consider an arc connecting the endpoints of a diagram of a knotoid.
In general the arc intersects with the diagram.
The height is the minimum of the number of the intersections over 
all representative diagrams
and all such an arcs disjoint from crossings
(see Section~\ref{sec:Preliminaries} for precise definition).
Turaev in~\cite{TuraevKnotoids}
obtained a lower bound for the height of a knotoid via the extended bracket polynomial
which is a purely knotoid generalization of the Kauffman bracket polynomial
(see also~\cite{Kutluay}
where corresponding Khovanov-type invariant is constructed).
In~\cite{KauffmanInvariants}
some known polynomial invariants of virtual knots
are extended to the case of classical and virtual knotoids,
and, in particular, it is shown that these invariants give a lower bounds for the height of a knotoid.

The main goal of our paper is to relate the crossing number to the height of the same knotoid.
Both these values are minima over all representative diagrams
but in general they can be reached at two different representatives.
Theorem~\ref{theorem:MainResult} 
(Section~\ref{sec:MainResult})
states that $\crn(K) \geq 2 \hh(K)$ where $\crn(K)$ and $\hh(K)$
denote the crossing number and the height of a knotoid $K$, respectively.
Combining the inequality with mentioned above lower bounds of the height of a knotoid
we obtain a lower bounds of its crossing number
via the extended bracket polynomial
(Section~\ref{sec:ExtendedBracketPolynomial}),
and via the affine index polynomial and the arrow polynomial
(Section~\ref{sec:ViaKauffman}).

One more application of our result is a necessary condition
for a diagram of a classical knot  to be a minimal in the sense of the number of crossings
(Section~\ref{sec:Bridge}):
if a diagram $D$ of a classical knot is minimal then
$\crn(D) \geq 3k$ where $\crn(D)$ and $k$ denote respectively the number of crossing in the diagram $D$
and the length of a longest bridge in $D$.
The statement shows that approaches coming from the knotoid theory
can be useful to the theory of classical knots.
As another such example we can mention~\cite{SeifertGenus}
where the authors use introduced in~\cite{TuraevKnotoids}
 the notion of Seifert surface of a knotoid
to define an procedure
which under some conditions gives a better estimate for the Seifert genus of a knot
than the one obtaining directly by given diagram.

The paper is organized as follows.
Section~\ref{sec:Preliminaries}
gives main definitions and using notation.
In Section~\ref{sec:MainResult}
we formulate the main result and three its corollaries.
Section~\ref{sec:ProofOfMainResult}
contains a proof of the main result
which is divided  into several auxiliary statements.
The last Section~\ref{sec:ProofOfCorollary}
gives a proof of Corollary~\ref{corol:Bridge}
formulated in Section~\ref{sec:Bridge}.
Two other Corollaries~\ref{corol:spn_u}
and~\ref{corol:Affine}
do not need in   a proof because they are
the direct consequences of our Theorem~\ref{theorem:MainResult}
and theorems formulated
in Section~\ref{sec:ExtendedBracketPolynomial}
and~\ref{sec:ViaKauffman}, respectively.

\section{Preliminaries}
\label{sec:Preliminaries}

A \emph{knotoid diagram} $D$ in $2$-sphere $S^2$
  is a generic immersion of the (closed) segment $[0,1]$ into $S^2$
whose only singularities are transversal
double points endowed with standard over/under-crossing data.
The images of $0$ and~$1$ under
this    immersion are called the    \emph{beginning} and the    \emph{end} of $D$, respectively.
   These two points are   distinct from each  other and from the double points; they are 
  called the \emph{endpoints} of~$D$.
The double points  of $D$ are    called the \emph{crossings} of $D$.
  
Turaev in~\cite{TuraevKnotoids} 
considers knotoid diagrams in $\mathbb{R}^2$
and in an orientable surface of arbitrary  genus with (maybe) non-empty boundary.
In the paper we restrict ourselves with knotoid diagrams in $S^2$ only
and throughout saying about knotoid diagrams we mean knotoid diagrams in $S^2$.

Knotoid diagrams $D_1$ and $D_2$ are \emph{(ambient) isotopic}
if there is an isotopy of $S^2$ in itself transforming $D_1$ in $D_2$.
In particular, an isotopy of a knotoid diagram   may displace the endpoints.

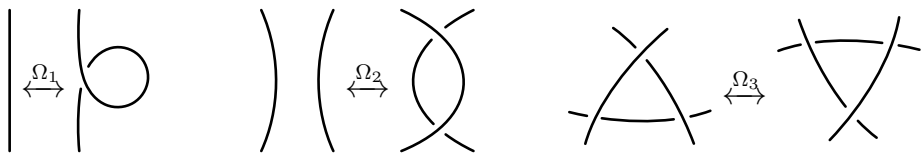
\begin{figure}[h]
	\begin{center}
		\begin{minipage}{0.25\textwidth}
		\begin{center}
			\begin{tikzpicture}[scale=0.35, yscale=-1.000000, baseline=-8.3ex]
				\path[draw=black, line cap=round, line width=\figuresLineThickness] (1.7325,1.0068) -- (1.7325,6.3635);
			\end{tikzpicture}
				$\xleftrightarrow{\Omega_1}$
				\begin{tikzpicture}[scale=0.35, yscale=-1.000000, baseline=-8.3ex]
				\path[shift={(8.21854,0)},draw=black,line cap=round, line width=\figuresLineThickness] (1.7325,6.3635) .. controls (1.6743,5.7347) and (1.6743,5.1006) .. (1.7325,4.4719) .. controls (1.7471,4.3139) and (1.7658,4.1552) .. (1.7934,3.9985);
				\path[shift={(8.21854,0)},draw=black,line cap=round, line width=\figuresLineThickness] (2.0793,3.1527) .. controls (2.1704,2.9937) and (2.2846,2.8476) .. (2.4227,2.7274) .. controls (2.5840,2.5870) and (2.7785,2.4827) .. (2.9877,2.4388) .. controls (3.3353,2.3659) and (3.7151,2.4706) .. (3.9772,2.7102) .. controls (4.2393,2.9498) and (4.3776,3.3179) .. (4.3395,3.6710) .. controls (4.3015,4.0241) and (4.0887,4.3537) .. (3.7833,4.5350) .. controls (3.4780,4.7164) and (3.0874,4.7456) .. (2.7579,4.6133) .. controls (2.5771,4.5407) and (2.4160,4.4229) .. (2.2841,4.2796) .. controls (2.1522,4.1363) and (2.0490,3.9679) .. (1.9713,3.7893) .. controls (1.8159,3.4321) and (1.7627,3.0393) .. (1.7325,2.6509) .. controls (1.6900,2.1040) and (1.6900,1.5538) .. (1.7325,1.0068);
			\end{tikzpicture}
		\end{center}
		\end{minipage}
		\begin{minipage}{0.3\textwidth}
		\begin{center}
			\begin{tikzpicture}[scale=0.35, yscale=-1.000000, baseline=-8.3ex]
				\path[draw=black, line cap=round, line width=\figuresLineThickness] (8.5742,6.3635) .. controls (9.2579,4.8696) and (9.3874,2.8454) .. (8.5742,1.0068);
				\path[draw=black,line cap=round, line width=\figuresLineThickness] (11.3110,6.3635) .. controls (10.6273,4.8696) and (10.4978,2.8454) .. (11.3110,1.0068);
			\end{tikzpicture}
		 			$\xleftrightarrow{\Omega_2}$
		 			\begin{tikzpicture}[scale=0.35, yscale=-1.000000, baseline=-8.3ex]
				\path[draw=black,line cap=round, line width=\figuresLineThickness] (14.3375,6.3635) .. controls (17.5139,5.0641) and (17.4666,2.4388) .. (14.3375,1.0068);
				\path[draw=black,line cap=round, line width=\figuresLineThickness] (17.0743,6.3635) .. controls (16.6679,6.1669) and (16.3161,5.9553) .. (16.0185,5.7326);
				\path[draw=black,line cap=round, line width=\figuresLineThickness] (15.5553,5.3349) .. controls (14.5444,4.3272) and (14.5314,3.1568) .. (15.4808,2.1364);
				\path[draw=black,line cap=round, line width=\figuresLineThickness] (16.0092,1.6603) .. controls (16.3097,1.4296) and (16.6649,1.2104) .. (17.0743,1.0068);
			\end{tikzpicture}
		\end{center}
		\end{minipage}
		\begin{minipage}{0.4\textwidth}
		\begin{center}
			\begin{tikzpicture}[scale=0.35, yscale=-1.000000, baseline=-8.3ex]
				\path[draw=black,line cap=round, line width=\figuresLineThickness] (20.4607,5.8164) .. controls (20.8809,4.3245) and (22.1508,2.6571) .. (23.5724,1.4563);
				\path[draw=black,line cap=round, line width=\figuresLineThickness] (25.2101,4.5622) .. controls (24.9753,4.6492) and (24.7238,4.7209) .. (24.4594,4.7782);
				\path[draw=black,line cap=round, line width=\figuresLineThickness] (23.8183,4.8863) .. controls (22.9619,4.9932) and (22.0155,4.9726) .. (21.0887,4.8464);
				\path[draw=black,line cap=round, line width=\figuresLineThickness] (20.5227,4.7558) .. controls (20.2966,4.7141) and (20.0730,4.6662) .. (19.8536,4.6125);
				\path[draw=black,line cap=round, line width=\figuresLineThickness] (21.5208,1.4158) .. controls (21.8082,1.6195) and (22.0894,1.8677) .. (22.3600,2.1500);
				\path[draw=black,line cap=round, line width=\figuresLineThickness] (22.7971,2.6460) .. controls (23.5313,3.5485) and (24.1605,4.6822) .. (24.5840,5.8102);
			\end{tikzpicture}
					$\xleftrightarrow{\Omega_3}$
		 			\begin{tikzpicture}[scale=0.35, yscale=-1.000000, baseline=-8.3ex]
				\path[draw=black,line cap=round, line width=\figuresLineThickness] (30.0930,5.5851) .. controls (29.8497,5.4026) and (29.6119,5.1872) .. (29.3823,4.9456);
				\path[draw=black,line cap=round, line width=\figuresLineThickness] (28.9133,4.4024) .. controls (28.1758,3.4675) and (27.5531,2.2831) .. (27.1482,1.1105);
				\path[draw=black,line cap=round, line width=\figuresLineThickness] (30.9484,1.0097) .. controls (30.6879,2.5376) and (29.6009,4.3296) .. (28.3138,5.6736);
				\path[draw=black,line cap=round, line width=\figuresLineThickness] (26.3630,2.2824) .. controls (26.6251,2.1859) and (26.9079,2.1083) .. (27.2061,2.0487);
				\path[draw=black,line cap=round, line width=\figuresLineThickness] (27.7906,1.9565) .. controls (28.5821,1.8626) and (29.4486,1.8769) .. (30.3044,1.9819);
				\path[draw=black,line cap=round, line width=\figuresLineThickness] (30.9322,2.0756) .. controls (31.1987,2.1225) and (31.4621,2.1780) .. (31.7196,2.2415);
			\end{tikzpicture}
		\end{center}
		\end{minipage}
		\caption{\label{Figure:Reidemeister}Three Reidemeister moves $\Omega_1, \Omega_2, \Omega_3$}
	\end{center}
\end{figure}

  We define  three  \emph{Reidemeister moves} $\Omega_1, \Omega_2, \Omega_3$ on   knotoid
diagrams.  The move $\Omega_i$ 
(see Fig.~\ref{Figure:Reidemeister})
on a    knotoid diagram $D$ preserves~$D$ outside a closed $2$-disk disjoint
   from the endpoints and modifies $D$ within this
   disk   as the standard  $i$-th Reidemeister move, for $i=1,2,3$  (pushing a branch of
$D$ over/under the endpoints is not
   allowed).

A \emph{knotoid} is defined to be an equivalence class of knotoid diagrams
under the Reidemeister moves $\Omega_1,\Omega_2,\Omega_3$ and ambient isotopies.

The \emph{crossing number} of a knotoid $K$ is
the minimal number of crossings over all representative diagrams;
we denote the value by $\crn(K)$.

Given a knotoid diagram $D$, a \emph{shortcut} of $D$ is
an embedded oriented arc $\gamma$
  starting at the beginning  of $D$,
ending at the end of $D$  and otherwise meeting~$D$ transversely
at a finite set of points distinct from the crossings of $D$.

The \emph{height of a knotoid diagram} $D$ is defined to be the minimum  over all shortcuts of $D$
of the number of points in which the interior of a shortcut intersects with $D$.
The \emph{height of a knotoid} is the minimum of height
over all representative diagrams.
We denote by $\hh(D)$ and $\hh(K)$ the height of a knotoid diagram $D$ and of a knotoid $K$, respectively.

The notion of the height of knotoid was introduced by Turaev
in~\cite{TuraevKnotoids}
under the name ``complexity of a knotoid''.
We prefer the term ``height'' which was proposed in~\cite{KauffmanInvariants}.

\section{The main result}
\label{sec:MainResult}

\begin{theorem}
\label{theorem:MainResult}
For a knotoid $K$
\begin{equation} \label{eq:Main}
\crn(K) \geq 2 \hh(K)
\end{equation}
and there exists an infinite family of knotoids
for which the inequality~\eqref{eq:Main}
becomes equality.
\end{theorem}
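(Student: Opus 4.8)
The plan is to prove the inequality $\crn(K)\ge 2\hh(K)$ by working with a minimal‑crossing diagram $D$ of $K$, so that $\crn(D)=\crn(K)$, and exhibiting a shortcut of $D$ whose interior meets $D$ in at most $\tfrac12\crn(D)$ points; since $\hh(K)\le\hh(D)$ this gives the claim. The first step is to fix a diagram $D$ with $n=\crn(K)$ crossings and to choose, among all shortcuts of $D$, one that is ``geodesic'' in the sense that its number of intersection points with $D$ realizes $\hh(D)$; call this number $h$. The key combinatorial idea is to analyze how such an optimal shortcut $\gamma$ sits relative to the crossings of $D$. I would cut $S^2$ along $\gamma$ and along $D$ to obtain a cell decomposition, and count: each crossing of $D$ contributes $4$ to the local edge count, while each of the $h$ interior intersection points of $\gamma$ with $D$ contributes a fixed amount; an Euler‑characteristic bookkeeping on $S^2$, together with the minimality of $D$ (which forbids $\Omega_1$‑ and $\Omega_2$‑reducible configurations) and the minimality of $\gamma$ (which forbids bigons between $\gamma$ and arcs of $D$ that could be removed by an isotopy of the shortcut), should force $n\ge 2h$.

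More concretely, the mechanism I expect to drive the bound is the following: walk along the shortcut $\gamma$ from the beginning to the end of $D$; between two consecutive intersection points of $\gamma$ with $D$ one traverses a sub‑arc that, together with a piece of $D$, bounds a disk on one side, and minimality of $\gamma$ (no removable bigons) forces each such disk to contain at least one crossing of $D$ in a controlled way, and each crossing can be ``charged'' by at most the two strands through it, i.e.\ by at most two of these sub‑arcs. Turning this charging argument into a clean inequality is, I expect, the main obstacle: one has to rule out degenerate situations (a strand of $D$ running parallel to $\gamma$, nested bigons, crossings on both sides being charged twice) and to make precise the claim that an optimal $D$ and an optimal $\gamma$ can be chosen simultaneously so that no further reduction of either is possible — the subtlety flagged in the introduction that $\crn(K)$ and $\hh(K)$ may a priori be attained on different diagrams. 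I anticipate the proof will handle this by an innermost‑disk / minimal‑position argument: take the pair $(D,\gamma)$ minimizing $\crn(D)$ first and, among those, minimizing $|\gamma\cap D|$, and then show any violation of $\crn(D)\ge 2|\gamma\cap D|$ yields either a crossing‑reducing move on $D$ or a shortcut with fewer intersections, contradicting minimality.

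For the second assertion, that the bound is sharp for infinitely many knotoids, the plan is to exhibit an explicit infinite family $K_m$ — the natural candidates are the ``standard'' knotoids obtained by a simple repeated pattern (for instance the knotoids whose diagrams are a monotone braid‑like twist region of $2m$ crossings arranged in a spiral), compute $\hh(K_m)=m$ from a lower bound supplied by one of the polynomial invariants discussed in Sections~\ref{sec:ExtendedBracketPolynomial} and~\ref{sec:ViaKauffman} (so that $\hh(K_m)\ge m$), observe directly from the diagram that $\hh(K_m)\le m$ and $\crn(K_m)\le 2m$, and then conclude $\crn(K_m)=2m=2\hh(K_m)$ using the already‑proved inequality $\crn(K_m)\ge 2\hh(K_m)$ to pin down the crossing number from below. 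The only real work here is choosing the family so that a computable invariant certifies $\hh(K_m)\ge m$; everything else is a direct diagram inspection.
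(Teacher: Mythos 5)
Your overall framing is the same as the paper's: fix a diagram $D$ with $\crn(D)=\crn(K)$, forget that both minima might live on different representatives by proving a statement about the single fixed diagram, and use $\hh(D)\ge\hh(K)$ at the end; your plan for sharpness (spiral-type examples, height certified from below by a polynomial invariant, crossing number pinned down by the inequality itself) is exactly how the paper handles the second assertion, via the spiral knotoids of~\cite{KauffmanInvariants}. The gap is in the core charging argument, and it is not a technicality: the scheme you describe --- each of the $\hh+1$ sub-arcs of $\gamma$ between consecutive intersection points is charged to at least one crossing, and each crossing absorbs charge from at most two sub-arcs --- yields an inequality of the shape $\crn\gtrsim \hh/2$, which is off from the required $\crn\ge 2\hh$ by a factor of four. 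Even the correct ``cheap'' count (each of the $\hh$ intersection points lies on an edge of $D$ with two distinct crossing endpoints, and each crossing is the endpoint of at most two such edges) only gives $\crn\ge\hh+1$. The entire difficulty of the theorem is the remaining factor of $2$: one must show that the crossings adjacent to \emph{two} shortcut-met edges are compensated by crossings adjacent to \emph{none}. In the paper this is the inequality $c_0(F,\gamma)+2\ge c_2(F,\gamma)$ of Theorem~\ref{Theorem:Equivalence}, where $c_n$ counts crossings adjacent to exactly $n$ $\gamma$-edges; proving it occupies Sections~\ref{sec:BorderEdges}--\ref{sec:ProofForPrime} (border chains, one- and two-sided exceptional crossings, and an injective assignment $Z$ of chains to type-$0$ crossings with many cases, including one where the shortcut itself must be re-chosen). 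Your proposal does not identify this compensation mechanism, and the honest caveat that ``turning the charging into a clean inequality is the main obstacle'' is in fact an admission that the substance of the proof is missing.

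Two smaller points. First, you lean on minimality of $D$ (no $\Omega_1$- or $\Omega_2$-reducible configurations) inside the combinatorial argument; the paper never does. It proves $\crn(F)\ge 2\hh(F)$ for an \emph{arbitrary} flat diagram $F$, using primality rather than minimality and reducing the non-prime case by cutting along a separating circle and inducting on the number of crossings (Theorem~\ref{Theorem:NonPrimeFlatKnotoids}); minimality of $D$ is used only once, at the very end, to identify $\crn(D)$ with $\crn(K)$. Second, the claim that minimality of $\gamma$ forces each complementary disk of a sub-arc to ``contain a crossing'' is not what bigon-removal gives you; what it gives (Theorem~\ref{Theorem:GammaMinimum}) is that $\gamma$ meets the boundary of each region it enters in exactly two points lying on two distinct edges, which is a statement about edges and regions, not about crossings trapped in disks.
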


The second part of the theorem proves by the infinite family
of spiral knotoids considered in~\cite[Section 4]{KauffmanInvariants}.

The first part of theorem will be proved below in Section~\ref{sec:ProofOfMainResult}.
Before the proof we give three applications of the result.

\subsection{A lower bound for the crossing number of a knotoid via the extended bracket polynomial}
\label{sec:ExtendedBracketPolynomial}

V.~Turaev  in~\cite{TuraevKnotoids} introduced the extended bracket polynomial of a knotoid.
Note that L.~Kauffman~\cite{KauffmanExtended} 
has used the term ``extended bracket polynomial''
for another polynomial which is an invariant of virtual knots and links.

Turaev's extended bracket polynomial is a Laurent polynomial
$\langle \langle K \rangle \rangle_{\circ}(a,u) \in \mathbb{Z}[a^{\pm 1},u^{\pm 1}]$.
Here the variable $a$ has the same sense as in the case
of classical Kauffman bracket polynomial
while the variable $u$ counts intersections of curves in each state with a shortcut
(for details see~\cite[Section 8]{TuraevKnotoids}).
In particular, among other properties of the polynomial Turaev establishes following inequality.

\cite[Section 8.3]{TuraevKnotoids} 
For a knotoid $K$
$$\spn_u (\langle \langle K \rangle \rangle_{\circ}) \leq 2 \hh(K)$$
where $\spn_u( \, )$ denotes the span of the polynomial with respect to  the variable $u$
(i.e., the difference between the maximal and the minimal degrees of  variable $u$
involved in the polynomial).

Combining the inequality with~\eqref{eq:Main}
we obtain following statement.

\begin{corollary} \label{corol:spn_u}
For a knotoid $K$
$$\crn(K) \geq \spn_u(\langle \langle K \rangle \rangle_{\circ}).$$
\end{corollary}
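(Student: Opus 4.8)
The plan is to chain together two inequalities that are already on the table. Turaev's result quoted just above (from~\cite[Section 8.3]{TuraevKnotoids}) gives, for any knotoid $K$,
\[
\spn_u\bigl(\langle \langle K \rangle \rangle_{\circ}\bigr) \leq 2\,\hh(K),
\]
while our main result, Theorem~\ref{theorem:MainResult}, gives
\[
2\,\hh(K) \leq \crn(K).
\]
Concatenating these two inequalities immediately yields
\[
\spn_u\bigl(\langle \langle K \rangle \rangle_{\circ}\bigr) \leq \crn(K),
\]
which is exactly the claim. So the ``proof'' is a single transitivity step and needs nothing beyond what has already been established.

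There is genuinely no obstacle here: both component inequalities hold for every knotoid, with no hypotheses to check and no compatibility conditions between the diagrams realizing the two minima (the height is a well-defined invariant of the knotoid, so both bounds refer to the same quantity $\hh(K)$). In particular there is no need to worry that the crossing number and the height might be attained on different representative diagrams — Theorem~\ref{theorem:MainResult} already absorbs that difficulty. For this reason, as noted in the introduction, the corollary requires no separate argument and we simply record it as a consequence.

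I would therefore present the proof in one or two sentences, or omit it entirely as the paper's introduction suggests, noting only that it follows by combining Theorem~\ref{theorem:MainResult} with the cited inequality of Turaev. If a one-line proof is wanted for completeness, it reads: by Theorem~\ref{theorem:MainResult}, $\crn(K) \geq 2\,\hh(K)$, and by~\cite[Section 8.3]{TuraevKnotoids}, $2\,\hh(K) \geq \spn_u(\langle \langle K \rangle \rangle_{\circ})$; combining gives $\crn(K) \geq \spn_u(\langle \langle K \rangle \rangle_{\circ})$. $\qed$
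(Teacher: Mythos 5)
Your proposal is correct and matches the paper exactly: the corollary is obtained by chaining Turaev's bound $\spn_u(\langle \langle K \rangle \rangle_{\circ}) \leq 2\hh(K)$ with Theorem~\ref{theorem:MainResult}, which is why the paper states in the introduction that this corollary needs no separate proof. Nothing further is required.
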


\subsection{A lower bound for the crossing number of a knotoid via the affine index polynomial and the arrow polynomial}
\label{sec:ViaKauffman}

The affine index polynomial~\cite{KauffmanAffine}
and the arrow polynomial~\cite{KauffmanArrow}
are known invariants of virtual knots and links.
In~\cite{KauffmanInvariants}
these invariants are generalized to the
classical (i.e., in $S^2$) and virtual knotoids.
In particular, the authors establish following lower bound estimations for the height of a knotoid.

\cite[Theorem 4.12]{KauffmanInvariants}
Let $K$ be a classical knotoid.
The height of $K$ is greater than or equal to the maximum degree of the affine index
polynomial of $K$.

\cite[Theorem 5.4]{KauffmanInvariants}
The height of a classical knotoid $K$ is greater than or equal to the $\Lambda$-degree of
its arrow polynomial.

Combining these theorems with Theorem~\ref{theorem:MainResult}
we obtain following statement.

\begin{corollary} \label{corol:Affine}
For a knotoid $K$
$$
\crn(K) \geq 2 d_{max}(K) \quad \text{and} \quad \crn(K) \geq 2 d_{\Lambda}(K)
$$
where $d_{max}(K)$ and $d_{\Lambda}(K)$ denote respectively
the maximum degree of the affine index polynomial 
and $\Lambda$-degree of the arrow polynomial
of $K$.
\end{corollary}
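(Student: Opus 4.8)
The plan is to obtain both inequalities at once by simply composing Theorem~\ref{theorem:MainResult} with the two height estimates recalled immediately above. First I would fix a classical knotoid $K$ (i.e.\ a knotoid in $S^2$, which is the standing setting of the paper) and write down the two lower bounds for its height taken from \cite{KauffmanInvariants}: by \cite[Theorem 4.12]{KauffmanInvariants} one has $\hh(K) \ge d_{max}(K)$, and by \cite[Theorem 5.4]{KauffmanInvariants} one has $\hh(K) \ge d_{\Lambda}(K)$. Both are stated exactly for classical knotoids, so they apply verbatim and no additional hypotheses are needed.

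Then I would substitute these into inequality~\eqref{eq:Main}. Doubling $\hh(K) \ge d_{max}(K)$ and combining it with $\crn(K) \ge 2\hh(K)$ yields the chain $\crn(K) \ge 2\hh(K) \ge 2 d_{max}(K)$, and the identical manipulation with the arrow polynomial bound yields $\crn(K) \ge 2\hh(K) \ge 2 d_{\Lambda}(K)$. That is the entire argument.

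I do not expect any real obstacle: the statement is a formal consequence of a result already established in the paper (Theorem~\ref{theorem:MainResult}) together with two quoted theorems of Kauffman and Gugumcu, which is precisely why the paper lists it as a corollary that "does not need a proof". The only thing to be careful about is bookkeeping — checking that the quoted estimates are for classical knotoids (they are) and that $d_{max}(K)$ and $d_{\Lambda}(K)$ in our notation denote, respectively, the maximum degree of the affine index polynomial and the $\Lambda$-degree of the arrow polynomial, in agreement with the conventions of \cite{KauffmanInvariants}.
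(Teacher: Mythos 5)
Your proposal is correct and coincides with the paper's own (implicit) argument: the corollary is obtained exactly by chaining the height bounds $\hh(K)\geq d_{max}(K)$ and $\hh(K)\geq d_{\Lambda}(K)$ from \cite{KauffmanInvariants} with the inequality $\crn(K)\geq 2\hh(K)$ of Theorem~\ref{theorem:MainResult}. Nothing further is needed.
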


\subsection{Minimality of a knot diagram and the length of the longest bridge}
\label{sec:Bridge}

Consider a diagram $D$ of a classical knot.
Recall that
\emph{an over-bridge} (resp. \emph{under-bridge}) 
of length $k$ of a diagram $D$ is a 
consecutive sequence of $k$ over-crossings (resp.
under-crossings) in $D$.

\begin{corollary} \label{corol:Bridge}
If a diagram $D$ of a knot is minimal with respect to the number of crossings
and $k(D)$ is the maximum of the length over all bridges 
(both over-bridges and under-bridges) in the diagram $D$
then
$$\crn(D) \geq 3 k(D)$$
where $\crn(D)$ denotes the number of crossings in $D$.
\end{corollary}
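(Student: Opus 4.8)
The plan is to attach to a long bridge of $D$ an auxiliary knotoid $K'$ whose crossing number is bounded above by $\crn(D)$ and whose diagrams are forced to have large height by the minimality of $D$, and then to play these two facts against Theorem~\ref{theorem:MainResult}. Let $\mathcal K$ be the knot represented by $D$, so $\crn(\mathcal K)=n:=\crn(D)$ because $D$ is minimal, and set $k:=k(D)$. Replacing $D$ by its mirror image if necessary (this preserves $n$ and the multiset of bridge lengths), we may assume that a bridge of length $k$ is an over-bridge $b$, running over the crossings $c_1,\dots,c_k$, with endpoints $x,y$ lying in the interiors of edges of $D$. Deleting the open over-arc $b$ from $D$ leaves an immersed arc $\alpha$ from $y$ to $x$; the points $c_1,\dots,c_k$ become ordinary points of $\alpha$, so $\alpha$ is a legitimate knotoid diagram with exactly $n-k$ crossings. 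Let $K'$ be the knotoid it represents, so that $\crn(K')\le n-k$. The first point to record is that the over-pass closure satisfies $\mathrm{cl}_+(K')=\mathcal K$: gluing back to $\alpha$ an over-arc joining $x$ to $y$ represents $\mathrm{cl}_+(K')$, and one admissible such over-arc is $b$ itself, which recovers $D$.

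Next, minimality is converted into a lower bound on the height of every diagram of $K'$. If $E$ is any knotoid diagram of $K'$ and $\gamma$ a shortcut of $E$ meeting $E$ in $\hh(E)$ points, then pushing $\gamma$ over all strands it meets and adjoining it to $E$ yields a diagram of $\mathrm{cl}_+(K')=\mathcal K$ with $\crn(E)+\hh(E)$ crossings; hence $\crn(E)+\hh(E)\ge n$ for every $E$. Now apply Theorem~\ref{theorem:MainResult}, or rather the argument proving it in Section~\ref{sec:ProofOfMainResult}, which produces a \emph{minimal-crossing} diagram $E_0$ of $K'$ carrying a shortcut that meets it in at most half as many points as there are crossings, so $\hh(E_0)\le\tfrac12\crn(E_0)=\tfrac12\crn(K')$. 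Since $E_0$ is a diagram of $K'$ we also have $\crn(E_0)+\hh(E_0)\ge n$, i.e. $\hh(E_0)\ge n-\crn(K')\ge n-(n-k)=k$. Combining, $k\le\hh(E_0)\le\tfrac12\crn(K')$, so $\crn(K')\ge 2k$; together with $\crn(K')\le n-k$ this gives $n-k\ge 2k$, that is, $\crn(D)=n\ge 3k=3k(D)$, which is Corollary~\ref{corol:Bridge}.

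The routine verifications are that $\mathrm{cl}_+$ is a well-defined knotoid invariant (Turaev), that $\alpha$ is a valid knotoid diagram, and the exact crossing count of the over-pass closure. The step needing genuine care is that one must invoke the proof of Theorem~\ref{theorem:MainResult}, not merely its statement: the inequality $\crn(K')\ge 2\hh(K')$ by itself only controls the height of \emph{some} diagram of $K'$, whereas the argument above requires, for one and the same diagram $E_0$, both the lower bound $\hh(E_0)\ge k$ coming from $\crn(E_0)+\hh(E_0)\ge n$ and the upper bound $\hh(E_0)\le\tfrac12\crn(E_0)$ coming from minimality of $E_0$ — and it is precisely this diagram-level statement that Section~\ref{sec:ProofOfMainResult} supplies. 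A secondary check is that the $\emph{longest}$ bridge is the right object to feed in: any bridge of length $k$ works in the construction, so taking the longest maximizes the bound.
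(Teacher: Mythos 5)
Your argument is correct, and at its core it is the same mechanism as the paper's: delete the longest bridge $b$, view what remains as a knotoid-type diagram with $n-k$ crossings, use minimality of $D$ to force a height lower bound of $k$, and apply the crossing--height inequality \emph{at the level of a single diagram}. The difference is in the packaging. The paper applies everything to the one flat diagram $F=D\setminus b$: the bridge $b$ itself is a shortcut of $F$ meeting it $k$ times, minimality of $D$ shows no shortcut can do better (re-routing the bridge along a cheaper shortcut would reduce the crossing count of $D$), so $\hh(F)=k$, and Theorem~\ref{Theorem:NonPrimeFlatKnotoids} gives $n-k=\crn(F)\geq 2\hh(F)=2k$ directly. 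You instead pass to the knotoid $K'$, a minimal-crossing diagram $E_0$ of it, and the well-definedness of the overpass closure $\mathrm{cl}_+$ to obtain $\crn(E)+\hh(E)\geq n$ for \emph{every} diagram $E$ of $K'$, then combine with $\hh(E_0)\leq\tfrac12\crn(E_0)$. Two remarks. First, your worry that one must ``invoke the proof of Theorem~\ref{theorem:MainResult}, not merely its statement'' is well placed, but the paper already isolates exactly the diagram-level fact you need as a standalone result: Theorem~\ref{Theorem:NonPrimeFlatKnotoids} asserts $\crn(F)\geq 2\hh(F)$ for an arbitrary flat knotoid diagram $F$, and applying it to the FKD underlying $E_0$ (or any other diagram of $K'$) is a citation, not a re-derivation. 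Second, your detour through $E_0$ is avoidable: applying your two inequalities to $E=\alpha=D\setminus b$ itself already yields $k\leq\hh(\alpha)\leq\tfrac12(n-k)$, which is precisely the paper's computation; the price of your more general route is that it needs the full invariance of $\mathrm{cl}_+$ under Reidemeister moves of $E$ (Turaev), whereas the paper only needs invariance under a change of shortcut for the fixed diagram. Both versions are sound.
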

 
We prove Corollary~\ref{corol:Bridge}
in~Section~\ref{sec:ProofOfCorollary}
just after a proof of~Theorem~\ref{theorem:MainResult}.

\section{Proof of Theorem~\ref{theorem:MainResult}}
\label{sec:ProofOfMainResult}

The proof is divided into following  $4$ parts.
Firstly in Sections
~\ref{sec:FlatKnotoidDiagram},
\ref{sec:Prime},
we give a few additional definitions,
in particular, the definition of a flat knotoid diagram
and the definition of a prime flat knotoid diagram.
Then in Sections~
\ref{sec:GammaRegions}
--\ref{sec:ProofForPrime}
we prove the inequality $\crn(F) \geq 2 \hh(F)$ for a prime flat knotoid diagram $F$.
Then in Section~
\ref{sec:ProofForArbitrary}
we prove the inequality for any (including non-prime) flat knotoid diagrams.
Finally in Section~
\ref{sec:ProofForKnotoid}
we consider an arbitrary knotoid,
and this completes the proof of Theorem~\ref{theorem:MainResult}.

\subsection{Flat knotoid diagram}
\label{sec:FlatKnotoidDiagram}

A \emph{flat knotoid diagram} (or \emph{\fkd}~for short)
is a knotoid diagram of which crossings are not equipped with over/under-information.
Therefore, all crossings of \fkd~(if any) are flat crossings
consisting in the transversal intersections of strands without any over/under-crossing information.
A \fkd~is called \emph{trivial} if it has no crossings
(i.e., if it is an embedding of $[0,1]$).
The notions  of endpoints, shortcut and height of a \fkd~are defined
in analogy with the notions of endpoints, shortcut and height of knotoid diagram.

It is clear that a \fkd~can be viewed as a graph embedded into $S^2$
satisfying some conditions coming from its relation with a knotoid diagram.
In particular, a \fkd~has two univalent vertices (the endpoints)
and all its other vertices (if any) are $4$-valent,
the latter vertices are called the \emph{crossings} of the \fkd.
The edges of the graph is called the \emph{edges} of the \fkd.
Two edges adjacent to the endpoints are called the \emph{outer} edges.
Given a \fkd~$F$,
connected components of the set $S^2 \setminus F$
are called the \emph{regions} of the \fkd~$F$.

The notion of \fkd~is introduced
in~\cite[Section 3.2]{KauffmanInvariants}.
Recently  Turaev in~\cite{TuraevVirtualStrings}
has studied an ``open strings'' 
which also can be regarded as a generic immersions of a segment into a surface.
But it is necessary to emphasize that
in both these works the authors  take an interest in an equivalence classes of corresponding objects
while we below deal with an individual \fkd.
Throughout a \fkd~(unlike knotoid diagrams) is regarded as immovable,
the only what can be changed is its shortcut.

\subsection{Prime flat knotoid diagram}
\label{sec:Prime}

A \fkd~$F$ is called \emph{prime} if following two conditions hold:
\begin{itemize}
\item[(i)]
Every  embedded circle meeting $F$ transversely in exactly two
points bounds a disk meeting $F$ along a proper embedded
arc or along two disjoint   embedded arcs adjacent to the endpoints of $F$.
\item [(ii)]Every embedded circle meeting $F$ transversely in exactly one point
bounds a regular neighborhood of one of the endpoints of $F$.
\end{itemize}

\begin{figure}
	\begin{center}
		\begin{minipage}{0.49\textwidth}
			\begin{center}
				\begin{tikzpicture}[scale=0.3, yscale=-1.0]
					\path[fill=black] (0.3846,8.8002) circle (\figuresPointSize);
					\path[fill=black] (5.4323,8.8640) circle (\figuresPointSize);
					\path[draw=black,fill=lightgray,line cap=round, line width=\figuresThinLineThickness] (10.6227,4.7535) .. controls (10.6227,6.6121) and (9.1160,8.1188) .. (7.2574,8.1188) .. controls (5.3987,8.1188) and (3.8920,6.6121) .. (3.8920,4.7535) .. controls (3.8920,2.8948) and (5.3987,1.3881) .. (7.2574,1.3881) .. controls (9.1160,1.3881) and (10.6227,2.8948) .. (10.6227,4.7535) -- cycle;
					\path[fill=black] (5.0562,7.2992) circle (\figuresSmallPointSize);
					\path[fill=black] (8.1339,8.0027) circle (\figuresSmallPointSize);
					\path[draw=black, line cap=round, line width=\figuresLineThickness] (0.3846,8.8002) .. controls (1.3398,9.0485) and (2.3752,8.9772) .. (3.2874,8.6004) .. controls (4.1996,8.2236) and (4.9834,7.5433) .. (5.4848,6.6932) .. controls (5.7823,6.1890) and (5.9803,5.6330) .. (6.2439,5.1103) .. controls (6.5075,4.5876) and (6.8540,4.0820) .. (7.3577,3.7836) .. controls (7.6431,3.6145) and (7.9726,3.5187) .. (8.3043,3.5157) .. controls (8.6360,3.5128) and (8.9690,3.6038) .. (9.2493,3.7813) .. controls (9.5295,3.9589) and (9.7551,4.2232) .. (9.8797,4.5307) .. controls (10.0042,4.8381) and (10.0258,5.1875) .. (9.9329,5.5060) .. controls (9.8415,5.8191) and (9.6422,6.0968) .. (9.3858,6.2986) .. controls (9.1295,6.5004) and (8.8182,6.6278) .. (8.4966,6.6827) .. controls (7.8535,6.7924) and (7.1897,6.6153) .. (6.6052,6.3253) .. controls (6.2118,6.1302) and (5.8414,5.8816) .. (5.5383,5.5638) .. controls (5.2352,5.2459) and (5.0012,4.8562) .. (4.8992,4.4291) .. controls (4.7704,3.8899) and (4.8668,3.2913) .. (5.1969,2.8460) .. controls (5.3620,2.6233) and (5.5824,2.4410) .. (5.8348,2.3264) .. controls (6.0872,2.2118) and (6.3712,2.1658) .. (6.6461,2.2010) .. controls (6.9859,2.2445) and (7.3042,2.4108) .. (7.5544,2.6448) .. controls (7.8045,2.8787) and (7.9887,3.1778) .. (8.1129,3.4970) .. controls (8.3614,4.1353) and (8.3729,4.8377) .. (8.3777,5.5227) .. controls (8.3825,6.2034) and (8.3834,6.8897) .. (8.2480,7.5569) .. controls (8.1127,8.2240) and (7.8301,8.8786) .. (7.3410,9.3520) .. controls (6.9290,9.7509) and (6.3830,10.0038) .. (5.8149,10.0824) .. controls (5.2469,10.1609) and (4.6593,10.0679) .. (4.1339,9.8381) .. controls (3.6085,9.6084) and (3.1453,9.2445) .. (2.7799,8.8026) .. controls (2.4144,8.3607) and (2.1457,7.8421) .. (1.9732,7.2952) .. controls (1.7050,6.4446) and (1.6702,5.5217) .. (1.8729,4.6531) .. controls (2.0656,3.8269) and (2.4716,3.0524) .. (3.0349,2.4181) .. controls (3.5983,1.7837) and (4.3175,1.2901) .. (5.1089,0.9844) .. controls (6.6918,0.3731) and (8.5442,0.5489) .. (10.0165,1.3923) .. controls (10.7197,1.7952) and (11.3420,2.3443) .. (11.8070,3.0080) .. controls (12.2721,3.6717) and (12.5772,4.4506) .. (12.6595,5.2568) .. controls (12.7419,6.0631) and (12.5980,6.8949) .. (12.2256,7.6146) .. controls (11.8531,8.3343) and (11.2500,8.9371) .. (10.5182,9.2851) .. controls (9.9673,9.5472) and (9.3552,9.6635) .. (8.7456,9.6865) .. controls (7.5934,9.7300) and (6.4304,9.4413) .. (5.4323,8.8640);
				\end{tikzpicture}
			\end{center}

		\end{minipage}
		\hfill
		\begin{minipage}{0.49\textwidth}
			\begin{center}
				\begin{tikzpicture}[scale=0.3, yscale=-1.0]
					\path[draw=black,fill=lightgray,line cap=round, line width=\figuresThinLineThickness] (21.2806,6.5081) circle (3.0);
					\path[fill=black] (17.2739,11.4088) circle (\figuresPointSize);
					\path[fill=black] (22.6040,7.2595) circle (\figuresPointSize);
					\path[fill=black] (22.8479,3.9647) circle (\figuresSmallPointSize);
					\path[draw=black,line cap=round,line width=\figuresLineThickness] (17.2739,11.4088) .. controls (16.4331,10.5073) and (15.9423,9.2875) .. (15.9244,8.0549) .. controls (15.9066,6.8223) and (16.3617,5.5888) .. (17.1760,4.6632) .. controls (17.9902,3.7377) and (19.1557,3.1290) .. (20.3805,2.9897) .. controls (21.6053,2.8503) and (22.8778,3.1817) .. (23.8791,3.9007) .. controls (24.4949,4.3428) and (25.0125,4.9328) .. (25.3203,5.6256) .. controls (25.6281,6.3185) and (25.7176,7.1157) .. (25.5178,7.8471) .. controls (25.2991,8.6479) and (24.7384,9.3396) .. (24.0253,9.7645) .. controls (23.3122,10.1895) and (22.4566,10.3540) .. (21.6301,10.2762) .. controls (20.8036,10.1984) and (20.0071,9.8854) .. (19.3171,9.4238) .. controls (18.6271,8.9622) and (18.0410,8.3551) .. (17.5581,7.6798) .. controls (16.9137,6.7787) and (16.4426,5.7310) .. (16.3476,4.6272) .. controls (16.2526,3.5234) and (16.5620,2.3625) .. (17.3073,1.5428) .. controls (17.6801,1.1328) and (18.1543,0.8149) .. (18.6770,0.6308) .. controls (19.1997,0.4467) and (19.7699,0.3974) .. (20.3155,0.4941) .. controls (20.8612,0.5909) and (21.3808,0.8342) .. (21.8012,1.1952) .. controls (22.2215,1.5563) and (22.5410,2.0347) .. (22.7085,2.5629) .. controls (22.8688,3.0685) and (22.8899,3.6105) .. (22.8317,4.1377) .. controls (22.7735,4.6649) and (22.6383,5.1803) .. (22.4911,5.6899) .. controls (22.3000,6.3518) and (22.0771,7.0278) .. (21.6254,7.5481) .. controls (21.3996,7.8082) and (21.1174,8.0239) .. (20.7951,8.1454) .. controls (20.4727,8.2669) and (20.1091,8.2900) .. (19.7822,8.1815) .. controls (19.5496,8.1043) and (19.3398,7.9623) .. (19.1773,7.7788) .. controls (19.0148,7.5954) and (18.8996,7.3714) .. (18.8392,7.1339) .. controls (18.7184,6.6590) and (18.8222,6.1402) .. (19.0798,5.7234) .. controls (19.3623,5.2663) and (19.8217,4.9285) .. (20.3328,4.7631) .. controls (20.8440,4.5976) and (21.4030,4.5999) .. (21.9226,4.7368) .. controls (22.2777,4.8303) and (22.6179,4.9864) .. (22.9101,5.2088) .. controls (23.2024,5.4312) and (23.4456,5.7209) .. (23.5986,6.0547) .. controls (23.7516,6.3885) and (23.8123,6.7667) .. (23.7544,7.1293) .. controls (23.6966,7.4920) and (23.5179,7.8370) .. (23.2436,8.0812) .. controls (23.0182,8.2819) and (22.7336,8.4114) .. (22.4372,8.4687) .. controls (22.1408,8.5259) and (21.8331,8.5126) .. (21.5380,8.4490) .. controls (21.1797,8.3719) and (20.8335,8.2179) .. (20.5599,7.9740) .. controls (20.2864,7.7300) and (20.0900,7.3920) .. (20.0497,7.0277) .. controls (20.0072,6.6436) and (20.1455,6.2434) .. (20.4160,5.9675) .. controls (20.6865,5.6916) and (21.0839,5.5454) .. (21.4688,5.5803) .. controls (21.8536,5.6153) and (22.2182,5.8306) .. (22.4346,6.1507) .. controls (22.6510,6.4708) and (22.7150,6.8894) .. (22.6040,7.2595);
				\end{tikzpicture}
			\end{center}
		\end{minipage}
		\caption{\label{Figure:NonPrime}Two examples of non-prime FKD. The one on the left-hand side does not satisfies the condition~(I), the one on the right-hand side does not satisfies the condition~(II)}
	\end{center}
\end{figure}
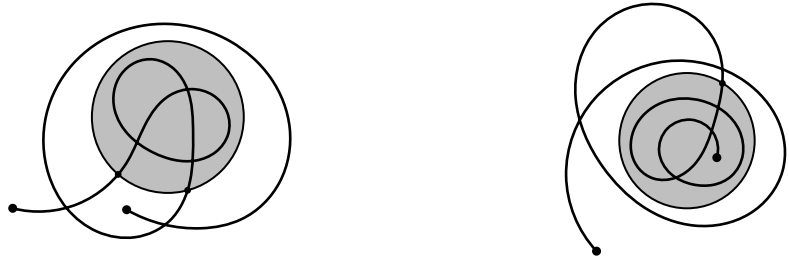

The definition is a direct analog of Turaev's definition of prime knotoid diagram
\cite[Section 7.3]{TuraevKnotoids}.
Two examples of non-prime \fkd~are shown 
in Fig.~\ref{Figure:NonPrime}.
The one depicted on the left-hand side does not satisfies the condition~(i),
the other one does not satisfies the condition~(ii).

\subsection{$\gamma$-edges and $\gamma$-regions}
\label{sec:GammaRegions}

Let $F$ be a \fkd~with a shortcut $\gamma$.
We need following definitions.
\begin{itemize}
\item [] \emph{Minimal shortcut}:
a shortcut $\gamma$ is called the \emph{minimal shortcut} of $F$
if $|\Int \gamma \cap F| = \hh(F)$,
where  $\Int \gamma$ denotes the interior of the shortcut $\gamma$
($|*|$ here and below denotes the cardinality of the corresponding set).

\item [] \emph{$\gamma$-edge}:
an edge $e$ of $F$ is called the \emph{$\gamma$-edge} of $F$
if either $e$ is an outer edge of $F$
or $e \cap \gamma \not= \emptyset$.

\item [] \emph{$\gamma$-region}:
a connected component $\Delta$ of the set $S^2 \setminus F$ is called the \emph{$\gamma$-region} of $F$
if $\Delta \cap \gamma \not=\emptyset$.
\end{itemize}

\begin{figure}[h]
	\begin{center}
		\begin{tikzpicture}[scale=0.3, yscale=-1.0]
			\path[fill=lightgray] (4.5753,4.8142) .. controls (4.5753,4.8142) and (3.6752,4.5767) .. (2.8002,5.2142) .. controls (1.9251,5.8517) and (1.5947,6.7227) .. (1.6376,7.4518) .. controls (1.6805,8.1810) and (1.9018,9.0565) .. (2.4547,9.7483) .. controls (3.0076,10.4402) and (4.2239,11.3367) .. (5.3753,11.6146) .. controls (7.1879,12.0521) and (8.7005,12.2771) .. (10.7132,11.7646) .. controls (12.7258,11.2521) and (14.0384,10.1520) .. (14.5634,9.2770) .. controls (15.0884,8.4019) and (15.3635,7.2018) .. (15.1259,6.2643) .. controls (14.8884,5.3267) and (14.5634,4.8142) .. (14.5634,4.8142) .. controls (14.5634,4.8142) and (14.4513,6.0658) .. (13.8759,6.8268) .. controls (13.3005,7.5878) and (12.5383,8.2894) .. (11.6507,8.5144) .. controls (10.7632,8.7394) and (10.1131,8.5144) .. (10.1131,8.5144) .. controls (10.9166,7.5515) and (11.1353,7.0855) .. (11.2257,6.4268) .. controls (11.3161,5.7681) and (11.2206,4.8807) .. (10.1631,4.1641) .. controls (9.1056,3.4476) and (8.0255,4.0266) .. (7.2379,4.5142) .. controls (6.4504,5.0017) and (6.0129,5.6892) .. (5.5628,6.2393) .. controls (5.1128,6.7893) and (4.8158,7.2372) .. (4.8158,7.2372) .. controls (4.8158,7.2372) and (4.5339,6.7525) .. (4.4945,6.0333) .. controls (4.4550,5.3142) and (4.5753,4.8142) .. (4.5753,4.8142) -- cycle;
			\path[draw=black,line cap=round,line width=\figuresThinLineThickness] (5.8016,10.5256) .. controls (5.9299,10.1125) and (6.1475,9.7274) .. (6.4351,9.4042) .. controls (6.8347,8.9553) and (7.3633,8.6285) .. (7.7256,8.1490) .. controls (8.0601,7.7065) and (8.2317,7.1614) .. (8.5388,6.6994) .. controls (8.7069,6.4465) and (8.9150,6.2203) .. (9.1530,6.0317);
			\draw (6.9,8.0) node {$\gamma$};
			\path[fill=black] (5.8016,10.5256) circle (\figuresPointSize);
			\path[fill=black] (9.1530,6.0317) circle (\figuresPointSize);
			\path[draw=black,line cap=round,line width=\figuresLineThickness] (5.8016,10.5256) .. controls (5.3175,10.3052) and (4.9160,9.9086) .. (4.6896,9.4273) .. controls (4.4632,8.9460) and (4.4138,8.3838) .. (4.5527,7.8704) .. controls (4.7120,7.2816) and (5.0989,6.7849) .. (5.4785,6.3074) .. controls (6.0765,5.5554) and (6.6988,4.7898) .. (7.5305,4.3087) .. controls (7.9464,4.0682) and (8.4116,3.9043) .. (8.8910,3.8729) .. controls (9.3704,3.8416) and (9.8641,3.9479) .. (10.2672,4.2091) .. controls (10.6976,4.4880) and (11.0089,4.9349) .. (11.1545,5.4265) .. controls (11.3002,5.9182) and (11.2852,6.4511) .. (11.1457,6.9446) .. controls (10.9438,7.6589) and (10.4799,8.2944) .. (9.8666,8.7127) .. controls (9.2534,9.1310) and (8.4961,9.3308) .. (7.7555,9.2794) .. controls (7.0149,9.2281) and (6.2953,8.9276) .. (5.7315,8.4447) .. controls (5.1676,7.9619) and (4.7612,7.3002) .. (4.5757,6.5814) .. controls (4.3906,5.8641) and (4.4250,5.0916) .. (4.6733,4.3936) .. controls (4.9215,3.6957) and (5.3825,3.0750) .. (5.9790,2.6356) .. controls (6.7519,2.0663) and (7.7294,1.8129) .. (8.6893,1.8202) .. controls (9.6492,1.8275) and (10.5944,2.0826) .. (11.4803,2.4522) .. controls (12.3245,2.8044) and (13.1326,3.2675) .. (13.7920,3.9015) .. controls (14.4514,4.5355) and (14.9567,5.3501) .. (15.1288,6.2485) .. controls (15.3006,7.1445) and (15.1314,8.0922) .. (14.7126,8.9028) .. controls (14.2939,9.7135) and (13.6345,10.3889) .. (12.8655,10.8800) .. controls (11.3275,11.8621) and (9.4177,12.0944) .. (7.5964,11.9820) .. controls (6.6407,11.9230) and (5.6826,11.7742) .. (4.7887,11.4312) .. controls (3.8947,11.0882) and (3.0644,10.5416) .. (2.4800,9.7832) .. controls (2.1488,9.3533) and (1.8994,8.8577) .. (1.7729,8.3299) .. controls (1.6465,7.8021) and (1.6451,7.2421) .. (1.7900,6.7190) .. controls (1.9349,6.1960) and (2.2283,5.7119) .. (2.6395,5.3576) .. controls (3.0506,5.0033) and (3.5798,4.7831) .. (4.1221,4.7624) .. controls (4.6754,4.7413) and (5.2227,4.9244) .. (5.6990,5.2069) .. controls (6.1752,5.4894) and (6.5871,5.8679) .. (6.9742,6.2638) .. controls (7.3613,6.6597) and (7.7278,7.0763) .. (8.1345,7.4521) .. controls (8.5412,7.8279) and (8.9931,8.1650) .. (9.5067,8.3719) .. controls (10.0241,8.5803) and (10.5952,8.6503) .. (11.1491,8.5843) .. controls (11.7031,8.5183) and (12.2392,8.3176) .. (12.7067,8.0131) .. controls (13.6416,7.4041) and (14.2796,6.3859) .. (14.5058,5.2933) .. controls (14.6980,4.3650) and (14.6006,3.3677) .. (14.1679,2.5242) .. controls (13.7352,1.6808) and (12.9566,1.0066) .. (12.0400,0.7647) .. controls (11.3781,0.5900) and (10.6583,0.6432) .. (10.0293,0.9134) .. controls (9.4002,1.1836) and (8.8660,1.6689) .. (8.5370,2.2693) .. controls (8.2079,2.8696) and (8.0861,3.5810) .. (8.1967,4.2566) .. controls (8.3074,4.9322) and (8.6497,5.5676) .. (9.1530,6.0317);
			\path[draw=black,line cap=round,line width=\figuresThickLineThickness] (6.1264,5.5088) .. controls (6.4136,5.7456) and (6.6894,5.9963) .. (6.9525,6.2596) .. controls (7.3439,6.6513) and (7.7072,7.0710) .. (8.1129,7.4479) .. controls (8.5185,7.8249) and (8.9713,8.1610) .. (9.4850,8.3677) .. controls (9.6805,8.4463) and (9.8837,8.5057) .. (10.0908,8.5448)(10.0556,8.5517) .. controls (9.9875,8.6068) and (9.9173,8.6591) .. (9.8450,8.7085) .. controls (9.2321,9.1276) and (8.4744,9.3270) .. (7.7338,9.2752) .. controls (6.9933,9.2235) and (6.2740,8.9229) .. (5.7098,8.4405) .. controls (5.3253,8.1118) and (5.0117,7.7005) .. (4.7965,7.2427);
		\end{tikzpicture}
		\caption{\label{Figure:Gamma}FKD with a shortcut $\gamma$. All three $\gamma$-regions  are shaded, and both $\gamma$-edges are drawn with thick lines}
	\end{center}
\end{figure}
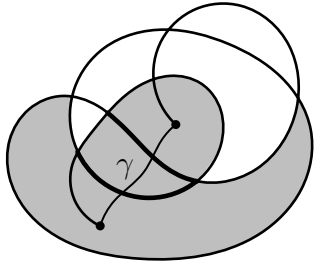

In Fig.~\ref{Figure:Gamma}
$\gamma$-regions of depicted \fkd~are shaded,
its $\gamma$-edges are drawn with thick lines.

\begin{figure}[h]
	\begin{center}
		\begin{minipage}{0.45\textwidth}
			\begin{center}
				\begin{tikzpicture}[scale=0.3, yscale=-1.0]
					\path[draw=black,line cap=round,line width=\figuresThickLineThickness] (0.8282,6.8295) .. controls (2.7506,6.3950) and (6.6540,3.7084) .. (7.2923,2.0844);
					\path[draw=black,line cap=round,line width=\figuresThickLineThickness] (5.6181,1.9909) .. controls (6.6387,3.6769) and (10.4226,6.5294) .. (12.1652,6.6208);
					\path[draw=black,line cap=round,line width=\figuresThickLineThickness] (11.6243,4.9732) .. controls (10.3428,6.4704) and (8.8190,10.9574) .. (9.2779,12.6409);
					\path[draw=black,line cap=round,line width=\figuresThickLineThickness] (4.1352,12.8389) .. controls (4.2915,10.8743) and (2.8857,6.3490) .. (1.5246,5.2571);
					\path[draw=black,line cap=round,line width=\figuresThickLineThickness] (10.4906,11.5070) .. controls (8.6524,10.7962) and (3.9141,10.8511) .. (2.4790,11.8437);
					\path[fill=black] (4.0361,11.2807) circle (\figuresSmallPointSize);
					\path[fill=black] (2.3210,6.2548) circle (\figuresSmallPointSize);
					\path[fill=black] (6.5650,3.1536) circle (\figuresSmallPointSize);
					\path[fill=black] (10.8225,6.2255) circle (\figuresSmallPointSize);
					\path[fill=black] (9.2378,11.1963) circle (\figuresSmallPointSize);
					\path[draw=black,->,line cap=round,line width=\figuresLineThickness] (1.2929,9.7254) .. controls (1.9793,9.8049) and (2.6921,9.6318) .. (3.2656,9.2465) .. controls (3.8391,8.8612) and (4.2687,8.2667) .. (4.4545,7.6012) .. controls (4.5863,7.1289) and (4.5977,6.6284) .. (4.5337,6.1422) .. controls (4.4698,5.6560) and (4.3323,5.1822) .. (4.1705,4.7193) .. controls (4.0912,4.4922) and (4.0053,4.2641) .. (3.9299,4.0339);
					\path[draw=black,dash pattern=on 5.0 off 5.0,line cap=round,line width=\figuresLineThickness] (3.9299,4.0339) .. controls (3.7987,3.6336) and (3.6991,3.2270) .. (3.7200,2.8093) .. controls (3.7487,2.2330) and (4.0155,1.6783) .. (4.4232,1.2699) .. controls (4.8308,0.8615) and (5.3725,0.5969) .. (5.9395,0.4899) .. controls (6.5920,0.3668) and (7.2907,0.4530) .. (7.8702,0.7772) .. controls (8.4496,1.1015) and (8.8976,1.6715) .. (9.0268,2.3228) .. controls (9.1199,2.7916) and (9.0504,3.2783) .. (8.9312,3.7411);
					\path[draw=black,->,line cap=round,line width=\figuresLineThickness] (8.9312,3.7411) .. controls (8.8119,4.2039) and (8.6435,4.6533) .. (8.5342,5.1185) .. controls (8.3119,6.0647) and (8.3491,7.0972) .. (8.7909,7.9630) .. controls (9.0608,8.4918) and (9.4765,8.9453) .. (9.9800,9.2600) .. controls (10.4834,9.5747) and (11.0732,9.7497) .. (11.6668,9.7606);
					\path[fill=black] (3.5088,9.0635) circle (\figuresSmallPointSize) node[left] {$p_1$};
					\path[fill=black] (4.2901,5.0801) circle (\figuresSmallPointSize) node[left] {$p_2$};
					\path[fill=black] (8.5941,4.8843) circle (\figuresSmallPointSize) node[right] {$p_{n-1}$};
					\path[fill=black] (9.6983,9.0624) circle (\figuresSmallPointSize) node[right] {$p_n$};
					
					\path[draw=black,line cap=round,line width=\figuresThinLineThickness] (3.5088,9.0635) .. controls (5.6885,9.8931) and (7.0145,10.0448) .. (9.6983,9.0625);
					\draw (6.4, -0.5) node {$\gamma$};
					\draw (6.6, 7.6) node {$\Delta$};
				\end{tikzpicture}
			\end{center}
		\end{minipage}
		\hfill
		\begin{minipage}{0.49\textwidth}
			\begin{center}
				\begin{tikzpicture}[scale=0.35, yscale=-1.0]
					\path[draw=black,line cap=round,line width=\figuresThickLineThickness] (1.4214,8.9841) .. controls (3.7959,7.1447) and (4.4313,4.1180) .. (3.4447,2.4458);
					\path[draw=black,line cap=round,line width=\figuresThickLineThickness] (13.9431,8.9841) .. controls (11.5685,7.1447) and (10.9331,4.1180) .. (11.9197,2.4458);
					\path[draw=black,line cap=round,line width=\figuresThickLineThickness] (0.7190,6.8437) .. controls (4.8661,4.7535) and (11.0533,7.1447) .. (13.8626,4.0679);
					\path[draw=black,->,line cap=round,line width=\figuresLineThickness] (3.4949,8.9841) .. controls (4.4539,7.8954) and (5.1726,6.5960) .. (5.5852,5.2050) .. controls (5.7900,4.5144) and (5.9249,3.7907) .. (6.2860,3.1674);
					\path[draw=black,->,line cap=round,line width=\figuresLineThickness] (6.2860,3.1674) .. controls (6.4665,2.8557) and (6.7045,2.5726) .. (7.0041,2.3727) .. controls (7.3038,2.1728) and (7.6680,2.0604) .. (8.0266,2.0947) .. controls (8.3169,2.1224) and (8.5941,2.2448) .. (8.8261,2.4215) .. controls (9.0581,2.5982) and (9.2465,2.8278) .. (9.3971,3.0776) .. controls (9.6981,3.5772) and (9.8483,4.1501) .. (10.0332,4.7033) .. controls (10.5475,6.2417) and (11.3582,7.6804) .. (12.4077,8.9173);
					\path[fill=black] (3.5766,6.0242) circle (\figuresSmallPointSize);
					\path[fill=black] (11.6073,5.4015) circle (\figuresSmallPointSize);
					\path[fill=black] (5.3590,5.8782) circle (\figuresSmallPointSize) node[above right] {$p_1$};
					\path[fill=black] (10.3947,5.6555) circle (\figuresSmallPointSize) node[above left] {$p_2$};
					\draw (7.8, 1.5) node {$\gamma$};
					\draw (7.8, 4.0) node {$\Delta$};
				\end{tikzpicture}
			\end{center}
		\end{minipage}
		\caption{\label{Figure:gammaReduction0102}Reduce the number of intersections of the shortcut $\gamma$ with edges of the FKD}
	\end{center}
\end{figure}
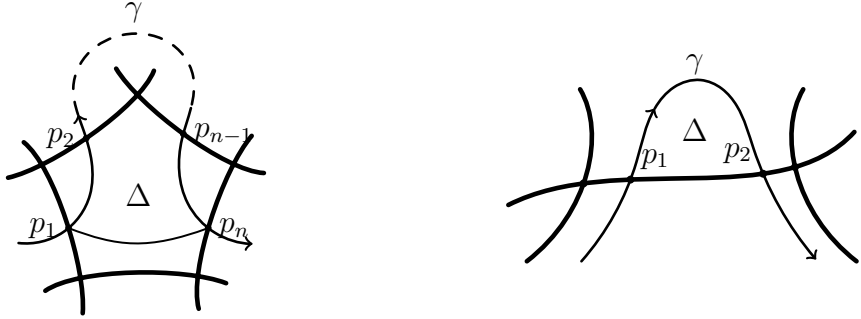

\begin{theorem}
\label{Theorem:GammaMinimum}
Let $F$ be a non-trivial \fkd~with minimal shortcut $\gamma$.
\begin{enumerate}
\item If $\Delta$ is a $\gamma$-region
then $\gamma$  intersects $\partial \Delta$ in exactly two points
which lie inside two distinct $\gamma$-edges.
\item If $F$ is prime and both regions adjacent to an edge $e$ are $\gamma$-regions
then $e$ is the $\gamma$-edge.
\end{enumerate} 
\end{theorem}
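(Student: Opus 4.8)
The plan is to prove both parts by contradiction with the minimality of $\gamma$: assuming a conclusion fails, I will modify $\gamma$ inside one or two closed regions of $S^{2}\setminus F$ to obtain an embedded shortcut $\gamma'$ with $|\Int\gamma'\cap F|<\hh(F)$. I will use two elementary facts: since $F$ is connected, every region of $S^{2}\setminus F$ is an open disk (so innermost/outermost-arc arguments can be run inside $\bar\Delta$), and an outer edge borders only one region — hence the (at most two) regions incident to an endpoint of $F$ are handled separately, reading the statement there with $p_{0}$, $p_{1}$ regarded as lying on their outer edges. The two reductions needed are exactly those drawn in Figure~\ref{Figure:gammaReduction0102}.

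First I would show that a $\gamma$-region $\Delta$ meets $\gamma$ in a single arc. If not, $\gamma$ has an \emph{excursion} out of $\Delta$: a maximal subarc $\varepsilon$ with interior in $S^{2}\setminus\Delta$ lying between two components of $\gamma\cap\Delta$, whose endpoints $q,q'\in\partial\Delta$ are crossings of $\gamma$ with $F$, so $|\varepsilon\cap F|\ge2$. Viewing the components of $\gamma\cap\bar\Delta$ as a disjoint arc system in the disk $\bar\Delta$, an innermost/outermost-arc analysis lets me pick $\varepsilon$ so that $q,q'$ lie on the boundary of a common component $G$ of $\bar\Delta$ cut along the remaining components of $\gamma\cap\Delta$. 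Replacing $\varepsilon$ by an arc $\delta\subset G$ from $q$ to $q'$ with $\Int\delta\subset\Delta$, and removing by a small perturbation the two crossings at $q,q'$ (no longer genuine, since $\gamma'$ now stays on the $\Delta$-side there), yields an embedded shortcut with $|\Int\gamma'\cap F|=|\Int\gamma\cap F|-|\varepsilon\cap F|\le\hh(F)-2$, a contradiction. Granting that $\alpha:=\gamma\cap\Delta$ is a single arc with endpoints $q_{1},q_{2}\in\partial\Delta$, suppose both lie on one edge $e$. Then $e$ cannot border $\Delta$ on both sides — otherwise $\gamma$ would lie in $\Delta$ on both sides of $q_{1}$, producing a second component — so some region $R\ne\Delta$ lies across $e$ and $\gamma\cap e=\{q_{1},q_{2}\}$; the subarc $\beta\subset e$ between $q_{1},q_{2}$ is crossing-free, so replacing $\alpha$ by a thin arc hugging $\beta$ on the $R$-side and again cancelling the crossings at $q_{1},q_{2}$ gives $|\Int\gamma'\cap F|=|\Int\gamma\cap F|-2$, a contradiction. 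Hence $q_{1},q_{2}$ lie on distinct edges, which $\gamma$ crosses and which are therefore distinct $\gamma$-edges, and $\gamma\cap\partial\Delta=\{q_{1},q_{2}\}$; this is part~(1).

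For part~(2), let $e$ have both adjacent regions $\Delta_{1},\Delta_{2}$ $\gamma$-regions and suppose $e$ is not a $\gamma$-edge, so $e$ is not outer and $\gamma\cap e=\emptyset$. If $\Delta_{1}=\Delta_{2}$, a circle meeting $F$ transversely in a single interior point of $e$, closed up inside that common region, cannot bound a regular neighbourhood of an endpoint of $F$ (the endpoints of $e$ are crossings), contradicting primeness~(ii); so $\Delta_{1}\ne\Delta_{2}$ and every interior point of $e$ has $\Delta_{1}$ on one side and $\Delta_{2}$ on the other. By part~(1) each $\gamma\cap\Delta_{i}$ is a single arc $\alpha_{i}$ crossing $e$ nowhere; say $\alpha_{1}$ precedes $\alpha_{2}$ along $\gamma$, and write $\gamma=P\cdot M\cdot Q$ with $P$ ending where $\alpha_{1}$ begins (at $a_{0}$), $Q$ starting where $\alpha_{2}$ ends (at $b_{0}$), and $M$ the middle subarc; by part~(1) the arcs $P,Q$ avoid $\Delta_{1}\cup\Delta_{2}$. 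Replace $M$ by an arc $\delta$ running from $a_{0}$ through $\Delta_{1}$, across $e$ at one point $m$, through $\Delta_{2}$, to $b_{0}$. Then $\gamma':=P\cdot\delta\cdot Q$ is an embedded shortcut, $\delta$ meets $F$ only at $a_{0},m,b_{0}$, while $M$ met $F$ at $a_{0}$, $b_{0}$, and at least two further points (where $\gamma$ leaves $\Delta_{1}$ and enters $\Delta_{2}$), with $m\notin\gamma$; counting crossings gives $|\Int\gamma'\cap F|\le|\Int\gamma\cap F|-1$, contradicting minimality.

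The main obstacle, concentrated entirely in the single-arc step, is to choose the excursion $\varepsilon$ so that the replacement arc $\delta$ can be inserted \emph{disjointly} from the other components of $\gamma\cap\bar\Delta$ — this is where the innermost/outermost-arc analysis in the disk $\bar\Delta$ (together with the fact that the excursions form a non-crossing pattern in the complementary region) does the work. Once a suitable $\varepsilon$ is fixed the reduction is mechanical, and the remaining steps, including the treatment of the regions meeting the endpoints of $F$, are routine.
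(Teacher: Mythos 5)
Your part~(1) and the $\Delta_1=\Delta_2$ half of part~(2) are sound and close in spirit to the paper's argument. (For part~(1) the paper sidesteps your innermost/outermost-arc analysis entirely: it replaces the whole subarc of $\gamma$ between the \emph{first} and the \emph{last} point of $\gamma\cap\partial\Delta$ by a single arc through $\Delta$; since nothing of $\gamma$ outside that subarc meets $\overline{\Delta}$, embeddedness of the new shortcut is automatic and the count drops by at least $n-2$. Your excursion-by-excursion reduction can be made to work, but the existence of an excursion whose two endpoints cobound a complementary component is exactly the point you leave asserted, and it is avoidable.)

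The genuine gap is in part~(2), case $\Delta_1\neq\Delta_2$. Your rerouting saves a crossing only because you count \emph{two} further intersections of $M$ with $F$, ``where $\gamma$ leaves $\Delta_1$ and enters $\Delta_2$.'' These two points coincide precisely when $\Delta_1$ and $\Delta_2$ are consecutive $\gamma$-regions, i.e.\ when $\gamma$ passes directly from $\Delta_1$ into $\Delta_2$ through a common $\gamma$-edge $e'$. In that case $M$ meets $F$ in exactly three points ($a_0$, the crossing point on $e'$, and $b_0$), your $\delta$ also meets $F$ in three points ($a_0$, $m$, $b_0$), and minimality is not contradicted. This residual configuration cannot be excluded by minimality at all; it is exactly the configuration that condition~(i) of primeness forbids: $\Delta_1$ and $\Delta_2$ would then share the two distinct edges $e$ and $e'$, so an embedded circle through one interior point of each meets $F$ transversely in two points and bounds two disks each containing a crossing (the endpoints of $e$ and of $e'$). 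Your proof of part~(2) never invokes condition~(i), which is the condition this statement actually hinges on --- as written, your argument would ``prove'' part~(2) for any FKD satisfying only condition~(ii), which is stronger than the theorem and signals the missing case. Adding the consecutive-regions case, handled via condition~(i) as in the paper, closes the gap.
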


\begin{proof}
{\bf 1}.
By definition of a shortcut  its endpoints are distinct
and lye outside a $\gamma$-region,
the interior of a shortcut intersects with a \fkd~transversely in a finite number of points.
By definition of a $\gamma$-region $\Delta$ is open connected and $\partial \Delta \subset F$.
Hence the number of intersections of $\gamma$ with $\partial \Delta$ is finite and is greater than $1$.
Let $\partial \Delta \cap \gamma =\{p_1,\ldots,p_n\},n \geq 2,$
and let the points $p_1,\ldots,p_n$ are numbered in the order
in which $\gamma$ passes through them.
Suppose $n >2$. In this case
the part of $\gamma$ between $p_1$ and $p_n$ contains at least one intersection with $F$.
This contradicts the minimality of the shortcut $\gamma$.
Indeed, we can replace the part $[p_1,p_n] \subset \gamma$ with a simple arc 
connecting $p_1$ with $p_n$ and lying inside $\Delta$
(see~Fig.~\ref{Figure:gammaReduction0102} on the left),
and resulting shortcut has less intersections with $F$ than the initial one.

Therefore, $\gamma \cap \partial \Delta =\{p_1,p_2\}$
and it remains to show that $p_1,p_2$ can not lie in the same $\gamma$-edge.
Assume, to the contrary,
that there exists an edge $e$ such that $p_1,p_2 \in e$.
If $e$ is an outer edge (say the first one) 
then $p_2$ can not be the other endpoint of the edge $e$
because, by definition,  a shortcut  does not passes through a crossings 
 and, by hypothesis, $F$ is non-trivial.
So we can  decrease the number of intersections of $\gamma$ with $F$ by $1$
connecting $p_1$ with a point lying in $\gamma$ just after $p_2$.
The case $p_1,p_2$ lie in the last edge of $F$ is similar to previous one.
If $e$ is not an outer edge then $\gamma$ crosses $e$ in different directions
(it comes into $\Delta$ in $p_1$ and then goes out in $p_2$)
since otherwise $|\gamma \cap \partial \Delta| \geq 3$.
So we can decrease the number of intersections by $2$ pushing the arc $[p_1,p_2] \subset \gamma$
from $\Delta$ across the edge $e$
(see~Fig.~\ref{Figure:gammaReduction0102} on the right).

{\bf 2}.
If the edge $e$ is an outer edge then it is an $\gamma$-edge by definition.

Let $e$ is not an outer edge. 
Suppose $e$ is not a $\gamma$-edge, i.e., $\gamma \cap e =\emptyset$.
Denote $\gamma$-regions
adjacent to $e$ by $\Delta_1, \Delta_2$.
We consider two cases depending on whether coincide these regions or not.

{\bf $\Delta_1 \not=\Delta_2$}.
The shortcut $\gamma$ is minimal, the regions $\Delta_1,\Delta_2$ are $\gamma$-regions
and, by assumption,  $\gamma \cap e =\emptyset$
hence there exists an $\gamma$-edge $e'$
such that  $\gamma$-regions adjacent to $e'$ are the same $\Delta_1,\Delta_2$.
Thus $\Delta_1$ and $\Delta_2$ have two different common edges.
In the situation there exist  an embedded circle which intersects with $F$ in exactly two points
lying inside $e$ and $e'$.
Both disks bounded by the circle contain crossings of $F$
(the endpoints of $e$ and $e'$).
The existence of such a circle contradicts to condition (i) of the definition of prime \fkd.

{\bf $\Delta_1 =\Delta_2$}.
In particular, it means that the endpoints of $e$ do not coincide.
In this situation there exists an embedded circle which
intersects with $F$ in exactly one point lying in the edge $e$.
Both disks bounded by the circle contain crossings of $F$
(the endpoints of $e$).
The existence of such a circle contradicts to the condition (ii) of the definition of prime \fkd.
\end{proof}

Theorem~\ref{Theorem:GammaMinimum}
has following obvious consequence.

\begin{corollary}
\label{corol:NoLoop}
If an edge $e$ of a \fkd~$F$ with
a minimal shortcut $\gamma$ is a $\gamma$-edge
then $e$ is not a loop.
\end{corollary}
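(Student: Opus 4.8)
The plan is to derive Corollary~\ref{corol:NoLoop} directly from part~1 of Theorem~\ref{Theorem:GammaMinimum}. First I would set up the contradiction: suppose $e$ is a $\gamma$-edge of $F$ which is also a loop, i.e. both endpoints of $e$ are the same crossing $v$ of $F$ (a $4$-valent vertex). Since $e$ is a $\gamma$-edge and $F$ is non-trivial, $e$ is not an outer edge unless one endpoint is an endpoint of $F$, but an outer edge has a univalent vertex at one end and hence cannot be a loop; so $e$ is a non-outer edge with $e \cap \gamma \neq \emptyset$, and its two ends are glued at a single crossing.

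The key observation is that a loop $e$ together with the crossing $v$ forms an embedded circle $C$ in $S^2$ (the loop closes up at $v$). This circle $C$ separates $S^2$ into two open disks. Because $e$ is an edge of the embedded graph $F$, locally near $e$ the graph $F$ lies on both sides of $C$: the two regions of $F$ adjacent along the (interior of the) edge $e$ are on opposite sides of $C$. Call them $\Delta_1$ and $\Delta_2$; they are distinct since they sit in different components of $S^2 \setminus C$. Since $\gamma$ meets $e$, and meeting the interior of $e$ transversally means $\gamma$ passes from $\Delta_1$ to $\Delta_2$ (or vice versa) at that point, both $\Delta_1$ and $\Delta_2$ are $\gamma$-regions.

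Now I invoke Theorem~\ref{Theorem:GammaMinimum}, part~1: for the $\gamma$-region $\Delta_1$, the shortcut $\gamma$ meets $\partial\Delta_1$ in exactly two points lying in two \emph{distinct} $\gamma$-edges of $F$. But I claim $\partial\Delta_1$, as a subset of $F$, has the interior of $e$ appearing on it, and I must rule out the degenerate possibility that both intersection points of $\gamma$ with $\partial\Delta_1$ lie on $e$ — which is exactly what part~1 forbids, since they must lie in two distinct edges. So $\gamma$ crosses $e$ at most once going into $\Delta_1$. Running the same argument with $\Delta_2$, and tracking that $\gamma$ enters and leaves along $e$, one pins down that $\gamma$ can cross $e$ only once, forcing one of the two disks cut off by $C$ to contain an endpoint of $\gamma$ that $\gamma$ cannot reach without recrossing $C$ — i.e. recrossing an edge of $F$ — contradicting minimality. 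The main obstacle, and the step needing the most care, is the bookkeeping of exactly how $\partial\Delta_1$ and $\partial\Delta_2$ use the loop edge $e$ and the crossing $v$: a loop edge can be adjacent to the \emph{same} region on both sides (if the other two germs at $v$ point into yet other regions), and then $\partial\Delta_1$ traverses $e$ twice, which is precisely the configuration that part~1 of Theorem~\ref{Theorem:GammaMinimum} excludes. Making that dichotomy explicit and showing each branch contradicts Theorem~\ref{Theorem:GammaMinimum} is the crux; everything else is a routine planarity argument about the separating circle $C$.
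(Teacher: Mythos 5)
Your opening moves are sound and are essentially what the ``obvious'' derivation from Theorem~\ref{Theorem:GammaMinimum} uses: an outer edge ends at a univalent vertex and so cannot be a loop, hence a loop $\gamma$-edge $e$ would have to meet $\gamma$; the closure $C=\bar e$ is an embedded circle; the two regions adjacent to $e$ lie in different complementary disks of $C$ and are therefore distinct $\gamma$-regions; and since the whole of $\operatorname{Int}e$ lies in $\partial\Delta_1$ while Theorem~\ref{Theorem:GammaMinimum}(1) allows only one of the two points of $\gamma\cap\partial\Delta_1$ to lie on $e$, the shortcut crosses $e$ exactly once. Two things go wrong after that. First, your concluding contradiction is not actually established. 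Knowing that $\gamma$ crosses $C$ exactly once only tells you that the two ends of $\gamma$ lie in different complementary disks of $C$; to get a contradiction you must show that the two endpoints of $F$ lie in the \emph{same} disk. That is true, but it needs the local picture at the crossing $v$: the two germs of $e$ at $v$ come from the two different strands of the transversal double point, so they are adjacent (not opposite) in the cyclic order around $v$; hence the remaining two germs point into the same complementary disk, and $F\setminus\operatorname{Int}e$, being connected and meeting $C$ only at $v$, lies entirely in that disk together with both endpoints of $F$. You never state or prove this, and without it a single crossing of $C$ is perfectly consistent with $\gamma$ being a shortcut. Note also that the resulting contradiction is purely topological (an arc with both ends in one complementary disk of a circle meets the circle an even number of times); it is not a contradiction ``with minimality'', and ``recrossing an edge of $F$'' plays no role once $|\gamma\cap e|=1$ is known.

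Second, the point you single out as ``the crux'' is not an issue, and raising it contradicts your own correct earlier step: for a \emph{loop} edge the two adjacent regions can never coincide, precisely because they lie in different components of $S^2\setminus C$ (coincidence of the two sides is possible only for non-loop edges), and in any case such a coincidence would be excluded by planar topology, not by Theorem~\ref{Theorem:GammaMinimum}(1). The local analysis you are missing also yields a much shorter argument, which is presumably why the paper states the corollary without proof: since $F\setminus\operatorname{Int}e$ lies in one complementary disk of $C$, the other disk $D$ is disjoint from $F$, so $D$ is itself a region of $F$ with $\partial D\subset\bar e$; it is a $\gamma$-region because $\gamma$ crosses $e$; and Theorem~\ref{Theorem:GammaMinimum}(1) fails immediately for $D$, since the two points of $\gamma\cap\partial D$ would both have to lie in the single edge $e$ rather than in two distinct $\gamma$-edges.
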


\begin{remark} \label{rem:numbering}
Theorem~\ref{Theorem:GammaMinimum} implies that the minimal shortcut $\gamma$ of a prime \fkd~$F$
traverses through $\gamma$-regions sequentially one-by-one
without coming back to already visited  regions.
Going from a $\gamma$-region to the next one
$\gamma$ crosses a $\gamma$-edge
which is the only common edge of these two regions.
Therefore, since by definition of the height the interior of $\gamma$ crosses $F$ exactly $\hh(F)$ times,
there are $\hh(F)+1$ pairwise distinct $\gamma$-regions.
Each of them, except  the first one and the last one (the two coincide for \fkd~of the height $0$),
has common edges with two other $\gamma$-regions,
while the first and the last $\gamma$-regions (if they are distinct)
has common edge with one other $\gamma$-region only.
Therefore, there is a natural numbering of $\gamma$-regions of a prime \fkd~$F$ with fixed minimal shortcut $\gamma$:
$\Delta_0,\Delta_1,\ldots,\Delta_{\hh(F)}$,
where $\Delta_0$ and $\Delta_{\hh(F)}$
are $\gamma$-regions adjacent to the beginning and to the end of $F$, respectively.
All other $\gamma$-regions are numbered from $1$ to $\hh(F)-1$
in accordance  with the order in which the shortcut $\gamma$
traverses the regions.
Below we will refer to the numbering as the \emph{canonical numbering}
and to the corresponding numbers of $\gamma$-regions as the their \emph{canonical numbers}.
\end{remark}

\subsection{Types of crossings of flat knotoid diagram}
\label{sec:TypesOfCrossings}

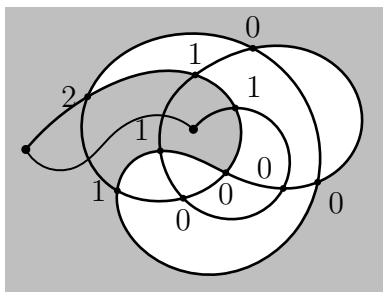
\begin{figure}[h]
	\begin{center}
		\begin{tikzpicture}[scale=0.3, yscale=-1.0]
			\path[fill=lightgray,line join=round,line cap=round,miter limit=4.00,line width=0.148pt] (1.9988,0.1740) -- (18.7319,0.1740) -- (18.7319,12.7818) -- (1.9988,12.7818) -- cycle;
			\path[fill=white, line width=0.0] (5.6629,4.1391) .. controls (5.6629,4.1391) and (5.8707,3.5535) .. (6.4335,2.9351) .. controls (6.7550,2.5818) and (6.9704,2.3302) .. (7.9380,1.8390) .. controls (8.9056,1.3478) and (10.9630,1.3394) .. (11.6007,1.5015) .. controls (12.3315,1.6872) and (13.0258,2.0140) .. (13.0258,2.0140) .. controls (13.0258,2.0140) and (14.5009,1.7390) .. (15.4135,2.0765) .. controls (16.3260,2.4140) and (17.4136,3.2516) .. (17.7386,4.5142) .. controls (18.0636,5.7767) and (17.5011,6.4143) .. (17.3136,6.7768) .. controls (17.1261,7.1393) and (16.7114,7.4752) .. (15.8510,7.9894) .. controls (15.6260,8.7644) and (14.9459,10.1261) .. (14.3948,10.6770) .. controls (13.7259,11.3458) and (12.2107,12.1151) .. (11.0320,12.0418) .. controls (9.8534,11.9684) and (8.3321,11.3781) .. (7.6130,10.4270) .. controls (6.8504,9.4186) and (6.8113,8.3002) .. (7.2563,7.5152) .. controls (7.6536,6.8143) and (8.1197,6.5341) .. (9.0935,6.5695) .. controls (10.3348,6.6145) and (11.8132,7.5393) .. (11.8132,7.5393) .. controls (11.8132,7.5393) and (12.3477,6.7680) .. (12.4477,6.1804) .. controls (12.5477,5.5929) and (12.2309,4.4172) .. (11.7507,3.9891) .. controls (11.3062,3.5929) and (10.8625,3.2340) .. (10.0163,3.0750) .. controls (9.1700,2.9161) and (8.8594,3.0016) .. (7.7718,3.2391) .. controls (6.6877,3.4758) and (5.6629,4.1391) .. (5.6629,4.1391) -- cycle;
			\path[draw=black,line cap=round,line width=\figuresLineThickness] (2.9341,6.4976) .. controls (3.7147,5.5594) and (4.6587,4.7579) .. (5.7095,4.1372);
			\path[draw=black,line cap=round,line width=\figuresLineThickness] (5.7095,4.1372) .. controls (5.9721,3.9821) and (6.2415,3.8382) .. (6.5165,3.7064) .. controls (7.5772,3.1980) and (8.7698,2.8640) .. (9.9286,3.0655) .. controls (10.5080,3.1663) and (11.0683,3.4033) .. (11.5191,3.7810) .. controls (11.7994,4.0159) and (12.0355,4.3065) .. (12.2026,4.6312);
			\path[draw=black,line cap=round,line width=\figuresLineThickness] (12.2026,4.6312) .. controls (12.3042,4.8286) and (12.3804,5.0387) .. (12.4255,5.2564) .. controls (12.5141,5.6841) and (12.4824,6.1343) .. (12.3483,6.5499) .. controls (12.2142,6.9656) and (11.9792,7.3467) .. (11.6782,7.6632) .. controls (11.0763,8.2963) and (10.2237,8.6579) .. (9.3562,8.7613) .. controls (8.3859,8.8771) and (7.3522,8.6679) .. (6.5898,8.0566) .. controls (6.1172,7.6778) and (5.7634,7.1559) .. (5.5731,6.5809) .. controls (5.3829,6.0059) and (5.3551,5.3799) .. (5.4799,4.7872) .. controls (5.6538,3.9618) and (6.1212,3.2097) .. (6.7496,2.6470) .. controls (7.3780,2.0843) and (8.1615,1.7069) .. (8.9823,1.5124) .. controls (10.0748,1.2536) and (11.2427,1.3164) .. (12.2979,1.6999) .. controls (13.3531,2.0834) and (14.2908,2.7874) .. (14.9459,3.6992) .. controls (15.6010,4.6109) and (15.9693,5.7273) .. (15.9760,6.8500) .. controls (15.9782,7.2280) and (15.9393,7.6061) .. (15.8615,7.9760);
			\path[draw=black,line cap=round,line width=\figuresLineThickness] (15.8615,7.9760) .. controls (15.7081,8.7047) and (15.4037,9.4018) .. (14.9656,10.0039) .. controls (14.1605,11.1104) and (12.8915,11.8831) .. (11.5307,12.0270) .. controls (10.1699,12.1708) and (8.7418,11.6629) .. (7.8229,10.6491) .. controls (7.3484,10.1257) and (7.0063,9.4601) .. (6.9683,8.7548) .. controls (6.9493,8.4021) and (7.0072,8.0438) .. (7.1498,7.7207) .. controls (7.2924,7.3976) and (7.5208,7.1107) .. (7.8122,6.9111) .. controls (8.1189,6.7011) and (8.4879,6.5920) .. (8.8592,6.5679);
			\path[draw=black,line cap=round,line width=\figuresLineThickness] (8.8592,6.5679) .. controls (8.8771,6.5667) and (8.8950,6.5658) .. (8.9129,6.5650) .. controls (9.3022,6.5483) and (9.6915,6.6209) .. (10.0623,6.7406) .. controls (10.8039,6.9799) and (11.4714,7.4022) .. (12.1848,7.7157) .. controls (13.0108,8.0786) and (13.9185,8.2960) .. (14.8163,8.2062) .. controls (15.5098,8.1369) and (16.1892,7.8796) .. (16.7299,7.4399) .. controls (17.2706,7.0001) and (17.6659,6.3746) .. (17.7921,5.6891) .. controls (17.9380,4.8969) and (17.7158,4.0547) .. (17.2417,3.4034) .. controls (16.7675,2.7521) and (16.0552,2.2896) .. (15.2812,2.0661) .. controls (14.5072,1.8427) and (13.6754,1.8510) .. (12.8907,2.0335) .. controls (12.1060,2.2160) and (11.3668,2.5683) .. (10.6962,3.0146) .. controls (10.1349,3.3881) and (9.6080,3.8404) .. (9.2718,4.4247) .. controls (8.8233,5.2041) and (8.7555,6.1704) .. (8.9920,7.0379) .. controls (9.2065,7.8244) and (9.6711,8.5511) .. (10.3341,9.0255) .. controls (10.9970,9.4998) and (11.8607,9.7052) .. (12.6545,9.5198) .. controls (13.1100,9.4134) and (13.5360,9.1819) .. (13.8710,8.8554) .. controls (14.2059,8.5290) and (14.4485,8.1081) .. (14.5601,7.6539) .. controls (14.6716,7.1996) and (14.6514,6.7134) .. (14.4995,6.2710) .. controls (14.3477,5.8286) and (14.0643,5.4318) .. (13.6931,5.1471) .. controls (13.1984,4.7676) and (12.5494,4.5956) .. (11.9316,4.6801) .. controls (11.3138,4.7647) and (10.7349,5.1049) .. (10.3604,5.6034);
			\path[draw=black,line cap=round,line width=\figuresThinLineThickness] (2.9341,6.4976) .. controls (3.1272,6.8649) and (3.4572,7.1582) .. (3.8446,7.3069) .. controls (4.2321,7.4555) and (4.6736,7.4582) .. (5.0628,7.3143) .. controls (5.4402,7.1748) and (5.7570,6.9083) .. (6.0383,6.6207) .. controls (6.3197,6.3331) and (6.5746,6.0189) .. (6.8754,5.7517) .. controls (7.1350,5.5211) and (7.4320,5.3271) .. (7.7512,5.1908);
			\path[draw=black,line cap=round,line width=\figuresThinLineThickness] (7.7512,5.1908) .. controls (8.0456,5.0652) and (8.3588,4.9888) .. (8.6789,4.9783) .. controls (9.2918,4.9582) and (9.9094,5.1878) .. (10.3604,5.6034);
			\path[fill=black] (2.9341,6.4976) circle (\figuresPointSize);
			\path[fill=black] (10.3604,5.6034) circle (\figuresPointSize);
			\path[fill=black] (5.6735,4.1586) circle (\figuresSmallPointSize) node[left] {2};
			\path[fill=black] (12.2168,4.6592) circle (\figuresSmallPointSize) node[above right] {1};
			\path[fill=black] (15.8675,7.9471) circle (\figuresSmallPointSize) node[below right] {0};
			\path[fill=black] (8.8937,6.5573) circle (\figuresSmallPointSize) node[above left] {1};
			\path[fill=black] (12.9974,2.0098) circle (\figuresSmallPointSize) node[above] {0};
			\path[fill=black] (10.4405,3.1925) circle (\figuresSmallPointSize) node[above] {1};
			\path[fill=black] (11.7979,7.5299) circle (\figuresSmallPointSize) node[below] {0};
			\path[fill=black] (14.3415,8.2260) circle (\figuresSmallPointSize) node[above left] {0};
			\path[fill=black] (9.9079,8.6596) circle (\figuresSmallPointSize) node[below] {0};
			\path[fill=black] (6.9892,8.3275) circle (\figuresSmallPointSize) node[left] {1};
		\end{tikzpicture}
		\caption{\label{Figure:VertexTypeExample}FKD and types of all it crossings}
	\end{center}
\end{figure}

Let  $x$ is a crossing of \fkd~$F$ with a shortcut $\gamma$.
We will say that the crossing $x$ \emph{has the type $n$}, $0 \leq n \leq 4$,
if $x$ is adjacent to exactly $n$ $\gamma$-edges (counted with multiplicity).
Clearly, the type of a crossing depends on the choice  of a shortcut.
Denote by $c_n(F,\gamma)$ the number of crossings of \fkd~$F$ having the type $n$
with respect to the shortcut $\gamma$.
In Fig.~\ref{Figure:VertexTypeExample} 
we draw a \fkd~and indicate types of all its crossings.

\begin{figure}[h]
	\begin{center}
		\begin{minipage}{0.12\textwidth}
			\begin{center}
				\begin{tikzpicture}[scale=0.4, yscale=-1.0]
					\path[draw=black,line cap=round,line width=\figuresLineThickness] (2.8640,1.2366) -- (2.8640,4.8078);
					\path[draw=black,line cap=round,line width=\figuresLineThickness] (4.6495,3.0222) -- (1.0784,3.0222);
					\path[fill=black] (2.8640,3.0222) circle (\figuresSmallPointSize) node[below left] {0};
				\end{tikzpicture}
			\end{center}
		\end{minipage}
		\hfill
		\begin{minipage}{0.12\textwidth}
			\begin{center}
				\begin{tikzpicture}[scale=0.4, yscale=-1.0]
					\path[scale=-1.000,fill=lightgray] (-9.1943,-7.0975)arc(180.000:270.000:1.794) -- (-7.4005,-7.0975) -- cycle;
					\path[draw=black,line cap=round,line width=\figuresLineThickness] (7.4005,5.3119) -- (7.4005,8.8830);
					\path[draw=black,line cap=round,line width=\figuresLineThickness] (9.1860,7.0975) -- (5.6149,7.0975);
					\path[fill=black] (7.4005,7.0975) circle (\figuresSmallPointSize) node[below left] {0};
				\end{tikzpicture}
			\end{center}
		\end{minipage}
		\hfill
		\begin{minipage}{0.12\textwidth}
			\begin{center}
				\begin{tikzpicture}[scale=0.4, yscale=-1.0]
					\path[scale=-1.000,fill=lightgray] (-9.1943,-3.0222)arc(180.000:270.000:1.794) -- (-7.4005,-3.0222) -- cycle;
					\path[fill=lightgray] (5.6066,3.0222)arc(180.000:270.000:1.794) -- (7.4005,3.0222) -- cycle;
					\path[draw=black,line cap=round,line width=\figuresLineThickness] (7.4005,1.2367) -- (7.4005,4.8078);
					\path[draw=black,line cap=round,line width=\figuresLineThickness] (9.1860,3.0222) -- (5.6149,3.0222);
					\path[fill=black] (7.4005,3.0222) circle (\figuresSmallPointSize) node[below left] {0};
				\end{tikzpicture}
			\end{center}
		\end{minipage}
		\hfill
		\begin{minipage}{0.12\textwidth}
			\begin{center}
				\begin{tikzpicture}[scale=0.4, yscale=-1.0]
					\path[scale=-1.000,fill=lightgray] (-13.2339,-3.0222)arc(180.000:270.000:1.794) -- (-11.4401,-3.0222) -- cycle;
					\path[rotate=90.0,fill=lightgray] (1.2283,-11.4401)arc(180.000:270.000:1.794) -- (3.0222,-11.4401) -- cycle;
					\path[draw=black,line cap=round,line width=\figuresLineThickness] (11.4401,1.2367) -- (11.4401,4.8078);
					\path[draw=black,line cap=round,line width=\figuresThickLineThickness] (13.2256,3.0222) -- (11.4507,3.0222);
					\path[draw=black,line cap=round,line width=\figuresLineThickness] (11.4507,3.0222) -- (9.6545,3.0222);
					\path[fill=black] (11.4401,3.0222) circle (\figuresSmallPointSize) node[below left] {1};
				\end{tikzpicture}
			\end{center}
		\end{minipage}
		\hfill
		\begin{minipage}{0.12\textwidth}
			\begin{center}
				\begin{tikzpicture}[scale=0.4, yscale=-1.0]
					\path[fill=lightgray] (13.7965,3.0222)arc(180.000:270.000:1.794) -- (15.5903,3.0222) -- cycle;
					\path[scale=-1.000,fill=lightgray] (-17.3735,-3.0222)arc(180.000:270.000:1.794) -- (-15.5797,-3.0222) -- cycle;
					\path[rotate=90.0,fill=lightgray] (1.2283,-15.5797)arc(180.000:270.000:1.794) -- (3.0222,-15.5797) -- cycle;
					\path[draw=black,line cap=round,line width=\figuresThickLineThickness] (15.5797,1.2366) -- (15.5797,3.0141);
					\path[draw=black,line cap=round,line width=\figuresLineThickness] (15.5797,3.0141) -- (15.5797,4.8078);
					\path[draw=black,line cap=round,line width=\figuresThickLineThickness] (17.3652,3.0222) -- (15.5903,3.0222);
					\path[draw=black,line cap=round,line width=\figuresLineThickness] (15.5903,3.0222) -- (13.7941,3.0222);
					\path[fill=black] (15.5797,3.0222) circle (\figuresSmallPointSize) node[below left] {2};
				\end{tikzpicture}
			\end{center}
		\end{minipage}
		\hfill
		\begin{minipage}{0.12\textwidth}
			\begin{center}
				\begin{tikzpicture}[scale=0.4, yscale=-1.0]
					\path[rotate=90.0,fill=lightgray] (1.2202,-19.6086)arc(180.000:-180.000:1.794) -- (3.0141,-19.6086) -- cycle;
					\path[draw=black,line cap=round,line width=\figuresThickLineThickness] (19.6086,1.2285) -- (19.6086,3.0059);
					\path[draw=black,line cap=round,line width=\figuresThickLineThickness] (19.6086,3.0059) -- (19.6086,4.7996);
					\path[draw=black,line cap=round,line width=\figuresThickLineThickness] (21.3942,3.0141) -- (19.6193,3.0141);
					\path[draw=black,line cap=round,line width=\figuresThickLineThickness] (19.6193,3.0141) -- (17.8230,3.0141);
					\path[fill=black] (19.6086,3.0141) circle (\figuresSmallPointSize) node[below left] {4};
				\end{tikzpicture}
			\end{center}
		\end{minipage}
		\caption{\label{Figure:VertexTypesNeighborhoods}All types of crossing neighbourhoods. $\gamma$-areas are shaded and $\gamma$-edges are drawn by thick lines}
	\end{center}
\end{figure}
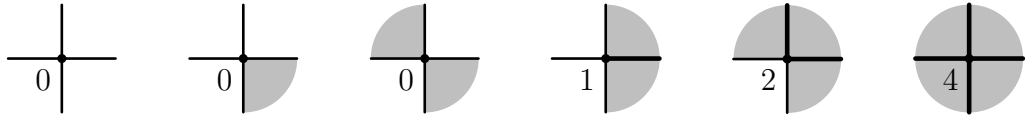

In Fig.~\ref{Figure:VertexTypesNeighborhoods}
we draw a neighbourhoods  of a crossings of all possible types
(as we explain  below, the type~$3$ is impossible).

\begin{lemma}
\label{Lemma:NoFourType}
If $F$ is a prime \fkd~with a minimal
shortcut $\gamma$ then
$c_3(F, \gamma) = c_4(F, \gamma) = 0$.
\end{lemma}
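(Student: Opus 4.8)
The plan is to reduce the statement to a simple constraint on the canonical numbering $\Delta_0,\dots,\Delta_{\hh(F)}$ of the $\gamma$-regions supplied by Remark~\ref{rem:numbering}, and then to handle type~$3$ by one appeal to Theorem~\ref{Theorem:GammaMinimum}(2) and type~$4$ by a parity argument. We may assume $F$ is non-trivial, since otherwise it has no crossings; thus Theorem~\ref{Theorem:GammaMinimum} and Remark~\ref{rem:numbering} apply.

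First I would record three observations about $F$ and $\gamma$. (a) An outer edge $e$ joins an endpoint $v$ of $F$ to a crossing, and $e$ has the same region on both of its sides, namely the $\gamma$-region ($\Delta_0$ or $\Delta_{\hh(F)}$) adjacent to $v$: a small circle about $v$ meets $F$ in one point, so exactly one region touches $v$, and the region on a fixed side of an edge is constant along that edge. By condition~(ii) of primality, a non-outer edge therefore has two distinct regions on its sides. (b) By Remark~\ref{rem:numbering} the $\gamma$-edges of $F$ are exactly the two outer edges together with the transit edges $g_1,\dots,g_{\hh(F)}$, where $g_k$ is the unique common edge of $\Delta_{k-1}$ and $\Delta_k$, and every point of $\Int\gamma\cap F$ is a transit through some $g_k$. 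Hence every $\gamma$-region has at most two $\gamma$-edges on its boundary: $g_k$ borders $\Delta_j$ only for $j\in\{k-1,k\}$, and an outer edge borders only $\Delta_0$ or $\Delta_{\hh(F)}$. (c) If $e$ is a $\gamma$-edge then both regions adjacent to $e$ are $\gamma$-regions: for non-outer $e$ because $\gamma$ meets the interior of $e$ and hence both sides of $e$, and for outer $e$ by~(a).

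Now let $x$ be a crossing of type $n\ge 3$. If two of the four edge-ends at $x$ lay on one edge, that edge would be a loop at $x$, hence not a $\gamma$-edge by Corollary~\ref{corol:NoLoop}, forcing $n\le 2$. So the edge-ends at $x$ lie on four distinct edges $e_1,e_2,e_3,e_4$ in cyclic order; write $c_{12},c_{23},c_{34},c_{41}$ for the corresponding corners and $\rho(c)$ for the region containing the corner $c$. At least three of the $e_i$ are $\gamma$-edges, so by~(c) all four of $\rho(c_{12}),\rho(c_{23}),\rho(c_{34}),\rho(c_{41})$ are $\gamma$-regions. If $n=3$, say (after cyclic relabelling) $e_4$ is not a $\gamma$-edge; the two regions adjacent to $e_4$ are $\rho(c_{34})$ and $\rho(c_{41})$, both $\gamma$-regions, so by Theorem~\ref{Theorem:GammaMinimum}(2) the edge $e_4$ must be a $\gamma$-edge, a contradiction. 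Hence $c_3(F,\gamma)=0$. If $n=4$, all $e_i$ are $\gamma$-edges, and none is outer: were $e_1$ outer, then by~(a) the single region $\rho(c_{12})=\rho(c_{41})$ would carry the three distinct $\gamma$-edges $e_1,e_2,e_4$ on its boundary, contradicting~(b). So each $e_i$ is a non-outer $\gamma$-edge, hence is crossed by $\gamma$, and by~(b) that crossing is a transit between the two regions adjacent to $e_i$, which are therefore consecutive in the canonical numbering. Writing $a,b,c,d$ for the canonical numbers of $\rho(c_{12}),\rho(c_{23}),\rho(c_{34}),\rho(c_{41})$ we get $|a-b|=|b-c|=|c-d|=|d-a|=1$; since the four increments sum to zero, a short computation gives $a=c$ or $b=d$, so two opposite corners of $x$ lie in one $\gamma$-region $\Delta$. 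But then $e_1,e_2,e_3,e_4$ all lie on $\partial\Delta$, so $\Delta$ carries four $\gamma$-edges, contradicting~(b). Hence $c_4(F,\gamma)=0$.

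The part I expect to need the most care is observation~(b), especially the claim that a $\gamma$-region has at most two $\gamma$-edges on its boundary: it rests on the pairwise distinctness of the $\gamma$-regions in Remark~\ref{rem:numbering} and on the small-circle argument in~(a), and it must be combined with careful treatment of the degenerate cases (a loop among the edges at $x$, coincidences among the corner regions of $x$, or $\hh(F)$ small). The underlying point-set topology — circles around endpoints, constancy of the side-region along an edge, and the fact that every intersection of a minimal shortcut with $F$ is a transit step — is routine but is precisely where the primality hypotheses enter.
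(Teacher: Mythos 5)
Your proof is correct, and for the two halves of the lemma it relates to the paper's proof in different ways. For $c_3(F,\gamma)=0$ you and the paper argue essentially identically: three $\gamma$-edges at $x$ force every corner region at $x$ to be a $\gamma$-region, so both regions adjacent to the fourth edge are $\gamma$-regions and Theorem~\ref{Theorem:GammaMinimum}(2) forces that edge to be a $\gamma$-edge as well. For $c_4(F,\gamma)=0$ you take a genuinely different route. The paper's argument is local and geometric: it labels the four points $p_i=\gamma\cap e_i$ in the order $\gamma$ traverses them and asserts that ``in all possible situations'' $p_1$ can be joined to $p_3$ or $p_4$ by an arc missing $F$, so $\gamma$ is not minimal; the case analysis behind that assertion is left to the reader. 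You instead upgrade Remark~\ref{rem:numbering} to the structural statement that the $\gamma$-edges are exactly the two outer edges plus one transit edge between each pair of consecutive $\gamma$-regions, so that every $\gamma$-region carries at most two $\gamma$-edges on its boundary, and then run a parity computation on the canonical numbers of the four corner regions at $x$ (four increments of $\pm 1$ summing to zero force two opposite corners into one region, which then carries four $\gamma$-edges). This buys a fully checkable combinatorial contradiction in place of the paper's unproved geometric claim, at the cost of leaning on the ``only common edge'' part of Remark~\ref{rem:numbering} --- which the paper also states without proof, but which follows from primality condition~(i) exactly as in the $\Delta_1\ne\Delta_2$ case of the proof of Theorem~\ref{Theorem:GammaMinimum}(2). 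Your preliminary observations (a)--(c), including the exclusion of outer edges at a type-$4$ crossing, are sound and correctly flagged as the places where primality and minimality enter.
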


\begin{proof}
Let $x$ be a crossing of $F$.
Suppose $x$ is of the type~$3$.
Then, by definition, exactly $3$ of edges adjacent to $x$ are $\gamma$-edges
while the fourth edge (denote it by $e$) is not.
By the second statement of Theorem~\ref{Theorem:GammaMinimum}
at least one of regions adjacent to $e$ is not a $\gamma$-region.
Hence at least one of regions adjacent to the crossing $x$ is not a $\gamma$-region.
Hence, again by the second statement of Theorem~\ref{Theorem:GammaMinimum},
at least $2$ of edges adjacent to $x$ (counted with multiplicity)
are not $\gamma$-edges,
this contradicts our assumption that  $x$ is of the type~$3$.

Suppose a crossing $x$ is of the type~$4$.
Denote by $e_1,e_2,e_3,e_4$ edges adjacent to $x$.
Then, by definition, all these edges are $\gamma$-edges
and by Corollary~\ref{corol:NoLoop}
no one of them is a loop.
Denote by  $p_i =\gamma \cap e_i,i=1,\ldots,4,$
and let $e_i$ and $p_i$ are numbered in the order
in which $\gamma$ goes through these points.
Note that in all possible situations
we can connect $p_1$ either with $p_3$ or with $p_4$
by an arc not intersecting $F$. Hence the shortcut $\gamma$ is not minimal. This is contradicting to the hypothesis of the lemma.
\end{proof}

\begin{theorem}
\label{Theorem:Equivalence}
Let $F$ be a prime \fkd~with a minimal shortcut $\gamma$. Then following inequalities are equivalent:
\begin{equation} \label{eq:cr geq 2h}
\crn(F) \geq 2 \hh(F)
\end{equation}
and
\begin{equation} \label{eq:c_0 geq c_2}
c_0(F, \gamma) +2 \geq c_2(F, \gamma).
\end{equation}
\end{theorem}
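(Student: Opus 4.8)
The plan is to reduce the claimed equivalence to a single counting identity tying the numbers $c_0(F,\gamma),c_1(F,\gamma),c_2(F,\gamma)$ to $\hh(F)$, after which both inequalities are linear rearrangements of one another. If $F$ is trivial both inequalities hold and there is nothing to prove, so assume $F$ is non-trivial. By Lemma~\ref{Lemma:NoFourType} we have $c_3(F,\gamma)=c_4(F,\gamma)=0$, hence every crossing of $F$ has type $0$, $1$, or $2$ and
$\crn(F)=c_0(F,\gamma)+c_1(F,\gamma)+c_2(F,\gamma)$. The goal is then to prove $c_1(F,\gamma)+2c_2(F,\gamma)=2\hh(F)+2$.

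To get that identity I would double count, with multiplicity, the incidences ``(crossing $x$, an end at $x$ belonging to a $\gamma$-edge)''. Counting by crossings yields $\sum_n n\,c_n(F,\gamma)=c_1(F,\gamma)+2c_2(F,\gamma)$, again using Lemma~\ref{Lemma:NoFourType}. Counting by $\gamma$-edges: an outer $\gamma$-edge has exactly one end at a crossing (the other end is an endpoint of $F$, and $F$ is non-trivial), whereas a non-outer $\gamma$-edge — which is not a loop by Corollary~\ref{corol:NoLoop} — has both ends at crossings. There are exactly two outer $\gamma$-edges, so the identity follows once we show that there are exactly $\hh(F)$ non-outer $\gamma$-edges, i.e. exactly $\hh(F)$ non-outer edges met by $\Int\gamma$.

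For this last count I would invoke Theorem~\ref{Theorem:GammaMinimum} and Remark~\ref{rem:numbering}: the minimal shortcut $\gamma$ passes through the $\hh(F)+1$ distinct $\gamma$-regions $\Delta_0,\dots,\Delta_{\hh(F)}$ in order, and between consecutive regions $\Delta_{i-1}$ and $\Delta_i$ it crosses their unique common edge exactly once. Since an edge of $F$ is adjacent to at most two regions, no edge can be the transition edge for two different indices $i$, so these are $\hh(F)$ pairwise distinct edges, each met exactly once; as $|\Int\gamma\cap F|=\hh(F)$, these are all the edges met by $\gamma$. Finally, none of them is an outer edge: an outer edge $e$ carries a univalent endpoint of $F$ and has no vertex of $F$ in its interior, so one can go around that endpoint and conclude that both sides of $e$ lie in a single region (necessarily $\Delta_0$ or $\Delta_{\hh(F)}$); crossing $e$ therefore would not change the region containing $\gamma$, and such an intersection point could be deleted by rerouting $\gamma$ within that region, contradicting minimality (this is the same type of surgery used in the proof of Theorem~\ref{Theorem:GammaMinimum}). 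Hence exactly $\hh(F)$ non-outer edges are met, and $c_1(F,\gamma)+2c_2(F,\gamma)=2\cdot\hh(F)+2\cdot 1=2\hh(F)+2$.

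Combining, $\crn(F)=c_0(F,\gamma)+c_2(F,\gamma)+\bigl(2\hh(F)+2-2c_2(F,\gamma)\bigr)=c_0(F,\gamma)-c_2(F,\gamma)+2\hh(F)+2$, so $\crn(F)-2\hh(F)=\bigl(c_0(F,\gamma)+2\bigr)-c_2(F,\gamma)$; thus $\crn(F)\geq 2\hh(F)$ holds if and only if $c_0(F,\gamma)+2\geq c_2(F,\gamma)$, which is exactly the asserted equivalence. I expect the only delicate point to be that last step of the count — excluding that a minimal shortcut traverses an outer edge, and keeping careful track of multiplicities (loops, edges crossed more than once) in the incidence argument; everything else is bookkeeping built on Theorem~\ref{Theorem:GammaMinimum}, Lemma~\ref{Lemma:NoFourType}, and Remark~\ref{rem:numbering}.
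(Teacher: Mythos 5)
Your proof is correct and takes essentially the same route as the paper: both reduce the equivalence to the identity $c_1(F,\gamma)+2c_2(F,\gamma)=2\hh(F)+2$, obtained by double-counting incidences between crossings and ends of $\gamma$-edges, using Lemma~\ref{Lemma:NoFourType} and Corollary~\ref{corol:NoLoop}. The additional care you take in checking that there are exactly $\hh(F)$ non-outer $\gamma$-edges, each met once, and that a minimal shortcut never meets an outer edge (plus setting aside the trivial \fkd) only makes explicit what the paper's count leaves implicit.
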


\begin{proof}
By lemma~\ref{Lemma:NoFourType},
$$\crn(F) = c_0(F, \gamma) + c_1(F, \gamma) + c_2(F,\gamma).$$
By Corollary~\ref{corol:NoLoop}
a $\gamma$-edge has distinct endpoints.
Hence the total number of crossings which are endpoint of $\gamma$-edges
is equal to
$2\cdot \hh(F) + 2$
(here the first term corresponds to non-outer $\gamma$-edges, the second one corresponds to two outer edges).
On the  other hand, the same number is equal to
$c_1(F, \gamma) + 2 c_2(F, \gamma)$.
Therefore,
$$2 \hh(F) + 2 = c_1(F, \gamma) + 2 c_2(F, \gamma).$$
Subtracting the last equality from previous one we obtain
$$\crn(F) - 2 \hh(F) -2 = c_0(F, \gamma) -c_2(F, \gamma),$$
hence
$$\crn(F) - 2 \hh(F) = c_0(F, \gamma) + 2 -c_2(F, \gamma).$$
This completes the proof, because the left-hand side of the equality
is equal to the difference between the left-hand side and the right-hand side
of the inequality ~\eqref{eq:cr geq 2h},
while the right-hand side is similarly  connected with the inequality~\eqref{eq:c_0 geq c_2}.
\end{proof}

\subsection{The left and the right border edges}
\label{sec:BorderEdges}

Given a \fkd~$F$ with a shortcut $\gamma$,
an edge $e$ of $F$ is called a \emph{border edge}
if one of regions adjacent to $e$ is a $\gamma$-region
while the other one is not.

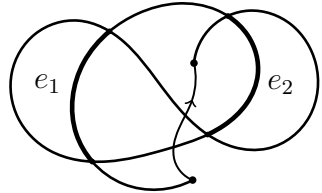
\begin{figure}[h]
	\begin{center}
		\begin{tikzpicture}[scale=0.25, yscale=-1.0]
			\path[draw=black,line cap=round,line width=\figuresLineThickness] (10.4835,11.4019) .. controls (9.3708,12.0013) and (7.9918,12.0814) .. (6.8172,11.6149) .. controls (6.1730,11.3591) and (5.5970,10.9415) .. (5.1423,10.4188);
			\path[draw=black,line cap=round,line width=\figuresThickLineThickness] (5.1423,10.4188) .. controls (4.7679,9.9882) and (4.4758,9.4863) .. (4.2959,8.9446) .. controls (3.8822,7.6987) and (4.0642,6.2890) .. (4.6925,5.1363) .. controls (5.0374,4.5034) and (5.5093,3.9474) .. (6.0628,3.4851);
			\path[draw=black,line cap=round,line width=\figuresLineThickness] (6.0628,3.4851) .. controls (6.5174,3.1053) and (7.0271,2.7888) .. (7.5665,2.5449) .. controls (8.2252,2.2470) and (8.9336,2.0512) .. (9.6552,2.0066) .. controls (10.3768,1.9619) and (11.1117,2.0711) .. (11.7750,2.3587) .. controls (11.9775,2.4465) and (12.1731,2.5512) .. (12.3584,2.6715);
			\path[draw=black,line cap=round,line width=\figuresThickLineThickness] (12.3584,2.6715) .. controls (12.7801,2.9453) and (13.1488,3.2998) .. (13.4253,3.7190) .. controls (13.8232,4.3226) and (14.0235,5.0610) .. (13.9485,5.7801) .. controls (13.8903,6.3391) and (13.6695,6.8755) .. (13.3476,7.3362) .. controls (13.0256,7.7969) and (12.6048,8.1838) .. (12.1391,8.4984) .. controls (11.8856,8.6696) and (11.6202,8.8193) .. (11.3463,8.9527);
			\path[draw=black,line cap=round,line width=\figuresLineThickness] (11.3463,8.9527) .. controls (10.6136,9.3095) and (9.8204,9.5499) .. (9.0338,9.7755) .. controls (8.0212,10.0659) and (6.9955,10.3408) .. (5.9443,10.4112) .. controls (4.8932,10.4817) and (3.8047,10.3362) .. (2.8816,9.8285) .. controls (2.1632,9.4334) and (1.5627,8.8215) .. (1.1979,8.0873) .. controls (0.8330,7.3530) and (0.7082,6.4987) .. (0.8662,5.6942) .. controls (1.0242,4.8896) and (1.4673,4.1410) .. (2.1082,3.6296) .. controls (2.7491,3.1183) and (3.5846,2.8519) .. (4.4020,2.9161) .. controls (5.3113,2.9876) and (6.1539,3.4517) .. (6.8374,4.0557) .. controls (7.5209,4.6597) and (8.0664,5.4009) .. (8.6096,6.1337) .. controls (9.3323,7.1087) and (10.0750,8.0939) .. (11.0488,8.8183) .. controls (11.1233,8.8738) and (11.1992,8.9276) .. (11.2763,8.9796);
			\path[draw=black,line cap=round,line width=\figuresLineThickness] (11.2763,8.9796) .. controls (11.7028,9.2674) and (12.1676,9.4995) .. (12.6612,9.6425) .. controls (13.2442,9.8113) and (13.8686,9.8529) .. (14.4612,9.7225) .. controls (15.4092,9.5138) and (16.2391,8.8577) .. (16.7023,8.0048) .. controls (17.1655,7.1518) and (17.2671,6.1189) .. (17.0247,5.1790) .. controls (16.8208,4.3888) and (16.3752,3.6566) .. (15.7417,3.1422) .. controls (15.1081,2.6278) and (14.2861,2.3403) .. (13.4712,2.3858) .. controls (12.7395,2.4266) and (12.0259,2.7366) .. (11.4966,3.2434) .. controls (10.9673,3.7502) and (10.6267,4.4498) .. (10.5542,5.1790);
			\path[fill=black] (10.4835,11.4019) circle (\figuresPointSize);
			\path[fill=black] (10.5542,5.1790) circle (\figuresPointSize);
			\path[fill=black] (5.1423,10.4188) circle (\figuresSmallPointSize);
			\path[fill=black] (6.0628,3.4851) circle (\figuresSmallPointSize);
			\path[fill=black] (12.3584,2.6715) circle (\figuresSmallPointSize);
			\path[fill=black] (11.2763,8.9796) circle (\figuresSmallPointSize);
			\path[draw=black, ->,line cap=round,line width=\figuresThinLineThickness] (10.4835,11.4019) .. controls (10.0321,11.2304) and (9.6692,10.8412) .. (9.5295,10.3790) .. controls (9.4456,10.1012) and (9.4395,9.8036) .. (9.4848,9.5170) .. controls (9.5301,9.2304) and (9.6255,8.9536) .. (9.7409,8.6874) .. controls (9.9716,8.1549) and (10.2840,7.6571) .. (10.4658,7.1060);
			\path[draw=black,line cap=round,line width=\figuresThinLineThickness] (10.4658,7.1060) .. controls (10.6698,6.4877) and (10.7008,5.8134) .. (10.5542,5.1790);
			\draw (4.0642,6.2890) node[left] {$e_1$};
			\draw (13.8903,6.3391) node[right] {$e_2$};
		\end{tikzpicture}
		\caption{\label{Figure:RightLeftExample}The left border edge $e_1$ and the right border edge $e_2$}
	\end{center}
\end{figure}

We need to partition the set of border edges into two disjoint subsets.
To this end note that
the union of a shortcut  with outer edges
cuts each $\gamma$-region and its boundary into two connected parts,
one of them lies to the left and the other lies to the right of the shortcut $\gamma$
(recall that a shortcut is directed from the beginning of the \fkd~to its end).
A border edge $e$ is called a \emph{left border edge} (resp. a \emph{right border edge})
if it is contained in the left (resp. the right) part
(in the sense above) of the boundary of a $\gamma$-region adjacent to the edge $e$.
Note that by Theorem~\ref{Theorem:GammaMinimum}
if a shortcut is fixed then
the status (either left or right) of a border edge is determined unambiguously.
(In Fig.~\ref{Figure:RightLeftExample}
the edges $e_1$ and $e_2$ are  the left border edge and the right border edge, respectively.)

A crossings of the type~$0$ play an important role in our construction,
so we need to study them more extensive.

A crossing $x$ of the type~$0$ of a \fkd~$F$ with a shortcut $\gamma$
is called:
\begin{itemize}
\item \emph{regular crossing}:
if at least one of edges adjacent to $x$ is not a border edge,

\item \emph{exceptional crossing}:
if all $4$ edges adjacent to $x$ are border edges,

\item \emph{left (resp. right) one-sided exceptional crossing}:
if all $4$ edges adjacent to $x$ are the left (resp. the right) border edges,

\item \emph{two-sided exceptional crossing}:
if $2$ of edges adjacent to $x$ are left border edges
while $2$ other are right.
\end{itemize}

Throughout the terms defined above
are applied to a crossings of the type~$0$ only,
hence we can omit the words ``of the type~$0$'' in corresponding word-combinations.
For example, sometimes we will write ``a one-sided exceptional crossing'' instead of ``a one-sided exceptional crossing of the type~$0$''.

Finally, we define a distance between two regions of a \fkd.
Let $F$ be a \fkd, and $R_1,R_2$ are two its regions.
Denote by $\rho(R_1,R_2)$ the minimal number of intersections
of a simple arc starting inside $R_1$, ending inside $R_2$
and along the way intersecting $F$ transversely in points disjoint from the crossing and the endpoints of $F$.

Note, if $x$ is an exceptional crossing then irrespective of whether it is one-sided or two-sided
exactly $2$ (counted with multiplicity) of regions adjacent to the crossing
are $\gamma$-regions.

\begin{lemma}
\label{Lemma:DistanceOneSide}
Let $F$ be a prime \fkd~with a minimal shortcut $\gamma$,
$x$ is an exceptional crossing of the type~$0$
and $\Delta_1,\Delta_2$ are $\gamma$-regions adjacent to $x$.
Then
\begin{equation*}
\rho(\Delta_1,\Delta_2)=\begin{cases}
1, & \text{if $x$ is two-sided,}\\
2, & \text{if $x$ is one-sided.}
\end{cases}
\end{equation*}
\end{lemma}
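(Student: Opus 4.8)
The plan is to analyze the local picture at the exceptional crossing $x$, and then use primeness to show that the obvious local upper bounds for $\rho(\Delta_1,\Delta_2)$ are in fact sharp. First I would fix notation: label the four corners (local regions) at $x$ cyclically as $Q_1,Q_2,Q_3,Q_4$, so that $Q_i$ and $Q_{i+2}$ are opposite corners. Since $x$ has type~$0$, none of the four edges at $x$ is a $\gamma$-edge, so by the second statement of Theorem~\ref{Theorem:GammaMinimum} (applied to each edge) together with the fact noted just before the lemma, exactly two of the corner-germs at $x$ belong to $\gamma$-regions; call these $\Delta_1$ and $\Delta_2$ (they may a priori be germs of the same region, but the "counted with multiplicity" remark covers this). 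There are two combinatorial cases: the two $\gamma$-corners are opposite (say $Q_1,Q_3$) or adjacent (say $Q_1,Q_2$). I would next match these to the two-sided versus one-sided dichotomy: if $x$ is two-sided, two of its edges are left border edges and two are right, and one checks from the definition of left/right border edge that the left edges are consecutive in the cyclic order at $x$ (they bound the part of a $\gamma$-region lying left of $\gamma$), hence the two $\gamma$-corners must be opposite; symmetrically, if $x$ is one-sided all four edges lie on the same side, forcing the two $\gamma$-corners to be adjacent.

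With the local picture pinned down, the upper bounds are immediate. In the two-sided case the $\gamma$-corners $Q_1,Q_3$ meet along the single edge separating $Q_2$ from $Q_1$... more precisely a short arc through $x$'s neighborhood crossing exactly one edge of $F$ joins a point of $\Delta_1$ to a point of $\Delta_2$, so $\rho(\Delta_1,\Delta_2)\le 1$; and since $\Delta_1,\Delta_2$ are genuinely distinct $\gamma$-regions (distinct canonical numbers, by Remark~\ref{rem:numbering}, because a crossing of type~$0$ is not an endpoint of any $\gamma$-edge), they are disjoint, so $\rho\ge 1$ and hence $\rho=1$. In the one-sided case the two $\gamma$-corners $Q_1,Q_2$ are adjacent along one edge $e_0$ at $x$, and a short arc through the neighborhood of $x$ crossing the two edges flanking the non-$\gamma$ corner between them — equivalently crossing $e_0$ and the next edge — realizes $\rho(\Delta_1,\Delta_2)\le 2$.

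The main obstacle is the lower bound $\rho(\Delta_1,\Delta_2)\ge 2$ in the one-sided case, i.e.\ ruling out that some \emph{other} route joins $\Delta_1$ to $\Delta_2$ crossing $F$ only once. Here is where primeness enters. Suppose for contradiction $\rho(\Delta_1,\Delta_2)=1$, realized by a simple arc $\alpha$ meeting $F$ transversely in one interior point, lying in some edge $e'$. The edge $e_0$ at $x$ already separates $\Delta_1$ from $\Delta_2$; together with $\alpha$ one can form an embedded circle $C$ that meets $F$ transversely in exactly two points, one on $e_0$ and one on $e'$, with $e_0\ne e'$ (they cannot coincide: $e_0$ is adjacent to $x$ on both its sides by hypothesis of one-sidedness, whereas an arc through $e'$ realizing a distance-$1$ path between the two $\gamma$-corners would have to cross $e'$ transversally away from $x$, and a case check on the local types shows $e'=e_0$ is impossible). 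By condition~(i) of primeness, $C$ bounds a disk meeting $F$ either in a single properly embedded arc or in two disjoint arcs adjacent to the endpoints of $F$. I would argue that neither alternative is consistent with the fact that $x$ — a $4$-valent crossing, not an endpoint — lies on the boundary data cut out by $C$: one of the two disks bounded by $C$ contains the crossing $x$ (indeed $e_0$ abuts $x$), forcing that disk to meet $F$ in something other than a single unknotted arc or two endpoint-arcs, a contradiction. This is essentially the same primeness argument used in the proof of Theorem~\ref{Theorem:GammaMinimum}, and the bookkeeping of exactly which disk contains $x$ and which edges are $\gamma$-edges is the delicate part; I would organize it by appealing to the explicit local neighborhood pictures of type-$0$ crossings in Fig.~\ref{Figure:VertexTypesNeighborhoods} and the border-edge definitions, rather than re-deriving primeness consequences from scratch. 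Combining the matched upper and lower bounds in the two cases gives the stated formula.
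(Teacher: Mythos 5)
Your local analysis of the crossing $x$ contains an error that undermines both halves of the argument. Since $x$ is exceptional, all four edges incident to $x$ are border edges, and a border edge by definition has a $\gamma$-region on one side and a non-$\gamma$-region on the other; hence, going around $x$, the four corner-germs must \emph{alternate} between $\gamma$-regions and non-$\gamma$-regions. So the two $\gamma$-corners are opposite at \emph{every} exceptional crossing, one-sided or two-sided (this is exactly the configuration in Fig.~\ref{Figure:TwoPointArc}); the one-sided/two-sided distinction is a global property of which side of $\gamma$ the border edges lie on, not a difference in the local corner pattern. Consequently there is no local arc near $x$ crossing a single edge that joins $\Delta_1$ to $\Delta_2$, so your upper bound $\rho\le 1$ in the two-sided case is unjustified (the distance-$1$ path, which does exist, is necessarily non-local), and the premise of your one-sided lower-bound argument --- that a single edge $e_0$ at $x$ separates $\Delta_1$ from $\Delta_2$ --- is false. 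Your claim that $\Delta_1\ne\Delta_2$ follows from ``distinct canonical numbers'' is also circular: that two corner-germs at $x$ lie in distinct regions is something to be proved (the paper does it with condition~(i) of primeness applied to a circle through the opposite corners).

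The idea you are missing is the parity argument that carries the whole dichotomy in the paper's proof. One first gets $\rho\le 2$ from the local arc $l$ of Fig.~\ref{Figure:TwoPointArc} (crossing the two edges flanking a non-$\gamma$ corner) and $\rho\ge 1$ from primeness. To separate the two cases one closes $l$ up with an arc realizing the putative distance and counts intersections of the resulting embedded circle with the arc $F$ modulo $2$: the count must be even if the circle does not separate the endpoints of $F$ and odd if it does. In the one-sided case a distance-$1$ closure gives $3$ intersections with the endpoints on one side (contradiction), and in the two-sided case a distance-$2$ closure gives $4$ intersections with the endpoints separated (contradiction). Your proposed substitute --- building a $2$-point circle and invoking condition~(i) because one disk contains $x$ --- does not close even if the local picture were as you describe, since condition~(i) only requires that \emph{one} of the two complementary disks meet $F$ nicely, and the disk not containing $x$ may well do so.
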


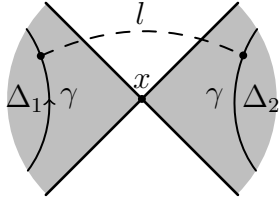
\begin{figure}[h]
	\begin{center}
		\begin{tikzpicture}[scale=0.35, yscale=-1.0]
			\path[rotate=-45.0,fill=lightgray] (-2.9377,8.6459)arc(180.000:270.000:5.120) -- (2.1824,8.6459) -- cycle;
			\path[cm={{-0.70711,-0.70711,-0.70711,0.70711,(0.0,0.0)}},fill=lightgray] (-13.7660,-2.1824)arc(180.000:270.000:5.120) -- (-8.6459,-2.1824) -- cycle;
			\path[draw=black,line cap=round,line width=\figuresLineThickness] (4.0090,0.9226) -- (11.3045,8.2182);
			\path[draw=black,line cap=round,line width=\figuresLineThickness] (11.3045,0.9226) -- (4.0090,8.2182);
			\path[fill=black] (7.6567,4.5704) circle (\figuresSmallPointSize) node[above] {$x$};
			\path[draw=black,->,line cap=round,line width=\figuresThinLineThickness] (3.3231,7.2971) .. controls (3.9411,6.4767) and (4.1709,5.4947) .. (4.1369,4.5521);
			\path[draw=black,line cap=round,line width=\figuresThinLineThickness] (4.1369,4.5521) .. controls (4.1006,3.5468) and (3.7643,2.5864) .. (3.2790,1.9150);
			\path[draw=black,line cap=round,line width=\figuresThinLineThickness] (11.9904,7.2971) .. controls (11.3723,6.4767) and (11.1426,5.4947) .. (11.1766,4.5521) .. controls (11.2128,3.5468) and (11.5491,2.5864) .. (12.0344,1.9150);
			\path[draw=black,dash pattern=on 5.0 off 5.0,line cap=round,line width=\figuresThinLineThickness] (3.8157,2.9362) .. controls (6.5030,1.7134) and (8.8847,1.6555) .. (11.5053,2.9161);
			\draw (7.6, 1.3) node {$l$};
			\draw (4.1369,4.5521) node[right] {$\gamma$};
			\draw (11.1766,4.5521) node[left] {$\gamma$};
			\draw (3.2,4.5704) node {$\Delta_1$};
			\draw (12.2,4.5704) node {$\Delta_2$};
			\path[fill=black] (3.8157,2.9362) circle (\figuresSmallPointSize);
			\path[fill=black] (11.5053,2.9161) circle (\figuresSmallPointSize);
		\end{tikzpicture}
		\caption{\label{Figure:TwoPointArc} The simple arc $l$ going from $\Delta_1$ to $\Delta_2$ and crossing $F$ twice nearby $x$}
	\end{center}
\end{figure}

\begin{proof}
Note (see Fig.~\ref{Figure:TwoPointArc}),
there is a simple arc $l$ going from $\Delta_1$ to $\Delta_2$
and crossing $F$ twice nearby $x$.
Hence $\rho(\Delta_1,\Delta_2) \leq 2$.
Assume $\rho(\Delta_1,\Delta_2) =0$. It means $\Delta_1 =\Delta_2 =\Delta$.
In this case we can close the arc $l$ up by $\Delta$.
The resulting circle meets $F$ twice
and bounds two disks  which contain a crossings.
The existing of such a circle contradicts to the condition (i) of the definition of prime \fkd.
Therefore,
$$1 \leq \rho(\Delta_1,\Delta_2) \leq 2.$$

Let the crossing $x$ is one-sided. It is sufficient  to show $\rho(\Delta_1,\Delta_2) \not=1$.
Assume the contrary. In this case the shortcut $\gamma$
traverses from $\Delta_1$ to $\Delta_2$ with one intersection of $F$ only.
Thus we can close the arc $l$ up  with exactly one additional intersection of $F$.
The resulting circle meets $F$ exactly three times,
while the number should be even. It's
because both endpoints of the \fkd~lie in the same disk, bounded by the circle.
Hence, the number of goings into the disk should be equal to the number of goings out.

Let the crossing $x$ is two-sided. It is sufficient to show $\rho(\Delta_1,\Delta_2) \not=2$.
Again assume the contrary. Now closing the arc $l$ up
we obtain a circle,  which separates the endpoints of $F$
and meets $F$ exactly $4$ times,
while in this case the number of intersections should be odd.
\end{proof}

\subsection{The left and right border chains}
\label{sec:BorderChains}

Given a \fkd~$F$ with a shortcut $\gamma$,
the \emph{$\gamma$-domain} of $F$ (denoted by $R_{\gamma}$) is defined to be
the union of all $\gamma$-regions with the interior of all $\gamma$-edges  and both endpoints of $F$.
Informally speaking, we clean all $\gamma$-edges from $F$ (including both outer edges),
as a result, $\gamma$-regions amalgamate to an $R_{\gamma}$.

The set $S^2 \setminus R_{\gamma}$ is the union
of all regions which are not a $\gamma$-regions
with all edges which are not $\gamma$-edges.
It is easy to show that $R_{\gamma}$ is open and connected.
If $F$ is prime, then by Theorem~\ref{Theorem:GammaMinimum}
$R_{\gamma}$ is homeomorphic to an open disk.
In this case each exceptional crossing (both one-sided and two-sided) 
is a point of self-tangency of $\partial R_{\gamma}$.
The union of $\gamma$ with the outer edges is a diameter of $R_{\gamma}$
viewed as a disk.

Note, $\partial R_{\gamma}$ consists of all border edges.
Therefore, in the case of prime \fkd~
we can regard $\partial R_{\gamma}$ as
the closed path in $F$, which goes  exactly one time along each border edge. 
We denote the path by $P_{\gamma}$.
The path $P_{\gamma}$ can be divided into two parts
by two crossings which are connected by outer edges with the endpoints of $F$.
One of these parts is formed by all left
 and other one is formed by all right border edges.
It follows  from the fact
that the union of $\gamma$ with outer edges divides $R_{\gamma}$
and $\partial R_{\gamma}$ into two parts,
one of which lies to the left of $\gamma$
while the other one lies to the right.

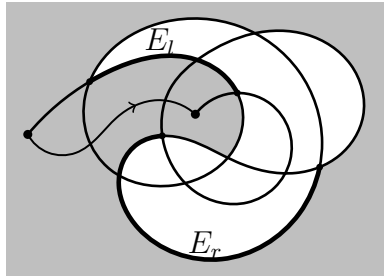
\begin{figure}[h]
	\begin{center}
		\begin{tikzpicture}[scale=0.3, yscale=-1.0]
			\path[fill=lightgray] (1.9988,0.6490) -- (18.7319,0.6490) -- (18.7319,12.7818) -- (1.9988,12.7818) -- cycle;
			\path[fill=white, line width=0.0] (5.6629,4.1391) .. controls (5.6629,4.1391) and (5.8707,3.5535) .. (6.4335,2.9351) .. controls (6.7550,2.5818) and (6.9704,2.3302) .. (7.9380,1.8390) .. controls (8.9056,1.3478) and (10.9630,1.3394) .. (11.6007,1.5015) .. controls (12.3315,1.6872) and (13.0258,2.0140) .. (13.0258,2.0140) .. controls (13.0258,2.0140) and (14.5009,1.7390) .. (15.4135,2.0765) .. controls (16.3260,2.4140) and (17.4136,3.2516) .. (17.7386,4.5142) .. controls (18.0636,5.7767) and (17.5011,6.4143) .. (17.3136,6.7768) .. controls (17.1261,7.1393) and (16.7114,7.4752) .. (15.8510,7.9894) .. controls (15.6260,8.7644) and (14.9459,10.1261) .. (14.3948,10.6770) .. controls (13.7259,11.3458) and (12.2107,12.1151) .. (11.0320,12.0418) .. controls (9.8534,11.9684) and (8.3321,11.3781) .. (7.6130,10.4270) .. controls (6.8504,9.4186) and (6.8113,8.3002) .. (7.2563,7.5152) .. controls (7.6536,6.8143) and (8.1197,6.5341) .. (9.0935,6.5695) .. controls (10.3348,6.6145) and (11.8132,7.5393) .. (11.8132,7.5393) .. controls (11.8132,7.5393) and (12.3477,6.7680) .. (12.4477,6.1804) .. controls (12.5477,5.5929) and (12.2309,4.4172) .. (11.7507,3.9891) .. controls (11.3062,3.5929) and (10.8625,3.2340) .. (10.0163,3.0750) .. controls (9.1700,2.9161) and (8.8594,3.0016) .. (7.7718,3.2391) .. controls (6.6877,3.4758) and (5.6629,4.1391) .. (5.6629,4.1391) -- cycle;
			\path[draw=black,line cap=round,line width=\figuresLineThickness] (2.9341,6.4976) .. controls (3.7147,5.5594) and (4.6587,4.7579) .. (5.7095,4.1372);
			\path[draw=black,line cap=round,line width=\figuresThickLineThickness] (5.7095,4.1372) .. controls (5.9721,3.9821) and (6.2415,3.8382) .. (6.5165,3.7064) .. controls (7.5772,3.1980) and (8.7698,2.8640) .. (9.9286,3.0655) .. controls (10.5080,3.1663) and (11.0683,3.4033) .. (11.5191,3.7810) .. controls (11.7994,4.0159) and (12.0355,4.3065) .. (12.2026,4.6312);
			\path[draw=black,line cap=round,line width=\figuresLineThickness] (12.2026,4.6312) .. controls (12.3042,4.8286) and (12.3804,5.0387) .. (12.4255,5.2564) .. controls (12.5141,5.6841) and (12.4824,6.1343) .. (12.3483,6.5499) .. controls (12.2142,6.9656) and (11.9792,7.3467) .. (11.6782,7.6632) .. controls (11.0763,8.2963) and (10.2237,8.6579) .. (9.3562,8.7613) .. controls (8.3859,8.8771) and (7.3522,8.6679) .. (6.5898,8.0566) .. controls (6.1172,7.6778) and (5.7634,7.1559) .. (5.5731,6.5809) .. controls (5.3829,6.0059) and (5.3551,5.3799) .. (5.4799,4.7872) .. controls (5.6538,3.9618) and (6.1212,3.2097) .. (6.7496,2.6470) .. controls (7.3780,2.0843) and (8.1615,1.7069) .. (8.9823,1.5124) .. controls (10.0748,1.2536) and (11.2427,1.3164) .. (12.2979,1.6999) .. controls (13.3531,2.0834) and (14.2908,2.7874) .. (14.9459,3.6992) .. controls (15.6010,4.6109) and (15.9693,5.7273) .. (15.9760,6.8500) .. controls (15.9782,7.2280) and (15.9393,7.6061) .. (15.8615,7.9760);
			\path[draw=black,line cap=round,line width=\figuresThickLineThickness] (15.8615,7.9760) .. controls (15.7081,8.7047) and (15.4037,9.4018) .. (14.9656,10.0039) .. controls (14.1605,11.1104) and (12.8915,11.8831) .. (11.5307,12.0270) .. controls (10.1699,12.1708) and (8.7418,11.6629) .. (7.8229,10.6491) .. controls (7.3484,10.1257) and (7.0063,9.4601) .. (6.9683,8.7548) .. controls (6.9493,8.4021) and (7.0072,8.0438) .. (7.1498,7.7207) .. controls (7.2924,7.3976) and (7.5208,7.1107) .. (7.8122,6.9111) .. controls (8.1189,6.7011) and (8.4879,6.5920) .. (8.8592,6.5679);
			\path[draw=black,line cap=round,line width=\figuresLineThickness] (8.8592,6.5679) .. controls (8.8771,6.5667) and (8.8950,6.5658) .. (8.9129,6.5650) .. controls (9.3022,6.5483) and (9.6915,6.6209) .. (10.0623,6.7406) .. controls (10.8039,6.9799) and (11.4714,7.4022) .. (12.1848,7.7157) .. controls (13.0108,8.0786) and (13.9185,8.2960) .. (14.8163,8.2062) .. controls (15.5098,8.1369) and (16.1892,7.8796) .. (16.7299,7.4399) .. controls (17.2706,7.0001) and (17.6659,6.3746) .. (17.7921,5.6891) .. controls (17.9380,4.8969) and (17.7158,4.0547) .. (17.2417,3.4034) .. controls (16.7675,2.7521) and (16.0552,2.2896) .. (15.2812,2.0661) .. controls (14.5072,1.8427) and (13.6754,1.8510) .. (12.8907,2.0335) .. controls (12.1060,2.2160) and (11.3668,2.5683) .. (10.6962,3.0146) .. controls (10.1349,3.3881) and (9.6080,3.8404) .. (9.2718,4.4247) .. controls (8.8233,5.2041) and (8.7555,6.1704) .. (8.9920,7.0379) .. controls (9.2065,7.8244) and (9.6711,8.5511) .. (10.3341,9.0255) .. controls (10.9970,9.4998) and (11.8607,9.7052) .. (12.6545,9.5198) .. controls (13.1100,9.4134) and (13.5360,9.1819) .. (13.8710,8.8554) .. controls (14.2059,8.5290) and (14.4485,8.1081) .. (14.5601,7.6539) .. controls (14.6716,7.1996) and (14.6514,6.7134) .. (14.4995,6.2710) .. controls (14.3477,5.8286) and (14.0643,5.4318) .. (13.6931,5.1471) .. controls (13.1984,4.7676) and (12.5494,4.5956) .. (11.9316,4.6801) .. controls (11.3138,4.7647) and (10.7349,5.1049) .. (10.3604,5.6034);
			\path[draw=black,->,line cap=round,line width=\figuresThinLineThickness] (2.9341,6.4976) .. controls (3.1272,6.8649) and (3.4572,7.1582) .. (3.8446,7.3069) .. controls (4.2321,7.4555) and (4.6736,7.4582) .. (5.0628,7.3143) .. controls (5.4402,7.1748) and (5.7570,6.9083) .. (6.0383,6.6207) .. controls (6.3197,6.3331) and (6.5746,6.0189) .. (6.8754,5.7517) .. controls (7.1350,5.5211) and (7.4320,5.3271) .. (7.7512,5.1908);
			\path[draw=black,line cap=round,line width=\figuresThinLineThickness] (7.7512,5.1908) .. controls (8.0456,5.0652) and (8.3588,4.9888) .. (8.6789,4.9783) .. controls (9.2918,4.9582) and (9.9094,5.1878) .. (10.3604,5.6034);
			\path[fill=black] (2.9341,6.4976) circle (\figuresPointSize);
			\path[fill=black] (10.3604,5.6034) circle (\figuresPointSize);
			\path[fill=black] (5.6735,4.1586) circle (\figuresSmallPointSize);
			\path[fill=black] (12.2168,4.6592) circle (\figuresSmallPointSize);
			\path[fill=black] (15.8675,7.9471) circle (\figuresSmallPointSize);
			\path[fill=black] (8.8937,6.5573) circle (\figuresSmallPointSize);
			\draw (10.8, 11.4) node {$E_r$};
			\draw (8.8, 2.4) node {$E_l$};
		\end{tikzpicture}
		\caption{\label{Figure:RightChainExample}$E_l$ is a left border chain and $E_r$ is a right border chain}
	\end{center}
\end{figure}

A \emph{left (resp. right) border chain} is define to be a sequence of the left (resp. the right) border edges,
forming a connected subpath of $P_{\gamma}$.
In Fig.~\ref{Figure:RightChainExample}
a left border chain $E_l$ and a right border chain $E_r$ of a \fkd~are shown.
Below we denote such a chains by $E =\{e_1,\ldots,e_n\}$
where edges $e_i,e_{i+1}$ are neighbouring in the path $\partial R_{\gamma}$.
For simplicity we will think that edges involving
in a chain are directed according specified ordering of edges.
So we can say about the beginning and the end of a chain
(the latter are called the endpoints of the chain)
and about crossings which a chain passes through

\begin{remark} \label{rem:Ambiguity}
To prevent an ambiguity in using terminology, we make following remarks.
\begin{enumerate}
\item A border chain is understood as an ordered set of border edges.
In particular, it means that two border chains which are disjoint in the sense above
can have non-empty intersection if they are viewed as the subsets of $S^2$.
\item If a crossing $x$ is an exceptional one-sided crossing,
then, as we mentioned above, $x$ is a point of self-tangency of $\partial R_{\gamma}$.
In this case we think that $x$ has two distinct entries in $P_{\gamma}$.
Therefore, a left (resp. right) border chain can pass through an one-sided left (resp. right) crossing $0,1$ or $2$ times.
\item We think that a border chain does not pass through its endpoints
(even in the case when the endpoints of the chain
are two entries of the same crossing).
\end{enumerate}
\end{remark}

From now we focus on a specific border chains
which play the key role in our consideration.

A left (resp. right) border chain $E$ is called \emph{true} 
if the endpoints of $E$ are 
either of the type~$2$
or one-sided left (resp. right) exceptional crossing of the type~$0$
(the situation in which an endpoint is of the type~$0$ while the other one is of the type~$2$ is allowed).

\begin{lemma}
\label{Lemma:NonEndedChain}
Let $F$ is a prime \fkd~with a shortcut $\gamma$
and $E$ is a true border chain satisfying following conditions:
\begin{enumerate}
\item $E$ do not contain  a true border chain distinct from $E$;
\item No one of endpoints of $E$ is adjacent to an outer edge;
\item $E$ passes through not more than $1$ two-sided exceptional crossing.
\end{enumerate}
Then $E$ passes through at least $1$ regular crossing.
\end{lemma}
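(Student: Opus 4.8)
The plan is to argue by contradiction: suppose $E$ passes through no regular crossing. Then every crossing $E$ passes through (in the sense of Remark~\ref{rem:Ambiguity}, counting the entries into $P_\gamma$) is an exceptional crossing of the type~$0$, since the only crossings of $P_\gamma$ that are not of the type~$0$ are of the type~$2$, and those, being true-chain endpoints, cannot be passed through. By hypothesis~(3), at most one such exceptional crossing is two-sided; all the others are one-sided left (resp. right) exceptional crossings. Since $E$ is a left (resp. right) border chain, each one-sided crossing it passes through must in fact be a one-sided \emph{left} (resp. \emph{right}) exceptional crossing — a one-sided crossing of the opposite side has no left (resp. right) border edges adjacent to it, so the chain could not run through it.

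The key observation I would develop is what happens at a one-sided left (resp. right) exceptional crossing $x$ that $E$ passes through. By Remark~\ref{rem:Ambiguity}(2), $x$ is a point of self-tangency of $\partial R_\gamma$ with two distinct entries in $P_\gamma$. If $E$ passes through $x$ only once, then I claim that one of the two entries of $x$ in $P_\gamma$ is a true-chain endpoint candidate — more precisely, the second passage through $x$ splits off a proper true sub-chain, contradicting minimality condition~(1). Indeed, the four edges at $x$ are all left (resp. right) border edges; the chain $E$ uses two of them consecutively through one entry of $x$, and the other two form (with suitable extensions) a left (resp. right) border chain whose endpoints are of the required types — this would be a true border chain strictly contained in $E$, violating~(1). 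So under condition~(1), the chain $E$ must pass through each of its one-sided exceptional crossings exactly twice, i.e.\ through both entries.

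Now I would count: if $E$ passes through $m$ one-sided exceptional crossings, it does so along $2m$ entries, and it passes through at most one two-sided exceptional crossing, contributing at most one more entry. A chain $E=\{e_1,\dots,e_n\}$ passes through exactly $n-1$ crossings (entries), counted with the conventions of Remark~\ref{rem:Ambiguity}. On the other hand, I would use Lemma~\ref{Lemma:DistanceOneSide}: each one-sided exceptional crossing $x$ that $E$ wraps around (both entries) forces the two $\gamma$-regions flanking it to satisfy $\rho(\Delta_1,\Delta_2)=2$, and tracking these distances along $E$ — together with the fact (from Theorem~\ref{Theorem:GammaMinimum} and Remark~\ref{rem:numbering}) that the $\gamma$-regions along the chain have strictly controlled canonical numbers — yields a parity or monotonicity contradiction. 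Concretely, wrapping twice around a one-sided crossing returns the chain to a $\gamma$-region whose canonical number differs from the starting one by an amount incompatible with the endpoints of $E$ being of type~$2$ or one-sided exceptional, once hypothesis~(2) (no endpoint adjacent to an outer edge, so the canonical numbers at the ends are strictly between $0$ and $\hh(F)$) and hypothesis~(3) are imposed.

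The main obstacle I expect is the bookkeeping in the previous paragraph: making precise how a left (resp. right) border chain "advances" through the canonical numbering of $\gamma$-regions as it passes through exceptional crossings of each kind, and showing that the combination "all exceptional, each one-sided one wrapped twice, at most one two-sided, no outer endpoints, type~$2$/one-sided endpoints" is genuinely impossible. I would organize this by assigning to each border edge of $E$ the canonical number of the unique $\gamma$-region adjacent to it, then showing this sequence of numbers is monotone except for a controlled $\pm$ fluctuation at two-sided crossings and must both start and end in the open range $\{1,\dots,\hh(F)-1\}$ while being forced by the wrapped one-sided crossings to leave it — the parity/separation arguments used inside the proof of Lemma~\ref{Lemma:DistanceOneSide} (counting intersections of a closing circle with $F$ and invoking primeness conditions (i) and (ii)) are exactly the tools I would reuse here to derive the final contradiction.
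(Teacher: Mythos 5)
Your opening classification is wrong, and the error propagates through the whole plan. A border chain does not only meet crossings of type~$0$ and type~$2$: crossings of type~$1$ also lie on $P_{\gamma}$ (a type-$1$ crossing is adjacent to a $\gamma$-edge and hence to $\gamma$-regions, while by Theorem~\ref{Theorem:GammaMinimum}(2) its non-$\gamma$-edges are each adjacent to a non-$\gamma$-region, so it is an endpoint of border edges), and these are precisely the crossings a true chain generically runs through (see the left half of Fig.~\ref{Figure:ChainExeptions}). Conversely, condition~(1) forbids $E$ from passing through \emph{any} one-sided exceptional crossing of its own side, not merely from passing through one ``only once'': each entry of such a crossing in the interior of $E$ is an admissible endpoint of a true chain, so already a single passage splits $E$ into two proper true sub-chains. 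Your conclusion that $E$ must wrap around each such crossing exactly twice is therefore false; the correct conclusion from the assumption ``no regular crossing'' is that every interior crossing of $E$ is of type~$1$, except for at most one two-sided exceptional crossing allowed by condition~(3).

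Because of this, the contradiction you aim for never materializes: the ``parity or monotonicity'' bookkeeping in your final paragraphs is only announced (``I would\ldots'', ``I expect\ldots'') and is built on the wrong population of crossings. The missing idea is the one the paper actually uses: when the interior crossings of $E=\{e_1,\ldots,e_n\}$ are all of type~$1$, one can draw an arc running parallel to $E$ on its $\gamma$-region side, joining the $\gamma$-region of canonical number $N(e_1)-2$ to the one of canonical number $N(e_n)+2$ while crossing $F$ only $n+1$ times, whereas the minimal shortcut $\gamma$ crosses $F$ exactly $n+3$ times between these two regions --- contradicting minimality of $\gamma$ (hypothesis~(2) guarantees that these two regions exist, and Lemma~\ref{Lemma:DistanceOneSide} handles the variant with one two-sided exceptional crossing). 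Without constructing such a competing arc, no contradiction with the minimality of the shortcut is obtained, so the proposal does not prove the lemma.
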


\begin{proof}
Let $E =\{e_1,\ldots,e_n\},n \geq 1,$ be a left true border chain
(in the case of a true right  border chain the proof is completely analogous).

Denote by $x$ and $y$ the beginning and the end of $E$, respectively.
Let the numbering of edges in the chain $E$ is such that
going along the left border edges from the beginning of $F$ to its end
(or more precisely, going along the part of $P_{\gamma}$ consisting of left border edges
from the crossing adjacent to the first outer edge of $F$
to the crossing adjacent to the last outer edge of $F$)
we meet $x$ before $y$.

Assume $E$ do not passes through a regular crossing.
Then all crossings in the chain $E$ except its endpoints
are either of the type~$1$ or two-sided exceptional crossing.
That is because by the first condition of~Lemma~\ref{Lemma:NonEndedChain}
$E$ does not pass through neither a crossing of the type~$2$
nor a one-sided exceptional crossing.
Denote by $k$ the number of crossings of the type~$2$ through which $E$ passes.
By hypothesis, $k$ is equal either to $0$ or to $1$.

\input{ChainExeptions}

{\bf The case $k=0$}.
Denote by $\Delta(e_i)$ and $N(e_i),i=1,\ldots,n,$
the $\gamma$-region adjacent to the edge $e_i$
and its canonical number (see Remark~\ref{rem:numbering}), respectively.
Let $\Delta_x,\Delta_y$ denote the $\gamma$-regions
having numbers $N(e_1)-2$ and $N(e_n)+2$, respectively
(see Fig.~\ref{Figure:ChainExeptions} on the left).
The existence of  $\Delta_x$ and $\Delta_y$ satisfying the latter condition
is clear in the case of crossing of the type~$2$,
and follows from Lemma~\ref{Lemma:DistanceOneSide}
in the case of exceptional one-sided crossing.
Note that the shortcut $\gamma$ going from $\Delta_x$ to $\Delta_y$ crosses $F$ $n+3$ times.
At the same time since all edges in the chain $E$ are left border edges,
there exists a path (the arc $l$ in Fig.~\ref{Figure:ChainExeptions} on the left)
going from $\Delta_x$ to $\Delta_y$,
which crosses $F$ $n+1$ times.
This contradicts the minimality of the shortcut $\Gamma$.

{\bf The case $k=1$}.
Denote by $z$ the two-sided exceptional crossing
which the chain $E$ passes through.
Then we have two left border edges which are adjacent to $z$
and, by our assumption,  the edges are involved in the chain $E$.
Denote them by $e_s$ and $e_{s+1}$ (see Fig.~\ref{Figure:ChainExeptions} on the right).
Let $\Delta_x,\Delta_y,\Delta(e_i)$ are as above.
By Lemma~\ref{Lemma:DistanceOneSide},
there are two distinct $\gamma$-regions adjacent to the crossing $z$,
these are $\Delta(e_s)$ and the other one which we denote by $\Delta_z$.
By Lemma~\ref{Lemma:DistanceOneSide}
$\rho(\Delta_z,\Delta(e_s))=1$,
i.e., the canonical number of the region $\Delta_z$ (see Remark~\ref{rem:numbering})
either is less by $1$ or is greater by $1$ than the canonical number of the region $\Delta(e_s)$.
Assume the number of $\Delta_z$ is greater than the number of $\Delta(e_s)$.
In this case there exists a path which goes from $\Delta_x$ to $\Delta_z$
which intersects $f$ $s+1$ times.
It means $\rho(\Delta_x,\Delta_z) \leq s+1$.
At the same time, since $\gamma$ is minimal $\rho(\Delta_x,\Delta(e_s)) =s+1$, the number of $\Delta_z$ is less than or equal to the number of $\Delta(e_s)$.
This contradicts our assumption that the number of $\Delta_z$ is greater than the number of $\Delta(e_s)$.
The arguments in the case when the number of $\Delta_z$ is less than the number of $\Delta(e_s)$
are analogous to the arguments above.
The only difference is that in the case it is necessary to compare
the distance from $\Delta_y$ to the $\gamma$-regions adjacent to $z$.
\end{proof}

Lemma~\ref{Lemma:NonEndedChain}
can not be extended directly
to border chains of which endpoints are adjacent to outer edges.
To do this an additional condition is required.

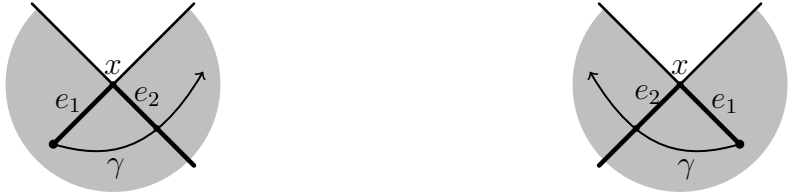
\begin{figure}[h]
	\begin{center}
		\begin{minipage}{0.45\textwidth}
			\begin{center}
				\begin{tikzpicture}[scale=0.3, yscale=-1.0]
					\path[rotate=-45.0,fill=lightgray] (5.2631,7.6748)arc(0.000:67.500:4.762)arc(67.500:135.000:4.762)arc(135.000:202.500:4.762)arc(202.500:270.000:4.762) -- (0.5006,7.6748) -- cycle;
					\path[draw=black,line cap=round,line width=\figuresLineThickness] (2.1922,1.4842) -- (5.7988,5.0908);
					\path[draw=black,line cap=round,line width=\figuresThickLineThickness] (5.7988,5.0908) -- (9.3697,8.6617);
					\draw (3.8, 5.9) node {$e_1$};
					\draw (7.3, 5.6) node {$e_2$};
					\path[draw=black,line cap=round,line width=\figuresLineThickness] (9.3697,1.4842) -- (5.7638,5.0901);
					\path[fill=black] (5.7810,5.0729) circle (\figuresSmallPointSize) node[above] {$x$};
					\path[draw=black,line cap=round,line width=\figuresThickLineThickness] (5.7638,5.0901) -- (3.1360,7.7179);
					\path[fill=black] (3.1360,7.7179) circle (\figuresPointSize);
					\path[draw=black,->,line cap=round,line width=\figuresThinLineThickness] (3.1360,7.7179) .. controls (5.4628,8.3894) and (7.5880,8.2269) .. (9.7756,4.5017);
					\draw (5.9, 8.8) node {$\gamma$};
					\path[fill=black] (7.7270,7.0190) circle (\figuresSmallPointSize);
				\end{tikzpicture}
			\end{center}
		\end{minipage}
		\hfill
		\begin{minipage}{0.45\textwidth}
			\begin{center}
				\begin{tikzpicture}[scale=0.3, yscale=-1.0]
					\path[cm={{-0.70711,-0.70711,-0.70711,0.70711,(0.0,0.0)}},fill=lightgray] (-11.4126,-9.0262)arc(-0.000:67.500:4.762)arc(67.500:135.000:4.762)arc(135.000:202.500:4.762)arc(202.500:270.000:4.762) -- (-16.1751,-9.0262) -- cycle;
					\path[draw=black,line cap=round,line width=\figuresLineThickness] (21.4088,1.4663) -- (17.8021,5.0729);
					\path[draw=black,line cap=round,line width=\figuresThickLineThickness] (17.8021,5.0729) -- (14.2312,8.6439);
					\path[draw=black,line cap=round,line width=\figuresLineThickness] (14.2312,1.4663) -- (17.8371,5.0723);
					\path[xscale=-1.000,yscale=1.000,fill=black] (-17.8200,5.0551) circle (\figuresSmallPointSize) node[above] {$x$};
					\draw (19.8,5.9) node {$e_1$};
					\draw (16.4,5.5) node {$e_2$};
					\path[draw=black,line cap=round,line width=\figuresThickLineThickness] (17.8371,5.0723) -- (20.4649,7.7001);
					\path[xscale=-1.000,yscale=1.000,fill=black] (-20.4649,7.7001) circle (\figuresPointSize);
					\path[draw=black,->,line cap=round,line width=\figuresThinLineThickness] (20.4649,7.7001) .. controls (18.1381,8.3716) and (16.0130,8.2090) .. (13.8253,4.4838);
					\draw (18.1, 8.8) node {$\gamma$};
					\path[xscale=-1.000,yscale=1.000,fill=black] (-15.8739,7.0012) circle (\figuresSmallPointSize);
				\end{tikzpicture}
			\end{center}
		\end{minipage}
		\caption{\label{Figure:LeftRightVertex}Left-sided crossing of the type two (on the left) and right-sided one (on the right)}
	\end{center}
\end{figure}

Let the crossing $x$, which adjacent to  an outer edge of a \fkd, is a crossing of the type~$2$.
Denote $\gamma$-edges adjacent to $x$ by $e_1$ and $e_2$,
where $e_1$ is the outer edge of the \fkd.
The crossing $x$ is called the \emph{left-sided} (resp. the \emph{right-sided}) crossing of the type~$2$,
if starting at $x$ and going along $e_2$
we reach $\gamma$ from the left (resp. from the right)
(see Fig.~\ref{Figure:LeftRightVertex} on the left (resp. on the right)).

\begin{lemma}
\label{Lemma:EndedChain}
Let $F$ is a prime \fkd~with a shortcut $\gamma$,
and $E$ is a true border chain satisfying following conditions:
\begin{enumerate}
\item $E$ do not contain  a true border chain distinct from $E$;
\item If $E$ is a left (resp. right) border chain and an endpoint of $E$ is adjacent to an outer edge of $F$
then the endpoint is a left-sided (resp. right-sided) crossing of the type~$2$;
\item $E$ passes through no two-sided exceptional crossing.
\end{enumerate}
Then $E$ passes through at least $1$ regular crossing.
\end{lemma}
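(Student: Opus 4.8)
The plan is to argue by contradiction, in close parallel with the proof of Lemma~\ref{Lemma:NonEndedChain}. Assume the true border chain $E=\{e_1,\dots,e_n\}$ passes through no regular crossing; we may take $E$ to be a left chain, the right case being symmetric. As in the proof of Lemma~\ref{Lemma:NonEndedChain}, condition~(1) forbids a crossing of type~$2$ and a one-sided exceptional crossing in the interior of $E$ (either would split off a true sub-chain different from $E$), condition~(3) forbids a two-sided exceptional crossing, Lemma~\ref{Lemma:NoFourType} forbids types~$3$ and~$4$, and the contradiction hypothesis forbids a regular crossing of type~$0$. Hence every crossing $E$ passes through is of type~$1$, and each of its two endpoints is of type~$2$ or is a one-sided left exceptional crossing of type~$0$.

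If neither endpoint of $E$ is adjacent to an outer edge, the conclusion is exactly Lemma~\ref{Lemma:NonEndedChain}: condition~(1) is common to both statements, our condition~(3) implies its condition~(3), and the present assumption supplies its condition~(2). So we may assume that at least one endpoint of $E$ is adjacent to an outer edge. Since a left border chain is a connected subpath of the left part of $P_\gamma$, such an endpoint is adjacent either to the first outer edge $f_0$ or to the last outer edge $f_1$; reversing the orientation of $\gamma$ if necessary, assume the beginning $x$ of $E$ is adjacent to $f_0$. By condition~(2), $x$ is then a left-sided crossing of type~$2$; in particular the $\gamma$-region $\Delta_0$ adjacent to the beginning of $F$ is incident to $x$, and the initial arc of $\gamma$ together with $f_0$ bounds a thin strip of $R_\gamma$ issuing from the beginning of $F$.

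It remains to run the ``shortcutting'' argument of Lemma~\ref{Lemma:NonEndedChain} with the beginning endpoint of $F$ substituted for the region ``$\Delta_x$'' that does not exist here. As there, write $\Delta(e_i)$ for the $\gamma$-region adjacent to $e_i$ and $N(e_i)$ for its canonical number (Remark~\ref{rem:numbering}); passing a type~$1$ crossing changes $N$ by at most $1$, and minimality of $\gamma$ makes $N(e_i)$ strictly increasing along $E$, since an index that failed to increase would already give a shorter shortcut exactly as in Lemma~\ref{Lemma:NonEndedChain}. At the far endpoint $y$ we proceed as in Lemma~\ref{Lemma:NonEndedChain}: if $y$ is not on an outer edge, take $\Delta_y$ to be the $\gamma$-region two canonical steps beyond $\Delta(e_n)$ (which exists by the argument there, invoking Lemma~\ref{Lemma:DistanceOneSide} when $y$ is one-sided exceptional); if $y$ is also on an outer edge, use the end endpoint of $F$ and the terminal arc of $\gamma$ symmetrically. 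On one hand, $\gamma$ joins these two loci meeting $F$ exactly $N$ times for the relevant $N$; on the other hand, because every $e_i$ is a left border edge and $x$ is \emph{left}-sided, one can draw a path along the left sides of $e_1,\dots,e_n$ that leaves the chain past $x$ into the strip issuing from the beginning of $F$ and meets $F$ strictly fewer than $N$ times. This contradicts minimality of $\gamma$, proving the lemma.

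I expect the main obstacle to be this last step: checking that the path alongside the left border chain really can be ``closed off'' past the beginning endpoint of $F$ with a genuine saving of intersections. This is exactly where the left-sidedness of $x$ is used and where the argument departs from Lemma~\ref{Lemma:NonEndedChain}: for a \emph{right}-sided type~$2$ crossing on $f_0$ the analogous path would be forced back behind $f_0$ and would meet $F$ one extra time, so the side hypothesis in condition~(2) is essential rather than cosmetic. Making the saving precise amounts to analysing the local picture of Fig.~\ref{Figure:LeftRightVertex} together with the initial arc of $\gamma$ near the beginning of $F$, and — as in the proof of Lemma~\ref{Lemma:DistanceOneSide} — a parity count for an auxiliary closed curve separating the endpoints of $F$ is the natural tool for doing so.
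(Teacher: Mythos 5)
There is a genuine gap, and it sits exactly where you suspected it might: in the sub-case where \emph{both} endpoints of $E$ are adjacent to outer edges of $F$. Your treatment of the other two configurations matches the paper (reduction to Lemma~\ref{Lemma:NonEndedChain} when neither endpoint is on an outer edge; the $k=0$ shortcutting trick with the beginning of $F$ playing the role of $\Delta_x$ when exactly one endpoint is). But when both $u$ (adjacent to $f_0$) and $v$ (adjacent to $f_1$) are left-sided type-$2$ endpoints of $E$ and all interior crossings of $E$ have type~$1$, the parallel path you propose does \emph{not} meet $F$ strictly fewer times than $\gamma$. Count it: with the cyclic order $f_0,g_0,e_1,b$ at $u$ one gets $\Delta(e_1)=\Delta_1$, and symmetrically $\Delta(e_n)=\Delta_{\hh(F)-1}$; since $N$ changes by $\pm 1$ across each type-$1$ crossing, $\hh(F)\leq n+1$. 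The pushoff of $f_0\cup e_1\cup\dots\cup e_n\cup f_1$ (on either side) crosses $F$ exactly once near each of the $n+1$ crossings of that path and nowhere else, i.e.\ exactly $n+1\geq \hh(F)$ times. So minimality of $\gamma$ is not violated, and no contradiction is produced; the parity count you invoke at the end does not rescue this, because the issue is not parity but an exact tie in the intersection count.

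The configuration is in fact impossible, but for a different reason, and this is the argument the paper uses: at a type-$1$ crossing the two incident border edges of $P_\gamma$ are opposite, and at a left-sided type-$2$ crossing the outer edge and the adjacent left border edge are opposite; hence $f_0\cup E\cup f_1$ is a path from the beginning of $F$ to its end that goes straight through every crossing it meets, i.e.\ it follows the immersed segment itself. It would therefore have to be all of $F$ --- yet it omits the non-outer $\gamma$-edges at $u$ and $v$, so the complement of the path in $F$ would be a nonempty union of closed curves and $F$ would be a generic immersion of a disconnected $1$-manifold, contradicting the definition of a \fkd. You need to replace your ``symmetric shortcutting at the far end'' step by an argument of this connectivity type; as written, that step asserts a saving of intersections that does not exist.
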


\begin{proof}
Let $E$ be a true left border chain
(the proof in the case of the right border chain is completely analogous).

Firstly consider the case in which both endpoints of $E$ are adjacent to outer edges of $F$.
We assume $E$ does not pass through a regular crossing
and show that it is impossible.
By the first condition of Lemma~\ref{Lemma:EndedChain}
$E$ does not pass through neither a crossing of the type~$2$ nor a one-sided exceptional crossing.
By the third condition $E$ does not pass through two-sided exceptional crossings.
Hence all crossings in $E$ except its endpoints are of the type~$1$.
By hypothesis, both endpoints of $E$ are left-sided crossings of the type~$2$.
Hence the union of $E$ with outer edges of $F$ forms a path,
which goes from the beginning of $F$ to its end
and crosses the rest part of the diagram transversely.
Thus the diagram (which, by definition, is a generic immersion of the segment into $S^2$)
is indeed a generic immersion of a disconnected $1$-manifold,
i.e., in this case the diagram in question is not a \fkd.

If both endpoints of $E$ are not adjacent to outer edges,
then required property follows from Lemma~\ref{Lemma:NonEndedChain}.
So it is remains to consider the case, when exactly one of endpoints (say the beginning)
is adjacent to an outer edge of $F$.
Then, by hypothesis, the endpoint is left-sided crossing of the type~$2$.
In such a situation we can use the same trick as in the case of $k=0$ in the proof of Lemma~\ref{Lemma:NonEndedChain}.
\end{proof}

\begin{lemma}
\label{Lemma:InnerOneSideVertices}
Let $F$ be a prime \fkd~with a minimal shortcut $\gamma$,
and $E$ be a left (resp. right) border chain
which starts and ends at the same one-sided exceptional left (resp. right) crossing of the type~$0$.
Then
\begin{enumerate}
\item $E$ passes through not more than one crossing of the type $2$,
\item If $E$ passes through an exceptional crossing of the type~$0$
distinct from its endpoints,
then $E$ passes through the crossing twice.
\end{enumerate}
\end{lemma}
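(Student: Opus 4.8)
The plan is to read off both statements from the global structure of $R_\gamma$. First I would set up that structure. By Theorem~\ref{Theorem:GammaMinimum} and the discussion in Section~\ref{sec:BorderChains}, for a prime $F$ the $\hh(F)$ non-outer $\gamma$-edges are pairwise non-crossing chords of the disk $R_\gamma$ (they do not cross because $F$ is embedded), and, since each interior $\gamma$-region carries exactly two $\gamma$-edges while $\Delta_0$ and $\Delta_{\hh(F)}$ carry one each, these chords cut $R_\gamma$ into the \emph{linear} chain $\Delta_0,\Delta_1,\dots,\Delta_{\hh(F)}$ of the canonical numbering (Remark~\ref{rem:numbering}), threaded left-to-right by the diameter $\gamma\cup(\text{outer edges})$. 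Hence the diameter splits the boundary circle $P_\gamma=\partial R_\gamma$ into a left half and a right half, the left half being the concatenation, in this order, of the left parts of $\partial\Delta_0,\dots,\partial\Delta_{\hh(F)}$ (the \emph{left arcs}), joined at crossings --- with the proviso that the left arc of $\Delta_i$ shrinks to a single point, a type~$2$ crossing, exactly when $\gamma$ turns left at $\Delta_i$ (its two $\gamma$-edges then sharing their left endpoint). In particular a type~$2$ crossing lies on the left half of $P_\gamma$ if and only if it is such a left-turn corner, and each region contributes at most one. I would also record that self-tangency chords do not cross: writing a homeomorphism $R_\gamma\cong D^\circ$ and extending it to $\overline D\to\overline{R_\gamma}$, each exceptional crossing has exactly two preimages on $\partial D$ and so spans a chord of $D$, and two such chords for distinct exceptional crossings cannot cross, for otherwise their images would be two closed curves in $S^2$ meeting transversally in exactly one point. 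Equivalently, on the circle $P_\gamma$ the two appearances of one exceptional crossing are never linked with the two appearances of another.

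Next I would analyse the chain $E$. By Lemma~\ref{Lemma:DistanceOneSide} the common endpoint $x_0$ of $E$ is adjacent to $\gamma$-regions $\Delta_p$ and $\Delta_{p+2}$ for some $p$, so its two appearances $a,b$ on $P_\gamma$ lie on the left arcs of $\Delta_p$ and of $\Delta_{p+2}$ (on the \emph{left} half, since $x_0$ is a left crossing). A left border chain from $x_0$ to $x_0$ is an arc of $P_\gamma$ joining $a$ to $b$ and built from left border edges, hence contained in the left half; the only such arc sweeps a terminal piece of the left arc of $\Delta_p$, then the left arc of $\Delta_{p+1}$ --- replaced, in the degenerate case, by the single type~$2$ left-turn corner at $\Delta_{p+1}$ --- then an initial piece of the left arc of $\Delta_{p+2}$. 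Moreover $\gamma$ cannot turn left at $\Delta_p$ or at $\Delta_{p+2}$, since then the corresponding left arc would be a single type~$2$ point and could not be the type~$0$ crossing $x_0$.

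From here both parts follow quickly. For Part~1, along the left half of $P_\gamma$ the type~$1$ and type~$2$ crossings occur only at the junctions between consecutive left arcs, so the pieces of the left arcs of $\Delta_p,\Delta_{p+1},\Delta_{p+2}$ traversed by $E$ contain only type~$0$ crossings; with the previous paragraph this leaves the left-turn corner at $\Delta_{p+1}$ as the only type~$2$ crossing $E$ can meet, and it meets it at most once. For Part~2, if $E$ met an exceptional crossing $y\neq x_0$ exactly once, then one of the two appearances of $y$ on $P_\gamma$ would lie in the open arc of $P_\gamma$ bounded by $a$ and $b$ which constitutes $E$, and the other outside it, so that $a,b$ and the two appearances of $y$ would be linked --- contradicting the non-linking property above; hence $E$ meets $y$ twice (and, incidentally, $y$ must be a left crossing adjacent to $\Delta_p$ and $\Delta_{p+2}$).

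The hard part will be the first step: rigorously establishing the linear-chain model of $R_\gamma$ and the exact behaviour of $P_\gamma$ at left and right $\gamma$-turns, in particular treating the end regions $\Delta_0$ and $\Delta_{\hh(F)}$ together with their outer edges, and matching the resulting description of the left half of $P_\gamma$ with the notions of $\gamma$-edge, border edge and crossing type from Sections~\ref{sec:TypesOfCrossings}--\ref{sec:BorderEdges}.
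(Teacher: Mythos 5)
Your argument is correct, but it reaches the two claims by a genuinely different route than the paper. The paper's proof is a short local argument: it takes the embedded circle $C$ through $x$ that crosses the two $\gamma$-edges $g_1,g_2$ separating the regions $\Delta_p,\Delta_{p+1},\Delta_{p+2}$, observes that $|C\cap F|=3$, and deduces directly that (a) no $\gamma$-edge other than $g_1,g_2$ is adjacent to a crossing of $E$ (giving Part~1 by cases on whether $g_1,g_2$ share an endpoint) and (b) the disk bounded by $C$ containing $E$ contains no border edges outside $E$, since $\partial R_\gamma$ is connected and meets $C$ only at $x$ (giving Part~2). You instead build the global model of $R_\gamma$ as a disk cut by non-crossing chords into the linearly ordered regions, identify the type~$2$ crossings on the left half of $P_\gamma$ with left-turn corners, and derive Part~2 from the unlinking of self-tangency pairs on $\partial R_\gamma$ via the parity of intersections of two embedded circles in $S^2$. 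Both proofs rest on the same inputs (Theorem~\ref{Theorem:GammaMinimum}, Lemma~\ref{Lemma:NoFourType}, Corollary~\ref{corol:NoLoop}, Lemma~\ref{Lemma:DistanceOneSide}, Remark~\ref{rem:numbering}); yours buys a reusable structural description of $P_\gamma$ at the cost of the setup you rightly flag as the hard part, while the paper's single auxiliary circle is more economical. Two small points to tighten: translating the conclusion $\rho(\Delta_1,\Delta_2)=2$ of Lemma~\ref{Lemma:DistanceOneSide} into ``canonical numbers differ by exactly $2$'' needs the observation that $\rho(\Delta_i,\Delta_j)=|i-j|$ (which follows from minimality of $\gamma$ and the triangle inequality, as the paper uses implicitly in the proof of Lemma~\ref{Lemma:NonEndedChain}); and in Part~1 you should say explicitly why a junction crossing of type~$2$ must be the corner of one of $\Delta_p,\Delta_{p+1},\Delta_{p+2}$ --- namely, the two $\gamma$-edges at a type~$2$ crossing necessarily bound a common $\gamma$-region (an opposite pair would force type~$4$ via Theorem~\ref{Theorem:GammaMinimum}(2), contradicting Lemma~\ref{Lemma:NoFourType}), so a junction on $g_1$ or $g_2$ can only be a corner of $\Delta_p$, $\Delta_{p+1}$ or $\Delta_{p+2}$, and the first and last are excluded because $x_0$ lies in the interiors of those left arcs.
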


\begin{figure}[h]
	\begin{center}
		\begin{tikzpicture}[scale=0.2, yscale=-1.0]
			\path[fill=lightgray] (33.1187,14.7239) .. controls (32.4832,12.8510) and (30.6772,9.6738) .. (29.1388,7.0318) .. controls (26.4975,2.4956) and (25.4265,1.8814) .. (21.3026,2.6604) .. controls (21.0471,4.0492) and (19.7248,5.4491) .. (18.2960,6.9341) .. controls (21.1347,9.8195) and (24.4781,13.0397) .. (21.8900,16.3859) .. controls (22.6287,17.2478) and (23.3645,18.3290) .. (23.9055,19.3706) .. controls (26.5345,18.8430) and (29.6774,19.1813) .. (32.1186,21.8241) .. controls (35.0985,20.6301) and (33.7541,16.5968) .. (33.1187,14.7239) -- cycle;
			\path[fill=lightgray] (5.7169,8.8515) .. controls (7.8729,5.7421) and (9.7270,0.1890) .. (15.1455,2.5974) .. controls (15.3899,3.9256) and (16.7754,5.3822) .. (18.2960,6.9341) .. controls (15.6905,9.6418) and (12.7523,12.6394) .. (15.2919,16.3889) .. controls (14.5235,17.2827) and (13.7843,18.4236) .. (13.2294,19.6756) .. controls (10.4430,19.1183) and (6.7506,20.5852) .. (5.0011,23.7485) .. controls (0.0625,20.6104) and (3.4127,12.1746) .. (5.7169,8.8515) -- cycle;
			\path[draw=black,line cap=round,line width=\figuresLineThickness] (15.1502,2.5963) .. controls (15.3941,3.9221) and (16.7748,5.3756) .. (18.2994,6.9249);
			\path[draw=black,line cap=round,line width=\figuresThickLineThickness] (18.2994,6.9249) .. controls (21.1403,9.8121) and (24.4807,13.0320) .. (21.8949,16.3775);
			\path[draw=black,line cap=round,line width=\figuresLineThickness] (21.8949,16.3775) .. controls (21.4926,16.8979) and (20.9470,17.4214) .. (20.2337,17.9472);
			\path[draw=black,line cap=round,line width=\figuresLineThickness] (21.3046,2.6494) .. controls (21.0485,4.0410) and (19.7215,5.4437) .. (18.2918,6.9340);
			\path[draw=black,line cap=round,line width=\figuresThickLineThickness] (18.2918,6.9340) .. controls (15.6910,9.6452) and (12.7505,12.6463) .. (15.3006,16.3977);
			\path[draw=black,line cap=round,line width=\figuresLineThickness] (15.3006,16.3977) .. controls (15.5598,16.7791) and (15.8757,17.1681) .. (16.2545,17.5654);
			\path[draw=black,line cap=round,line width=\figuresLineThickness] (17.5120,14.6398) .. controls (16.8306,14.9031) and (16.0480,15.5317) .. (15.2979,16.3958);
			\path[draw=black,line cap=round,line width=\figuresThickLineThickness] (15.2979,16.3958) .. controls (14.5243,17.2871) and (13.7853,18.4289) .. (13.2278,19.6791);
			\path[draw=black,line cap=round,line width=\figuresLineThickness] (13.2278,19.6791) .. controls (12.9721,20.2527) and (12.7545,20.8492) .. (12.5893,21.4547);
			\path[draw=black,line cap=round,line width=\figuresLineThickness] (14.6211,20.1686) .. controls (11.8194,18.6501) and (7.0658,20.0227) .. (5.0058,23.7475);
			\path[draw=black,line cap=round,line width=\figuresLineThickness] (19.8457,14.7247) .. controls (20.4203,14.8951) and (21.1603,15.5323) .. (21.8921,16.3811);
			\path[draw=black,line cap=round,line width=\figuresThickLineThickness] (21.8921,16.3811) .. controls (22.6376,17.2460) and (23.3747,18.3307) .. (23.9196,19.3653);
			\path[draw=black,line cap=round,line width=\figuresLineThickness] (23.9196,19.3653) .. controls (24.3120,20.1105) and (24.6048,20.8297) .. (24.7292,21.4222);
			\path[draw=black,line cap=round,line width=\figuresLineThickness] (20.8340,20.4592) .. controls (23.6442,18.9787) and (28.6085,18.0110) .. (32.1206,21.8130);
			\path[draw=black,line cap=round,line width=\figuresThinLineThickness] (18.1469,16.1600) circle (9.25);
			\draw (18.1, 23.7) node {$C$};
			\path[draw=black,line cap=round,line width=\figuresThinLineThickness] (9.3643,23.7999) .. controls (7.7131,22.7557) and (6.4677,21.0869) .. (5.9366,19.2068) .. controls (5.4055,17.3267) and (5.5938,15.2528) .. (6.4547,13.4991);
			\path[draw=black, <-,line cap=round,line width=\figuresThinLineThickness] (6.4547,13.4991) .. controls (7.1438,12.0954) and (8.2238,10.9286) .. (9.2793,9.7748) .. controls (10.3348,8.6210) and (11.3974,7.4333) .. (12.0399,6.0076) .. controls (12.4925,5.0030) and (12.7226,3.8988) .. (12.7087,2.7970);
			\path[fill=black] (18.3000,6.9255) circle (\figuresSmallPointSize) node[above] {$x$};
			\draw (10.8, 6.9) node {$\gamma$};
			\draw (25.2, 6.9) node {$\gamma$};
			\draw (16.3, 10.9) node {$E$};
			\draw (8.9, 21.3) node {$g_1$};
			\draw (28.4, 20.4) node {$g_2$};
			\path[fill=black] (15.2987,16.3949) circle (\figuresSmallPointSize);
			\path[fill=black] (21.8921,16.3812) circle (\figuresSmallPointSize);
			\path[fill=black] (23.9196,19.3653) circle (\figuresSmallPointSize);
			\path[fill=black] (13.2294,19.6756) circle (\figuresSmallPointSize);
			\path[fill=black] (6.9558,21.4256) circle (\figuresSmallPointSize);
			\path[fill=black] (30.1531,20.2268) circle (\figuresSmallPointSize);
			\path[draw=black,dash pattern=on 5.0 off 5.0,line cap=round,line width=\figuresThinLineThickness] (9.3643,23.7999) .. controls (14.2806,26.9102) and (20.5830,27.6792) .. (26.8798,24.3349);
			\path[draw=black,line cap=round,line width=\figuresThinLineThickness] (29.7894,14.0341) .. controls (29.1003,12.6304) and (28.0203,11.4637) .. (26.9648,10.3098) .. controls (25.9093,9.1560) and (24.8466,7.9683) .. (24.2042,6.5426) .. controls (23.7516,5.5380) and (23.5215,4.4338) .. (23.5354,3.3320);
			\path[draw=black, ->,line cap=round,line width=\figuresThinLineThickness] (26.8798,24.3349) .. controls (28.5310,23.2908) and (29.7764,21.6219) .. (30.3075,19.7418) .. controls (30.8386,17.8617) and (30.6503,15.7879) .. (29.7894,14.0341);
		\end{tikzpicture}
		\caption{\label{Figure:EndedVertexZero} The circle $C$ intersects with edges $g_1$ and $g_2$}
	\end{center}
\end{figure}
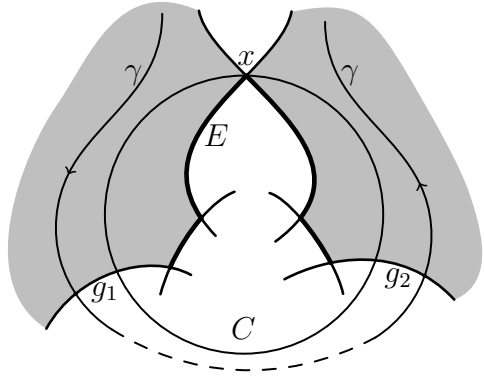

\begin{proof}
Denote by $x$ the crossing at which $E$ starts and ends. 
By Lemma~\ref{Lemma:DistanceOneSide}
the distance between two $\gamma$-regions adjacent to $x$ is equal to $2$.
Hence $\gamma$ traversing from one of these $\gamma$-regions to another
crosses exactly $2$ $\gamma$-edges $g_1,g_2$.
Consider an embedded circle $C$
(see Fig.~\ref{Figure:EndedVertexZero})
which passes through $x$ and crosses the edges $g_1,g_2$ transversely in their internal points.
Since $C$ and $F$ share exactly $3$ points,
there is no $\gamma$-edges except $g_1,g_2$
adjacent to a crossing in $E$.
Hence, if $g_1,g_2$  are adjacent to two distinct crossings 
which the chain $E$ passes through,
then both these crossings are of the type~$1$.
If $g_1,g_2$ are adjacent to the same crossing in $E$,
then the crossing is of the type~$2$,
and there is no more crossings of the type~$2$ in $E$.
This completes the proof of the first part of Lemma~\ref{Lemma:InnerOneSideVertices}.

Denote by $D$ the disk bounded by the circle $C$, which contains the chain $E$.
Note the disk does not contain border edges except edges forming the chain $E$.
The fact is a consequence of following two observations:
\begin{enumerate}
\item The set $\partial R_{\gamma}$ is connected.
\item $\partial R_{\gamma} \cap C =\{ x\}$,
hence  no border chain but $E$ crosses the circle $C$.
\end{enumerate}
Therefore, if an exceptional crossing $y$ lies inside $D$
then all $4$ border edges adjacent to $y$ are involved in $E$,
thus, as required,  $E$ passes $y$ twice.
\end{proof}

\subsection{The lower bound for prime flat knotoid diagrams}
\label{sec:ProofForPrime}

The theorem below is the key step in the proof
of Theorem~\ref{theorem:MainResult}.
It states an inequality~\eqref{eq:crnF geq2hF}
which is like to inequality~\eqref{eq:Main}.
The difference between them is that in~\eqref{eq:crnF geq2hF}
we compare two characteristics of the same diagram
while in~\eqref{eq:Main}
we deal with two characteristics of an equivalence class
which can be reached on two distinct representatives.

\begin{theorem}
\label{Theorem:VertexTypes}
If $F$ is a prime \fkd~then
\begin{equation}
\label{eq:crnF geq2hF}
\crn(F) \geq 2 \hh(F).
\end{equation}
\end{theorem}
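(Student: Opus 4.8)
The plan is to obtain~\eqref{eq:crnF geq2hF} from Theorem~\ref{Theorem:Equivalence}. Fix a minimal shortcut $\gamma$ of the prime \fkd~$F$; by that theorem it suffices to prove the inequality~\eqref{eq:c_0 geq c_2}, i.e. $c_0(F,\gamma)+2\ge c_2(F,\gamma)$. I would prove this last inequality by a charging argument on the boundary path $P_{\gamma}=\partial R_{\gamma}$ of the $\gamma$-domain. Since $F$ is prime, $R_{\gamma}$ is an open disk and, as noted in Section~\ref{sec:BorderChains}, the exceptional crossings are precisely the self-tangency points of $P_{\gamma}$. The first task is to pin down the local behaviour of $P_{\gamma}$ at each crossing, using Lemma~\ref{Lemma:NoFourType}, Lemma~\ref{Lemma:DistanceOneSide} and the neighbourhood pictures of Figure~\ref{Figure:VertexTypesNeighborhoods}: $P_{\gamma}$ passes through each type~$1$, each type~$2$ and each regular crossing exactly once, through each exceptional crossing exactly twice, and misses every regular crossing isolated from $\gamma$; moreover $P_{\gamma}$ switches side of $\gamma$ only at the two crossings $a$ and $b$ adjacent to the outer edges, so that each type~$1$, each type~$2$ and each one-sided exceptional crossing lies wholly in the left part or wholly in the right part of $P_{\gamma}$, whereas a two-sided exceptional crossing has one passage in each part.

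The next step is to cut $P_{\gamma}$ into pieces at all type~$2$ crossings, at both passages through every one-sided exceptional crossing, and at $a$ and $b$. Then no type~$2$ crossing and no one-sided exceptional crossing lies in the interior of a piece, so each piece is a left or a right border chain containing no true border chain other than itself, whence hypothesis~(1) of Lemmas~\ref{Lemma:NonEndedChain} and~\ref{Lemma:EndedChain} is automatic. If no endpoint of a piece is adjacent to an outer edge, hypothesis~(2) of Lemma~\ref{Lemma:NonEndedChain} holds; for hypothesis~(3) one first shows, by the same kind of distance argument as in the proof of Lemma~\ref{Lemma:NonEndedChain} (and using Lemma~\ref{Lemma:InnerOneSideVertices} for a piece running from a one-sided exceptional crossing back to itself, which forces any interior exceptional crossing to be traversed twice), that such a piece carries at most one two-sided exceptional crossing; hence Lemma~\ref{Lemma:NonEndedChain} applies and the piece passes through at least one regular crossing. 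A piece with an endpoint adjacent to an outer edge is covered by Lemma~\ref{Lemma:EndedChain} whenever that endpoint is a correctly sided type~$2$ crossing (its hypothesis on two-sided exceptional crossings being secured as before); the remaining pieces meeting $a$ or $b$ — at most two in all, and accounting exactly for the two outer edges, which are absent from $P_{\gamma}$ — are the only pieces to which no lemma applies, and they need contain no regular crossing.

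It remains to count. To each piece covered by a lemma assign a regular crossing it passes through; since $P_{\gamma}$ meets each regular crossing at most once, these assignments are injective, so the number of covered pieces is at most $c_0(F,\gamma)$. On the other hand, cutting the left and the right part of $P_{\gamma}$ as above and deleting the (at most two) uncovered end-pieces leaves at least $c_2(F,\gamma)-2$ pieces: each type~$2$ crossing in the interior of a part is the shared endpoint of two pieces of that part, a type~$2$ crossing located at $a$ or $b$ still contributes one covered end-piece there, and the additional cuts at one-sided and two-sided exceptional crossings only raise the count; a short case analysis of the type of $a$ and of $b$ (type~$1$, or correctly versus incorrectly sided type~$2$) shows that in total at most two pieces are forfeited. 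Combining the two bounds gives $c_0(F,\gamma)\ge c_2(F,\gamma)-2$, which is~\eqref{eq:c_0 geq c_2}, and Theorem~\ref{Theorem:Equivalence} converts this into~\eqref{eq:crnF geq2hF}.

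The hard part will be the bookkeeping concentrated in the second step: establishing the precise local picture of $P_{\gamma}$ at every crossing type, ensuring that after the cuts and the auxiliary subdivisions each piece either meets all the hypotheses of one of Lemmas~\ref{Lemma:NonEndedChain}, \ref{Lemma:EndedChain}, \ref{Lemma:InnerOneSideVertices} or is one of the at most two end-pieces at $a$ and $b$, and checking the count of pieces near $a$ and $b$. Once that is in place, the remaining arithmetic is routine.
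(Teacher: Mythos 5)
Your overall strategy is the same as the paper's: reduce to the inequality $c_0(F,\gamma)+2\ge c_2(F,\gamma)$ via Theorem~\ref{Theorem:Equivalence}, decompose $P_{\gamma}$ into border chains, use Lemmas~\ref{Lemma:NonEndedChain}--\ref{Lemma:InnerOneSideVertices} to locate type-$0$ crossings on the chains, and count. But there is a genuine gap at the step where you claim that, after cutting at all type-$2$ and one-sided exceptional crossings, each piece ``carries at most one two-sided exceptional crossing'' and hence falls under Lemma~\ref{Lemma:NonEndedChain} or~\ref{Lemma:EndedChain}. That is not what the distance argument gives: the proof of Lemma~\ref{Lemma:NonEndedChain} assumes the chain contains \emph{no regular crossing} and derives a contradiction only when the number of two-sided exceptional crossings on it is $0$ or $1$; with two or more such crossings the canonical region numbers can increase at one of them and decrease at another, no contradiction arises, and the chain may genuinely contain no regular crossing at all. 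The paper's proof accepts this: its chains of type~(ii) are exactly those passing through $2$ or more two-sided exceptional crossings, and it must assign to them a \emph{two-sided exceptional} crossing as the value of the map $Z$, with a separate injectivity argument (such a crossing is visited by one left and one right chain, so one compares $l+r$ chains against $\max(2l,2r)$ available crossings). Since you only ever charge pieces to \emph{regular} crossings, these pieces are unaccounted for and your bound $\text{(covered pieces)}\ge c_2(F,\gamma)-2$ breaks.

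A second, related gap is at the outer crossings. Lemma~\ref{Lemma:EndedChain} requires the chain to pass through \emph{no} two-sided exceptional crossing, so a correctly-sided end-piece passing through exactly one such crossing is covered by neither lemma; in the worst case (both outer crossings right-sided of type~$2$, each right end-chain carrying exactly one two-sided exceptional crossing and no regular crossing) four end-pieces are forfeited rather than your ``at most two'', and no assignment of type-$0$ crossings to chains works for the chosen shortcut. This is precisely the paper's case~4.2.5, which it can only resolve by \emph{replacing} the minimal shortcut $\gamma$ with another minimal shortcut $\gamma'$ that turns one outer crossing into a left-sided one. Your proposal has no mechanism for changing the shortcut, so this configuration defeats it. To repair the argument you would essentially have to reintroduce the paper's machinery: admit exceptional crossings as targets of the charging map, prove injectivity by the $\max(2l,2r)$ count, and dispose of the degenerate outer-crossing configuration by a change of shortcut.
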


\begin{proof}
Fix a minimal shortcut $\gamma$ of the \fkd~$F$.
All objects such as $\gamma$-edges, $\gamma$-regions, left/right border chains and so on
are considered with respect to the shortcut.
Therefore, for shortness we can sometimes omit the letter $\gamma$ in using notation.

We need an additional definitions.
Given a border chain starting and ending at the same one-sided exceptional crossing $x$
and a crossing $y$ which the chain passes through,
in this case we will say that the crossing $x$ \emph{frames} the crossing $y$,
or, equivalently, that the crossing $y$ is \emph{framed} by the crossing $x$.
Recall that, by definition, a border chain can not contain left and right border edges at the same time.
Thus a one-sided crossing $x$ determines exactly one boundary chain
which starts and ends at $x$. Therefore, $x$ frames all crossings the chain passes through.
Since in our terminology a border chain does not passes through its endpoints,
$x$ does not frame itself.
If $x$ frames $y$ and $y$ is also one-sided exceptional crossing,
then by Lemma~\ref{Lemma:InnerOneSideVertices}
the border chain starts and ends at $y$ is contained in the border chain
starting and ending at $x$.
So the definition above does not depend on what entry of $y$
in $P_{\gamma}$ we use.
An exceptional one-sided crossing $x$ is called \emph{maximal}
if there is no crossing framing $x$.

Denote by $\mathcal{C}_0$ the set consisting of all maximal exceptional one-sided crossings
and by $\mathcal{C}_2$ the set consisting of all crossings of the type~$2$
which are not framed by an exceptional one-sided crossing.

The crossings involved in $\mathcal{C}_0 \cup \mathcal{C}_2$
divide the path $P_{\gamma}$ into pairwise disjoint parts (chains)
(such a chains are disjoint if they do not share an edge, see Remark~\ref{rem:Ambiguity}).
Most of them are either left or right border chains,
but one or two parts can contain a left and right border edges at the same time
and thus are not neither left nor right  border chains.
The latter situation occurs when a crossing adjacent to an endpoint (or both such a crossings)
does not involved in $\mathcal{C}_0 \cup \mathcal{C}_2$.
It is necessary to recall that if $x \in \mathcal{C}_0$ then $P_{\gamma}$
passes through $x$ twice, and there are three chains
adjacent to $x$: one of them starts and ends at $x$ and two other have $x$ as one of their  endpoints.

Firstly we will prove that
there exists a minimal shortcut $\gamma$ of $F$ for which
\begin{equation} \label{eq:c_0 geq q}
c_0(F,\gamma) \geq q -2
\end{equation}
where $q$ is the number of pairwise disjoint (in the sense above) parts into which
$\mathcal{C}_0 \cup \mathcal{C}_2$ divides $P_{\gamma}$.
To this end we will construct a map $Z$
carrying the chains $E_1,\ldots,E_q$ (except one or two)
to crossings of the type~$0$
such that the images of any two distinct chains are distinct.

Clearly, \eqref{eq:c_0 geq q}
holds if $q \leq 2$.
So below we assume $q >2$.

Denote by $u,v \in \partial R_{\gamma}$ crossings adjacent to the first and to the last edges of $F$, respectively.
(We emphasize that $u$ and $v$ are  not the beginning and the end of $F$, they are the other endpoints of outer edges.)
Below we consider four cases depending on
whether $u$ and $v$ are elements of $\mathcal{C}_0 \cup \mathcal{C}_2$.
Note $u$ and $v$ can not be of the type~$0$, because an outer edge is by definition an $\gamma$-edge.
Hence $u$ (resp. $v$) belongs to $\mathcal{C}_0 \cup \mathcal{C}_2$
if and only if the crossing belongs to $\mathcal{C}_2$.

{\bf 1. $u \not\in \mathcal{C}_2, v \not\in \mathcal{C}_2$.}
In this case the crossings $u$ and $v$ lie inside some chains under consideration.
The chains are not necessary distinct.
Thus we have not less than $q-2$ chains
$E_1,\ldots,E_s,s \geq q-2,$
which do not pass through neither $u$ nor $v$.
No of these chains can contain left and right border edges at the same time
(that is because $u$ and $v$ are the only two crossings
which are adjacent to both left and right border edges at the same time).
Therefore, each of chains $E_1,\ldots,E_s$ is either left or right border chain.

The set of chains $E_1,\ldots,E_s$
can be decomposed into three (possibly empty) subsets (types):
\begin{itemize}
\item [(i)] The chains starting and ending at the same crossing belonging to $\mathcal{C}_0$.
\item [(ii)] The chains passing through $2$ or more two-sided exceptional crossings.
\item [(iii)] The chains passing through not more than $1$ two-sided exceptional crossing.
\end{itemize}
Now we define $Z(E_j),1 \leq j \leq s$,
i.e., we assign a crossing of the type~$0$ (the crossing can be both exceptional and regular)
to each of chains in question.
The rule of the assigning  depends on the type of the chain.
\begin{equation} \label{def:Z}
\begin{tikzpicture}[baseline]
\node [text width=0.8\textwidth]
{
The crossing of the type~$0$ which the map $Z$ assign to a chain is given by following rule:
\begin{itemize}
\item A chain of the type~(i) $\to$  The one-sided exceptional crossing at which the chain starts and ends.
\item A chain of the type~(ii) $\to$ A  two-sided exceptional crossings which the chain passes through.
\item A chain of the type~(iii) $\to$ A regular crossing of the type~$0$ which the chain passes through.
\end{itemize}
};
\end{tikzpicture}
\end{equation}

It is necessary to explain how to make the map injective.
Firstly note that we assign a crossings of different types (one-sided exceptional, two-sided exceptional and regular)
to chains of different type
(type (i),(ii) and (iii), respectively).

(i) Two different chains of the first type can not starts and ends
at the same one-sided exceptional crossing,
because if such two chains exist, then the union of these chains is equal to $P_{\gamma}$.
Hence at least one of the chains passes through $u$ or $v$,
while such a chains are excluded from those for which we define the map $Z$.

(ii) A two-sided exceptional crossing can lie in two chains of this type
(one of them is left border chain and the other is right one).
Let we have $l$ and $r$ left and right border chains of the type~(ii), respectively.
Since each chain in question passes through at least $2$
two-sided exceptional crossings,
the total number of such a crossings is greater than or equal to $\max(2l,2r)$.
Hence an injectivity can be established because the total number
of the chains is equal to $l+r$.

(iii) In the case under consideration there are no chains starting at $u$ or ending at $v$. Hence all chains of the type~(iii) (if any) satisfy the conditions of Lemma~\ref{Lemma:NonEndedChain}. Hence each of the  chain passes through a regular crossing of the type~$0$
which is distinct from the endpoints of the chain
and thus does not belong to any other chain under consideration.

{\bf 2. $u \in \mathcal{C}_2, v \not\in \mathcal{C}_2$.}
Let the chains $E_1,E_2$ are the left and the right chains adjacent to $u$, respectively,
and the chain $E_3$ passes through $v$.
The crossing $u$ is either left or right (say left) crossing of the type $2$.
There are three possibilities.

{\bf 2.1.}
$E_1 =E_3$, i.e.,
the chain $E_1$ passes through the crossing $v$.
In this case all chains distinct from $E_1,E_2$(if any) are right border chains.
The map $Z$ can be defined using the rule~\eqref{def:Z} as follows.
The chains $E_1$ and $E_2$ are excluded.
All other chains are regarded as a chains of the type either~(i) or~(ii) or~(iii)
with respect to the definition above.
The injectivity of the map is obvious because we have right border chains only.

{\bf 2.2.}
The chain $E_1$ do not pass through the crossing $v$
(i.e., $E_1 \not=E_3$)
and do not pass through a two-sided exceptional crossing
(it can not pass through one-sided exceptional crossing by construction).
In this case $E_1$ satisfy the conditions of Lemma~\ref{Lemma:EndedChain}.
Hence the chain passes through a regular crossing.
Therefore, we can define the map $Z$ as follows.
Two chains ($E_2,E_3$) are excluded and the chain $E_1$ is viewed as a chain of the type (iii).
All other chains (if any) are regarded in accordance with the rule~\eqref{def:Z}.
The arguments concerning the injectivity of resulting map
coincide with those in the case~1 above.

{\bf 2.3.}
The chain $E_1$ do not pass through the crossing $v$
(i.e., $E_1 \not=E_3$)
and passes through at least $1$ one-sided exceptional crossing of the type~$0$.
In this case we again can use the rule~\eqref{def:Z} regarding $E_1$ as a chain of the type~(ii)
but it is necessary to explain an injectivity of the resulting map.
Denote by $l$ and $r$ the numbers of left and right chains
of the type (ii), respectively.
Thus we have $l+r+1$ chains of the type (ii)
(here the chain $E_1$ is added).
So left chains pass through at least $2l+1$ two-sided exceptional crossings,
while right chains pass through $2r$ such a crossings.
But left and tight chains pass through the same two-sided exceptional crossing
(left chains pass through them on the left while right chains pass on the right),
thus the total number of such a crossings is greater than or equal to $\max(2l+1,2r)$.
Hence a injectivity can be established in both cases: $l \leq r$ and $l >r$.

{\bf 3. $u \not\in \mathcal{C}_2, v \in \mathcal{C}_2$.}
The case is completely analogous to the case~2.

{\bf 4. $u \in \mathcal{C}_2, v \in \mathcal{C}_2$.}
Denote by $E_1,E_2,E_3,E_4$ the border chains
such that $E_1,E_2$ are adjacent to $u$, $E_3,E_4$ are adjacent to $v$,
$E_1,E_3$ are left, $E_2,E_4$ are right.
Since $u$ and $v$ are crossings of the type~$2$,
each of them can be either left-sided or right-sided,
and it is necessary to consider following situations.

{\bf 4.1. $u$ is left-sided while $v$ is right-sided.}
(The case when $u$ is right-sided, $v$ is left-sided is completely analogous.)

{\bf 4.1.1.}
Let $E_1=E_3$ and $E_2=E_4$.
Then we have two chains only, i.e., $q=2$
hence \eqref{eq:c_0 geq q}~holds.

{\bf 4.1.2.}
Let $E_1=E_3$, $E_2 \not=E_4$
and $E_1$ passes through a two-sided exceptional crossing.
We define the map $Z$ excluding the chains $E_2$ and $E_4$.
To $E_1$ we assign the two-sided exceptional crossing which $E_1$ passes through.
All other chains (if any) are regarded in accordance with the rule~\eqref{def:Z}.
In this case the injectivity of the map is obvious.
The situation when $E_1 \not=E_3$, $E_2 =E_4$
and $E_2$ passes through a two-sided exceptional crossing
is completely analogous.

{\bf 4.1.3.}
Let $E_1 =E_3$, $E_2 \not=E_4$
 and $E_1$ do not passes through a two-sided exceptional crossing.
Since $E_1$ connects $u$ and $v$
the chain contains all left border edges of $F$.
Hence there is no two-sided exceptional crossings in $F$
(otherwise $E_1$ contains two left border edges adjacent to the crossings
and thus passes it through).
Consequently the chain $E_4$ satisfies the conditions of Lemma~\ref{Lemma:EndedChain}
and we can assign to $E_4$ the regular crossing which  the chain passes through.
All other chains (if any) can be regarded in accordance with the rule~\eqref{def:Z}.
The injectivity of the resulting map is obvious.
The situation $E_1 \not=E_3$, $E_2 =E_4$
and $E_2$ do not passes through a two-sided exceptional crossing
is completely analogous.

{\bf 4.1.4.}
Let $E_1 \not=E_3$, $E_2 \not=E_4$.
This case is like to the cases $2$ and $3$.
Now we exclude the chains $E_2$ and $E_3$.
For the chains $E_1$ and $E_4$ the map $Z$ is defined by the same way
as for the chain $E_1$ in the case 2.2.
To prove the injectivity of the correspondence it is sufficient to estimate the number
of two-sided exceptional crossings.
Now the number is greater than or equal to
$\max(2l+1,2r+1)$
hence an injectivity can be established.

{\bf 4.2. Both $u$ and $v$ are right-sided crossings of the type~$2$.}
(The case when both these crossings are left-sided are completely analogous.)

{\bf 4.2.1.}
Let $E_2=E_4$ and the chain do not pass through neither two-sided exceptional nor regular crossing.
In this case the union of $E_2$ with outer edges is a path
which intersects the rest part of $F$ transversely.
This contradicting the definition of \fkd, because
$F$ is a generic immersion not of a segment but  a disconnected $1$-manifold.

{\bf 4.2.2.}
Let $E_2=E_4$ and the chain satisfies
at least one of conditions:
\begin{enumerate}
\item It passes through a regular crossing;
\item It passes through $2$ or more two-sided exceptional crossings.
\end{enumerate}
The map $Z$ is define as follows.
The chains $E_1$ and $E_3$ are excluded.
To $E_2$ we assign either the regular crossing (if the first condition holds)
or a two-sided exceptional crossing (otherwise).
All other chains (if any) are regarded in accordance with the rule~\eqref{def:Z}.
The injectivity of the resulting map is obvious.

Therefore, in the case $E_2=E_4$ it remains
to consider the only situation
when $E_2$ do not pass through a regular crossing and 
it passes through exactly $1$ two-sided exceptional crossing.
This will be done in the case 4.2.4 below.

{\bf 4.2.3.}
Let $E_2 \not=E_4$ and
each of these chains satisfies at least one of following conditions:
\begin{enumerate}
\item The chain passes through a regular crossing;
\item The chain passes through $2$ or more two-sided exceptional crossings;
\item The chain do not pass through a two-sided exceptional crossing.
\end{enumerate}
Note if the last condition holds,
than the chain satisfies the conditions of Lemma~\ref{Lemma:EndedChain}
and thus it passes through a regular crossing.
The map $Z$ is defined as follows.
The chains $E_1$ and $E_3$ are excluded.
To the chain $E_j, j \in \{2,4\},$ we assign
either the regular crossing (if the first condition holds)
or a two-sided exceptional crossing (otherwise).
All other chains (if any) are regarded in accordance with the rule~\eqref{def:Z}.
The injectivity of the map follows from the fact
that the number of two-sided exceptional crossings in $F$
is greater than or equal to 
$\max(2l,2r)$,
where $l$ and $r$ are the numbers of left and right chains which pass through more than $1$ two-sided exceptional crossing.

{\bf 4.2.4.}
Let exactly one of the chains $E_2,E_4$ (say $E_2$) do not satisfies to all three conditions
listed in the previous case,
while the other chains either coincides with the first
or satisfies at least one of these conditions.
Then $E_2$ passes through exactly $1$ two-sided exceptional crossing
and do not pass through a regular crossing.
The map $Z$ is defined as in the previous case
except that to $E_2$ we assign the two-sided exceptional crossing which the chains passes through.
The injectivity of the map follows from the fact
that in the case under consideration the number of two-sided exceptional crossings is greater than of equal to
$\max(2l,2r+1)$,
where the term $1$ in the expression $2r+1$ corresponds to the two-sided exceptional crossing which $E_2$ passes through.

{\bf 4.2.5.}
Let $E_2 \not=E_4$ and
each of  these chains passes through exactly $1$ two-sided exceptional crossing
and do not pass through a regular crossing.
In this case we will replace the shortcut $\gamma$ with a new shortcut $\gamma'$
which is also minimal and gives a situation
satisfying the conditions of the case 4.1.
The shortcut is defined as follows.

Denote by $x$ the two-sided crossing of the type~$0$ which $E_2$ passes through
and by $\Delta_l$ and $\Delta_r$ the left and the right $\gamma$-regions adjacent to the crossing $x$, respectively.
By Lemma~\ref{Lemma:DistanceOneSide}
$\rho(\Delta_l,\Delta_r)=1$.
Since, by hypothesis, the chain $E_2$ is right border chain
and it does not pass through a regular crossing of the type~$0$,
$\rho(\Delta_0,\Delta_l)=k$, 
where $k$ is the number of edges in $E_2$ between the crossings $u$ and $x$.
Thus $\rho(\Delta_0,\Delta_r)$ is equal either to $k+1$ or to $k-1$.

Assume $\rho(\Delta_0,\Delta_r)=k-1$.
Let the chain $E_2$ consists of border edges $\{e_1,\ldots,e_n\},n \geq k+1,$
and $y$ is the end of the chain.
Since $y$ is the end of the chain $y \in \mathcal{C}_0 \cup \mathcal{C}_2$.
If $y$ is a crossing of the type~$2$,
then three $\gamma$-regions adjacent to $y$ are
$\Delta_{n-1},\Delta_n,\Delta_{n+1}$.
Here we use the canonical numbering of $\gamma$-regions (see Remark~\ref{rem:numbering}).
If $y$ is a two-sided exceptional crossing,
then  $y$ is adjacent to exactly two $\gamma$-regions $\Delta_{n-1}$ and $\Delta_{n+1}$.
The chain $\{e_{k+1},\ldots,e_n\}$ do not pass through a regular crossing of the type~$0$.
Hence there exists an arc going from $\Delta_r$ to $\Delta_{n+1}$
which intersects $F$  $n-k+1$ times.
Joining  the arc with initial part of $\gamma$ we obtain an arc
going from $\Delta_0$ to $\Delta_{n+1}$
which intersects $F$ in $k-1+n-k+1=n$ points.
This contradicts the minimality of $\gamma$.

Therefore, $\Delta_r =\Delta_{k+1}$.
To obtain a new shortcut $\gamma'$ we replace
the initial part of $\gamma$ connecting $u$ with an internal point of the region $\Delta_{k+1}$
with another arc starting at $u$ then going parallel $\{e_1,\ldots,e_k\}$ on the right
and ending at the same internal point of $\Delta_{k+1}$.
Since $\{e_1,\ldots,e_k\}$ do not pass through a regular point of the type~$0$
the new arc intersects with $F$ in $k+1$ points.
Hence $\gamma'$ and $\gamma$ have the same number of intersections with $F$
thus $\gamma'$ is minimal also.
At the same time with respect to $\gamma'$ the crossing $u$ is the left-sided crossing of the type~$2$,
i.e., we obtain the situation satisfying  the conditions of the case 4.1.

The proof of inequality~\eqref{eq:c_0 geq q}
is complete. In the case~4.2.5
it is necessary to modify the shortcut.
In all other cases the inequality holds for an arbitrary minimal shortcut.

It remains to show  that
\begin{equation} \label{eq:q geq c_2}
q \geq c_2(F,\gamma).
\end{equation}
If so the inequality~\eqref{eq:c_0 geq q}
implies
$$c_0(F, \gamma) + 2 \geq c_2(F, \gamma)$$
and the inequality~\eqref{eq:crnF geq2hF}
holds by Theorem~\ref{Theorem:Equivalence}.

To see~\eqref{eq:q geq c_2}
recall that $q$ is the number of parts into which the elements of the set
$\mathcal{C}_0 \cup \mathcal{C}_2$ divide the closed path $P_{\gamma}$.
Thus $q=|\mathcal{C}_0 \cup \mathcal{C}_2|$
which (since these two sets are disjoint) is equal to
$|\mathcal{C}_0| +|\mathcal{C}_2|$.
The set of crossings of the type~$2$ can be decomposed into two disjoint subsets
$C'_2 \cup C''_2$.
The first subset consists of such a crossings which are not framed by a one-sided exceptional crossing,
hence, by definition, $C'_2 =\mathcal{C}_2$.
The second subset consists of such a crossings of the type~$2$ which are framed by a one-sided exceptional crossing
and thus by a maximal one-sided exceptional crossing.
By Lemma~\ref{Lemma:InnerOneSideVertices}
a one-sided exceptional crossing can not frame more than $1$ crossing of the type~$2$. Hence
$|C''_2| \leq |\mathcal{C}_0|$.
Therefore
$$q=|\mathcal{C}_0| +|\mathcal{C}_2| \geq |C'_2| +|C''_2| =c_2(F,\gamma).$$
This completes the proof of Theorem~\ref{Theorem:VertexTypes}.
\end{proof}

\subsection{The lower bound for  an arbitrary flat knotoid diagram}
\label{sec:ProofForArbitrary}

\begin{theorem}
\label{Theorem:NonPrimeFlatKnotoids}
The inequality~\eqref{eq:crnF geq2hF}
holds for an arbitrary \fkd~$F$.
\end{theorem}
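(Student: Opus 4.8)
The plan is to argue by strong induction on the number of crossings $\crn(F)$. If $F$ is prime --- in particular if $\crn(F)=0$, i.e. $F$ is trivial, when $\hh(F)=0$ and there is nothing to prove --- then~\eqref{eq:crnF geq2hF} is exactly Theorem~\ref{Theorem:VertexTypes}. So I may assume that $F$ is not prime, that $\crn(F)\geq 1$, and that~\eqref{eq:crnF geq2hF} is already known for every \fkd~with fewer crossings.

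Since $F$ is not prime, one of the conditions~(i),~(ii) from the definition of a prime \fkd~fails, which produces an embedded circle $C$ meeting $F$ transversely and avoiding its crossings, either in exactly two points $p,p'$ (when~(i) fails) or in exactly one point $p$ (when~(ii) fails), and such that each of the two closed disks $\overline{D_1},\overline{D_2}$ cut off by $C$ contains at least one crossing of $F$. I would cut $F$ along $C$ into two \fkd s $F_1,F_2$: for $j=1,2$ keep the arcs of $F\cap\overline{D_j}$ and reconnect their free ends to fresh endpoints by pairwise disjoint simple arcs running close to $C$ inside $\overline{D_{3-j}}$, so that the new endpoints sit next to $p$ (and, when it exists, $p'$) and on a prescribed side of $F$. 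Because $C$ misses the crossings, every crossing of $F$ lies in exactly one $D_j$, and the reconnecting arcs carry no crossings; hence $\crn(F)=\crn(F_1)+\crn(F_2)$ and, each $\overline{D_j}$ containing a crossing, $\crn(F_j)<\crn(F)$. Thus the inductive hypothesis applies to $F_1$ and to $F_2$.

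The crux is then the height estimate $\hh(F)\leq\hh(F_1)+\hh(F_2)$. The first step I would prove is that a minimal shortcut of $F$ may be chosen to meet $C$ in the least conceivable number of points: exactly one point in the one-point case, and zero or two points (the latter separating $p$ from $p'$ along $C$) in the two-point case. This is an innermost-subarc argument: a shortcut meeting $C$ in more points than this would contain a subarc, lying in one of the disks and with both endpoints in a single component of $C\setminus(C\cap F)$, that can be pushed across $C$ without meeting $F$, lowering either the number of intersections with $F$ or (as a secondary minimization) the number of intersections with $C$. With such a shortcut fixed, its portion in $\overline{D_1}$ and its portion in $\overline{D_2}$ become, after discarding intersections with the reconnecting arcs (which hug $C$, so this is harmless), shortcuts of $F_1$ and of $F_2$; conversely, splicing minimal shortcuts of $F_1$ and $F_2$ along $C$ yields a shortcut of $F$. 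Comparing the intersection counts gives, in the one-point case, the sharper equality $\hh(F)=\hh(F_1)+\hh(F_2)$, and in the two-point case the inequality $\hh(F)\leq\hh(F_1)+\hh(F_2)$ --- including the degenerate subcase in which the chosen minimal shortcut avoids $D_1$ altogether and one simply reads off $\hh(F)=\hh(F_2)\leq\hh(F_1)+\hh(F_2)$.

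Putting the pieces together, $\crn(F)=\crn(F_1)+\crn(F_2)\geq 2\hh(F_1)+2\hh(F_2)\geq 2\hh(F)$ by the inductive hypothesis and the height estimate, which closes the induction. I expect the height estimate to be the main obstacle: the innermost-subarc pushing has to be handled with care in the two-point case, where $C$ can genuinely be forced to be crossed twice, and one must be attentive to the placement of the new endpoints of $F_1,F_2$ next to $p$ (and $p'$) so that the portions of a minimal shortcut of $F$ really are shortcuts of $F_1,F_2$ with no spurious extra intersections, and so that the splicing in the reverse direction does not introduce any.
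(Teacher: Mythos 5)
Your strategy is the paper's: induct on $\crn(F)$, dispose of prime diagrams by Theorem~\ref{Theorem:VertexTypes}, and in the non-prime case use the circle $C$ furnished by the failure of condition (i) or (ii) to pass to diagrams with strictly fewer crossings, combining additivity of the crossing number with (sub)additivity of the height; the one-point case is handled exactly as in the paper. The only step that fails as literally written is the construction of $F_1$ in the two-point case: there $F\cap\overline{D_1}$ (with $D_1$ the disk containing both endpoints of $F$, as it must since $F$ crosses $C$ an even number of times) consists of \emph{two} arcs, so capping each free end at $p$ and $p'$ with a fresh endpoint yields an object with four endpoints, which is not a \fkd, and the inductive hypothesis cannot be applied to it. The repair is to join those two free ends \emph{to each other} by an arc hugging $C$ inside $D_2$ --- equivalently, to contract $D_2$ to a point, which is exactly the paper's move --- producing a single diagram $F'$ with $\crn(F')<\crn(F)$ and $\hh(F')=\hh(F)$; the piece $F\cap\overline{D_2}$ then serves only to witness that $D_2$ contains a crossing, and its height is never used. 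Under this reading your estimate $\hh(F)\le\hh(F_1)+\hh(F_2)$ follows from the stronger $\hh(F)\le\hh(F_1)$, justified by precisely the innermost-subarc/parity argument you sketch for rerouting a shortcut near $C$ (note also that a shortcut of $F$ can never avoid $D_1$, since both endpoints of $F$ lie there; the degenerate subcase you mention should refer to avoiding $D_2$). With that correction the induction closes as you describe.
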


\begin{proof}
We proceed by induction on $\crn(F)$.

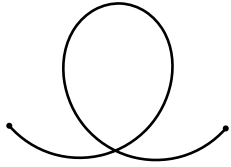
\begin{figure}[h]
	\begin{center}
		\begin{tikzpicture}[scale=0.2, yscale=-1.0]
			\path[draw=black,line cap=round,line width=\figuresLineThickness] (2.2629,10.0584) .. controls (3.0193,10.9302) and (4.0246,11.5836) .. (5.1286,11.9207) .. controls (6.2325,12.2578) and (7.4314,12.2775) .. (8.5458,11.9769) .. controls (9.6602,11.6763) and (10.6865,11.0564) .. (11.4712,10.2099) .. controls (12.2558,9.3634) and (12.7962,8.2930) .. (13.0116,7.1590) .. controls (13.2301,6.0084) and (13.1039,4.7691) .. (12.5187,3.7546) .. controls (12.2260,3.2473) and (11.8219,2.8022) .. (11.3337,2.4788) .. controls (10.8454,2.1554) and (10.2728,1.9560) .. (9.6880,1.9261) .. controls (9.0880,1.8955) and (8.4828,2.0438) .. (7.9546,2.3301) .. controls (7.4265,2.6165) and (6.9754,3.0383) .. (6.6367,3.5345) .. controls (5.9591,4.5267) and (5.7470,5.7911) .. (5.9047,6.9822) .. controls (6.0591,8.1488) and (6.5562,9.2680) .. (7.3181,10.1647) .. controls (8.0800,11.0615) and (9.1041,11.7328) .. (10.2304,12.0737) .. controls (11.3568,12.4145) and (12.5812,12.4237) .. (13.7126,12.0999) .. controls (14.8439,11.7761) and (15.8780,11.1203) .. (16.6534,10.2351);
			\path[fill=black] (2.2629,10.0584) circle (\figuresPointSize);
			\path[fill=black] (16.6534,10.2351) circle (\figuresPointSize);
		\end{tikzpicture}
		\caption{\label{Figure:FlatKnotoid1}The unique FKD with one crossing}
	\end{center}
\end{figure}

{\bf Let $\crn(F) = 1$.}
 There exists exactly one \fkd~$F$ for which
$\crn(F) = 1$
(see Fig.~\ref{Figure:FlatKnotoid1}). 
The height of the \fkd~is equal to $0$,
hence~\eqref{eq:crnF geq2hF}
holds.

{\bf Let $\crn(F) >1$.}
If $F$ is prime then \eqref{eq:crnF geq2hF}~holds
by Theorem~\ref{Theorem:VertexTypes}.

If $F$ is not prime, 
then at least one of conditions (i),(ii) in the definition of a prime \fkd\  
(see Section~\ref{sec:Prime})
does not hold,
i.e., there is an embedded circle $C$
which intersects  $F$ in $1$ or $2$ points
and such that each disk bounded by $C$ contains at least one crossing.

\input{NonPrimeReduction0102}

If the circle $C$ intersects $F$ in exactly $2$ points
(i.e., the condition~(i) does not hold),
then one of disks bounded by $C$ (denote it by $D$) does not contains the endpoints of $F$.
Consider the \fkd~$F'$ obtaining from $F$
by contracting the disk $D$ into a point
or, equivalently, by replacing the fragment inside $D$ with a simple arc connecting the same points in $\partial D$
(see Fig.~\ref{Figure:NonPrimeReduction0102} on the left).
Then we have (since there is a crossing inside $D$ and a minimal shortcut can be pushed outside $D$)
$$\crn(F) > \crn(F'), \quad \hh(F) = \hh(F').$$
By induction assumption
$\crn(F')\geq 2 \hh(F')$,
 hence
$$\crn(F) > \crn(F') \geq 2 \hh(F') = 2 \hh(F).$$

If the circle $C$ intersects $F$ in exactly $1$ points
(i.e., the condition(ii) does not hold),
then $C$ cuts $F$ into two non-trivial \fkd~$F_1$ and $F_2$
which lie inside different disks bounded by $C$.
More precisely, the \fkd~$F_i,i=1,2,$ is obtained as a result of a contracting into a point
the disk $D_i$ where $D_1,D_2$ are the disks into which the circle $C$ cuts the sphere $S^2$
(see Fig.~\ref{Figure:NonPrimeReduction0102} on the right).
Then
\begin{center}
$\crn(F) = \crn(F_1) + \crn(F_2)$,

$\crn(F) > \crn(F_i)$, $i = 1, 2$,

$\hh(F) = \hh(F_1) + \hh(F_2)$.
\end{center}
By induction assumption
$\crn(F_i) \geq 2 \hh(F_i),i=1,2$, 
hence
$$\crn(F) =
\crn(F_1) + \crn(F_2) \geq 2 \hh(F_1) + 2 \hh(F_2) = 2 \hh(F).$$
\end{proof}

\subsection{The proof of the Theorem \ref{theorem:MainResult} for an arbitrary knotoid}
\label{sec:ProofForKnotoid}

Given a knotoid $K$
and its minimal diagram $D$, i.e., $\crn(D) =\crn(K)$.
Consider a \fkd~$F$ which is obtained from $D$ as a result of forgetting over/under-crossing information in all crossings of $D$.
By Theorem~\ref{Theorem:NonPrimeFlatKnotoids}:
$$\crn(F) \geq 2 \hh(F).$$
Since, by definition, the height of a knotoid is the minimum of the height over all representative diagram
$\hh(F)\geq \hh(K)$,
hence
$$\crn(K) = \crn(F) \geq 2 \hh(F) \geq 2 \hh(K).$$
This completes the proof of Theorem~\ref{theorem:MainResult}.

\section{Proof of~Corollary~\ref{corol:Bridge}}
\label{sec:ProofOfCorollary}

\begin{figure}[h]
	\begin{center}
		\begin{tikzpicture}[scale=0.3, yscale=-1.0]
			\path[draw=black,line cap=round, line width=\figuresLineThickness] (1.2251,8.9269) .. controls (1.5924,8.7379) and (2.0240,8.5673) .. (2.5082,8.4146);
			\path[draw=black,->,line cap=round,line width=\figuresThickLineThickness] (3.2657,8.2006) .. controls (6.0101,7.5056) and (9.9936,7.2765) .. (13.6314,7.4355);
			\path[draw=black,line cap=round,line width=\figuresThickLineThickness] (13.6314,7.4355) .. controls (		15.8699,7.5334) and (17.9774,7.7782) .. (19.5847,8.1520);
			\path[draw=black,line cap=round,line width=\figuresLineThickness] (20.4559,8.3824) .. controls (20.9967,8.5455) and (21.4522,8.7274) .. (21.8014,8.9269);
			\path[draw=black,line cap=round,line width=\figuresLineThickness] (1.0751,5.6517) .. controls (3.0502,7.0018) and (3.3752,9.9520) .. (3.1502,11.5271);
			\path[draw=black,line cap=round,line width=\figuresLineThickness] (21.8014,5.6517) .. controls (19.8262,7.0018) and (19.5012,9.9520) .. (19.7262,11.5271);
			\path[draw=black,line cap=round, line width=\figuresLineThickness] (11.7007,3.1766) -- (11.6797,6.7348);
			\path[draw=black,line cap=round,line width=\figuresLineThickness] (11.6729,7.8927) -- (11.6507,11.6521);
			\path[draw=black,line cap=round,line width=\figuresLineThickness] (5.0003,3.5266) .. controls (5.8650,4.4334) and (6.4094,5.6961) .. (6.7481,7.0144);
			\path[draw=black,line cap=round,line width=\figuresLineThickness] (6.9824,8.1039) .. controls (7.2397,9.5590) and (7.2870,10.9822) .. (7.2755,11.9771);
			\path[draw=black,line cap=round,line width=\figuresLineThickness] (17.9261,3.5266) .. controls (17.0398,4.4562) and (16.4899,5.7596) .. (16.1533,7.1136);
			\path[draw=black,line cap=round,line width=\figuresLineThickness] (15.9465,8.0898) .. controls (15.6870,9.5498) and (15.6394,10.9790) .. (15.6510,11.9771);
			\path[fill=black] (3.9640,8.0392) circle (\figuresPointSize) node[below] {$v$};
			\path[fill=black] (18.7964,7.9863) circle (\figuresPointSize) node[below] {$u$};
		\end{tikzpicture}
		\caption{\label{Figure:Bridge}The bridge $B$ starts at $v$ and ends at $u$}
	\end{center}
\end{figure}
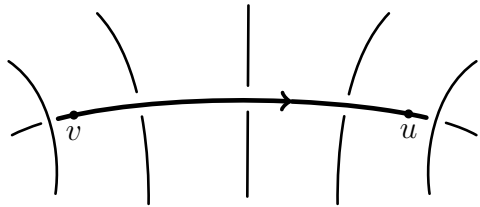

Let $B$ be a bridge of the length $k(D)$.
Pick two points in the diagram:
$v$ placing just before the first crossing in the bridge $B$
and $u$ placing just after the last crossing in $B$ (see Figure \ref{Figure:Bridge}).
Without loss of generality we can think
that the bridge $B$ starts at $v$ and ends at $u$.
Then $B$ is a simple arc passing through $k(D)$ crossings of the diagram.
Observe that we can regard $D \setminus B$
as a knotoid diagram starting at $u$ and ending at $v$.
Denote by $F$ corresponding \fkd,
i.e., the result of forgetting of over/under-data in the diagram.
The number of crossings $F$ is equal to $\crn(D) -k(D)$.
The arc $[u,v]$ (we mean $B$ with reversed orientation)
is a shortcut of $F$ intersecting $F$ in $k(D)$ points.
By hypothesis, the diagram $D$ is minimal,
hence the shortcut is minimal also.
Hence $\hh(F) =k(D)$.
Using~Theorem~\ref{Theorem:NonPrimeFlatKnotoids}
and two equalities above we have
$$\crn(F) =\crn(D) -k(D) \geq 2 \hh(F) =2 k(D)$$
thus $\crn(D) \geq 3 k(D)$.
This completes the proof.

\section*{Acknowledgments}

The work was supported by RFBR (grant number 20-01-00127).


\begin{thebibliography}{0}

\bibitem{Bartholomew}
A. Bartholomew,
\text{http://www.layer8.co.uk/maths/knotoids/index.htm}, aug. 31 2020.


\bibitem{SeifertGenus}
Kenji Daikoku, Keiichi Sakai and Masamichi Takase,
On a move reducing the genus of a knot diagram,
{\it Indiana University Mathematics Journal},
{\bf 61}(3) (2012) 1111--1127.

\bibitem{KauffmanArrow}
H.A. Dye and L.H. Kauffman,
Virtual Crossing Number and the Arrow Polynomial,
{\it Journal of Knot Theory and Its Ramifications}
{\bf 18}(10) (2009) 1335--1357.

\bibitem{GDS}
D. Goundaroulis, J. Dorier, and A. Stasiak,
A systematic classification of knotoids on the plane and on the sphere,
\text{https://arxiv.org/abs/1902.07277v2}{arXiv:1902.07277},
(2019).

\bibitem{KauffmanInvariants}
Neslihan Gugumcu and Louis H. Kauffman,
New invariants of knotoids,
{\it European Journal of Combinatorics},
{\bf 65} (2017) 186--229.

\bibitem{GKL}
N. Gugumcu, L.H. Kauffman and S. Lambropoulou,
A survey on knotoids, braidoids and their applications,
In{\it Knots, Low-Dimensional Topology and Applications}.
{\it Springer Proceedings in Mathematics \& Statistics},
(284) (2019) 389--409.

\bibitem{KauffmanExtended}
L.H. Kauffman,
An Extended Bracket Polynomial for Virtual Knots and Links,
{\it Journal of Knot Theory and Its Ramifications},
{\bf 18}(10) (2009) 1369--1422.

\bibitem{KauffmanAffine}
L.H. Kauffman,
An affine index polynomial invariant of virtual knots,
{\it Journal of Knot Theory and Its Ramifications},
{\bf 22}(4) (2013) 30 pp.

\bibitem{KorablevMay}
Ph.G. Korablev and Ya.K. May,
Knotoids and knots in the thickened torus,
{\it Siberian Mathematical Journal},
{\bf 58}(5) (2017) 837--844.	

\bibitem{KorablevMayTarkaev}
Ph.G. Korablev, Y.K. May, V.V. Tarkaev, 
Classification of low complexity knotoids,
{\it Siberian Electronic Mathematical Reports},
{\bf 15} (2018) 1237--1244.

\bibitem{Kutluay}
D. Kutluay,
Winding homology of knotoids,
\text{https://arxiv.org/abs/2002.07871}{arXiv:2002.07871} (2020).

\bibitem{TuraevVirtualStrings}
V. Turaev,
Virtual strings,
{\it Annales de l'Institut Fourier},
{\bf 54}(7) (2004) 2455--2525 (2005). 
  
\bibitem{TuraevKnotoids}
V. Turaev,
Knotoids,
{\it Osaka Journal of Mathematics},
{\bf 49} (2012) 195--223.

\end{thebibliography}
\end{document}